\colorlet{midgray}{lightgray!50!gray}
\newcommand{\note}[1]{\todo[color=lightgray]{#1}}
\newcommand{\RR}{\mathbb{R}}    
\newcommand{\T}{\mathcal{T}}
\newcommand{\Sph}{\mathbb{S}}   
\def\^#1{^{(#1)}}
\def\s^#1{^{\smash{(#1)}}}
\def\:{\colon}
\newcommand{\cupdot}{\mathbin{\mathaccent\cdot\cup}}
\newcommand{\labelstyle}[1]{\upshape(\textit{#1})}
\newcommand{\mylabel}{\labelstyle{\roman*}}
\newenvironment{myenumerate}
    {\begin{enumerate}[label=\mylabel]}
    {\end{enumerate}}
\def\itm#1{\labelstyle{\romannumeral#1\relax}}
\def\hitm#1{\hyperref[itm:#1]{\itm{#1}}}
\def\itmto#1#2{{\itm#1 $\Longrightarrow$ \itm#2}}
\newcommand{\myitem}[1]{%
\item[#1]\protected@edef\@currentlabel{#1}%
}
\newcommand{\freespace}{\kern.07em} 
\newcommand{\free}{\freespace\cdot\freespace} 
\newcommand{\TODO}{{\footnotesize\textcolor{red}{TODO}}}
\colorlet{xxxcolor}{lightgray}
\colorlet{mcolor}{red}
\colorlet{acolor}{blue}
\colorlet{tcolor}{green}
\newcommand{\msays}[1]{{\footnotesize\textcolor{mcolor}{\textbf M: #1}}}
\newenvironment{mnote}
{
    \begingroup
    \footnotesize\color{mcolor} \textbf M:%
}
{ 
    \endgroup
}
\theoremstyle{plain}  
\newtheorem{theorem}{Theorem}[section]
\newtheorem{corollary}[theorem]{Corollary}
\newtheorem{lemma}[theorem]{Lemma}
\newtheorem{proposition}[theorem]{Proposition}
\newtheorem{conjecture}[theorem]{Conjecture}
\newtheorem{problem}[theorem]{Problem}
\newtheorem{question}[theorem]{Question}
\newtheorem*{theorem*}{Theorem}
\theoremstyle{definition} 
\newtheorem{definition}[theorem]{Definition}
\newtheorem{example}[theorem]{Example}
\newtheorem{remark}[theorem]{Remark}
\newtheorem{observation}[theorem]{Observation}
\crefname{theorem}{Theorem}{Theorems}
\crefname{proposition}{Proposition}{Propositions}
\crefname{lemma}{Lemma}{Lemmas}
\crefname{corollary}{Corollary}{Corollaries}
\crefname{remark}{Remark}{Remarks}
\crefname{example}{Example}{Examples}
\crefname{definition}{Definition}{Definitions}
\crefname{problem}{Problem}{Problems}
\crefname{observation}{Observation}{Observation}
\crefname{construction}{Construction}{Construction}
\theoremstyle{theorem}
\providecommand{\customgenericname}{}
\newcommand{\newcustomtheorem}[2]{%
  \newenvironment{#1}[1]
  {%
   \renewcommand\customgenericname{#2}%
   \renewcommand\theinnercustomgeneric{##1}%
   \innercustomgeneric
  }
  {\endinnercustomgeneric}
}
\DeclareMathOperator{\id}{id}
\DeclareMathOperator{\dist}{dist}
\DeclareMathOperator{\Int}{int}  	
\DeclareMathOperator{\cl}{cl}
\let\eset=\varnothing
\def\...{...}
\newcommand{\shortStyle}{\textit}
\newcommand{\ie}{\shortStyle{i.e.,}}
\newcommand{\eg}{\shortStyle{e.g.}}
\newcommand{\wrt}{\shortStyle{w.r.t.}}
\newcommand{\cf}{\shortStyle{cf.}}
\newcommand{\resp}{resp.}
\def\nlspace{\nolinebreak\space}
\def\nspace{\nlspace}
\def\nls{\nlspace}
\renewcommand*{\eqref}[1]{%
  \hyperref[{#1}]{\textup{\tagform@{\ref*{#1}}}}%
}
\numberwithin{equation}{section}
\newcommand{\addresseshere}{%
  \enddoc@text\let\enddoc@text\relax
}
\newcommand{\tempnewpage}{}
\newcommand{\Tam}{T. T$\hat{\mathrm{a}}$m Nguy$\tilde{\hat{\mathrm{e}}}$n-Phan}
\newcommand{\cpx}{X}
\newcommand{\FKT}{\ensuremath{X_{\mathrm{FKT}}}}
\newcommand{\skel}[1]{\ensuremath{G_{#1}}}
\newcommand{\PLsets}{\PL-sets}
\newcommand{\Creg}{\ensuremath{X_{\mathrm{reg}}}}
\newcommand{\Xany}{X}
\newcommand{\Xfull}{\ensuremath{\Xany_{\mathrm{full}}}}
\newcommand{\Xreg}{\ensuremath{\Xany_{\mathrm{reg}}}}
\newcommand{\Xind}{\ensuremath{\Xany_{\mathrm{ind}}}}
\newcommand{\embd}[1]{\ensuremath{{#1}^\phi}}
\newcommand{\Y}{\ensuremath{\mathrm{Y}}}
\newcommand{\PL}{\ensuremath{\mathrm{PL}}}
\newcommand{\DY}{\ensuremath{\Delta\kern-1.2pt \Y}}
\newcommand{\YD}{\ensuremath{\Y\kern-1.2pt\Delta}}
\newcommand{\trafo}{trans\-for\-ma\-tion}
\newcommand{\trafos}{trans\-for\-ma\-tions}
\newcommand{\DYtrafo}{\DY-\trafo}
\newcommand{\DYtrafos}{\DY-\trafos}
\newcommand{\YDtrafo}{\YD-\trafo}
\newcommand{\YDtrafos}{\YD-\trafos}
\newcommand{\vdH}{van~der~Holst}
\newcommand{\VdH}{Van~der~Holst}
\newcommand{\cdV}{Colin de Verdi\`ere}
\newcommand{\defi}[1]{{\color{darkgray}\emph{#1}}}
\newcommand{\exm}[1]{\mathrm{Ex}(#1)}
\newcommand{\comment}[1]{}
\newcommand{\RRf}{\ensuremath{\RR^4}}
\renewcommand{\medskip}{}
\def\circsymb{\circ}
\def\disksymb{\bullet}
\def\starsymb{\star}
\def\wheelsymb{{\star\kern1pt\mathllap\ocircle}}
\def\Xcirc{X_\circsymb}
\def\Xdisk{X_\disksymb}
\def\Xstar{X_\starsymb}
\def\Gcirc{G_\circsymb}
\def\Gstar{G_\starsymb}
\begin{document}


\title
[On 2-complexes embeddable in 4-space and their underlying graphs]
{On 2-complexes embeddable in 4-space, and the excluded minors of their underlying graphs}

\author[A. Georgakopoulos]{Agelos Georgakopoulos{${}^\dagger$}}
\thanks{{${}^\dagger$}Supported by EPSRC grants EP/V009044/1, EP/V048821/1, and EP/Y004302/1.}
\author[M. Winter]{Martin Winter{${}^\ddagger$}}
\thanks{{${}^\ddagger$}Supported by EPSRC grant EP/V009044/1.}

\thanks{Mathematics Institute, University of Warwick, Coventry CV4 7AL, United Kingdom}

		
\subjclass[2010]{57Q35, 05C10, 05C83, 05C75} 
\keywords{CW complex, embedding problem, excluded minors, 4-flat graphs}




\maketitle


\date{\today}

\begin{abstract}

    We study the potentially undecidable problem of whether a given 2-dimensional CW complex can be embedded into $\mathbb{R}^4$.
    We provide operations that preserve embeddability, including joining and cloning of 2-cells, as well as \DYtrafos.
    We also construct a CW complex for which \YDtrafos\ do not preserve embeddability. 

    We use these results to study 4-flat graphs, \emph{i.e.}, graphs that embed in $\mathbb{R}^4$ after attaching any number of 2-cells to their cycles; a graph class that naturally generalizes planarity and linklessness. We verify several conjectures of van der Holst; in particular, we prove that each of the 78 graphs of the Heawood family is an excluded minor for the family of 4-flat graphs.    
\end{abstract}


\section{Introduction}
\label{sec:introduction}

\iftrue 

The study of planar graphs marks the starting point of both topological graph theory and graph minor theory.
%
A variety of concepts have been introduced with the goal of capturing a 
higher-dimension analogue of planarity.
Linkless graphs, for example, are defined by the existence of particular embeddings into 3-space, and can likewise be characterized by a short list of excluded minors~\cite{RoSeThoSac}. 
In 2006 van~der~Holst introduced a natural 4-dimensional analogue \cite{van2006graphs}: a graph $G$ is \mbox{\emph{4-flat}} if every 2-dimensional regular CW complex having $G$ as its 1-skeleton can be (piecewise linearly) embedded into $\RR^4$.
%
%
Van der Holst's work contains a number of results and plausible conjectures that paint the picture of 4-flat graphs as the most natural continuation from planar and linkless to 4-space. 
In particular, 4-flat graphs encompass planar and linkless graphs and appear to have the right behaviour in terms of forbidden minors and the Colin de Verdiére graph invariant. 
They moreover allow natural generalisations to even higher dimensions. 

The study of 4-flat graphs is intimately connected with the embedding problem $2\to 4$, \ie\ 
deciding the embeddability of  a 2-dimensional CW complex into $\RR^4$. 
A fundamental homological obstruction to embeddability $n\to 2n$, the van Kampen obstruction, 
fails to be sufficient precisely for $n=2$ \cite{freedman1994van}, and it is a prominent open question whether the decision problem $2\to 4$ is  decidable \cite{matouvsek2010hardness}.
As a consequence, seemingly simple questions about 4-flat graphs can be hard, since there is currently no way to decide whether a given graph is 4-flat. 

\medskip
In this paper we provide operations on 2-dimensional CW complexes that preserve embeddability into $\RR^4$, and use them to make a number of conclusions about 4-flat graphs.\nls We answer several conjectures of van der Holst regarding their excluded minors and on operations that preserve 4-flatness.
Moreover, our tools allow us to give streamlined proofs for some known results.


\subsection{Background on 4-flat graphs}

We now recall some results of van der Holst  \cite{van2006graphs}. 
The 4-flat graphs form a minor-closed graph family and are therefore, by~the Robertson-Seymour Graph Minor theorem \cite{GMXX}, characterized by a finite list~of \emph{excluded minors}, \ie\ graphs that are not 4-flat, but every proper minor of them~is. The complete list of excluded minors is unknown, though there is a plausible candidate: 
the \emph{Heawood family} consists of the 78 graphs built from $K_7$ and $K_{3,3,1,1}$ by repeated application of \DY- and \YDtrafos.
The members of the~Heawood family are collectively known as ``Heawood graphs'', owing to the fact that  \emph{the} Heawood graph (\cref{fig:Heawood_Graph}, right) is  a member of this family. 
Van der Holst showed that the Heawood graphs are not 4-flat 
and proposed:

\begin{figure}
    \centering
    \includegraphics[width=0.73\textwidth]{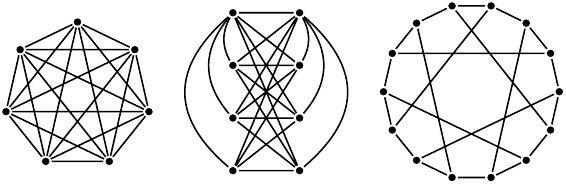}
    \caption{Three graphs from the Heawood family: $K_7$, $K_{3,3,1,1}$ and~\emph{the} Heawood graph. All three are not 4-flat.}
    \label{fig:Heawood_Graph}
\end{figure}

\begin{conjecture}[{\cite{van2006graphs}}]
    \label{vdH_Heawood}
    \label{conj:Heawood}
    Each of  the 78 graphs of the Heawood family is an excluded minor for the class of 4-flat graphs. Even more, there are no further excluded~minors.\footnote{The first statement of \cref{conj:Heawood} can be found at the end of \cite[\S 3]{van2006graphs}. The second statement is more implicit: it is conjectured that the graphs in the Heawood family are all excluded minors for the class of graphs with $\mu\leq 5$ \cite[\S 4]{van2006graphs}, and that a  graph has $\mu\leq 5$ if and only if it is 4-flat (\cite[Conjecture 3]{van2006graphs}). Combining the latter two conjectures yields the second statement.}
\end{conjecture}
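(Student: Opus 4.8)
The plan is to establish the first assertion of \cref{conj:Heawood} --- that each of the $78$ Heawood graphs is an excluded minor for the class of 4-flat graphs; the completeness of the list is out of reach of these methods and is commented on at the end. Since the Heawood graphs are not 4-flat by \cite{van2006graphs}, and 4-flatness is minor-closed, what must be shown is that each Heawood graph $H$ is \emph{minor-minimal} non-4-flat, equivalently that $H-e$ and $H/e$ are 4-flat for every $e\in E(H)$. I would first organise the family by its \DY/\YD-structure: every Heawood graph other than $K_7$ and $K_{3,3,1,1}$ has a degree-$3$ vertex $v$ whose three neighbours $a,b,c$ are pairwise non-adjacent (this is routine to verify from the construction of the family), and is therefore of the form $H=\DY_T(H_0)$ with $T=\{a,b,c\}$ and $H_0$ the smaller Heawood graph obtained by applying the \YDtrafo\ at $v$; here one uses only that the Heawood family is, by definition, closed under \DY- and \YDtrafos. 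It then suffices to prove (i) the \YD-irreducible members $K_7$ and $K_{3,3,1,1}$ are minor-minimal non-4-flat, and (ii) a \emph{propagation lemma}: if $H_0$ is minor-minimal non-4-flat and $H=\DY_T(H_0)$ is not 4-flat, then $H$ is minor-minimal non-4-flat. Induction on the number of vertices then covers all $78$ graphs.

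For the propagation lemma I would split the edges of $H$ by how they meet the new vertex $v$ and the triangle $T$. If $e$ is incident to $v$, say $e=va$, then $H-e$ is a subdivision of the minor $H_0-\{ab,ac\}$ of $H_0$ (smooth away the degree-$2$ vertex $v$), and $H/e\cong H_0-bc$; as 4-flatness is invariant under subdivision, both are 4-flat by minor-minimality of $H_0$. If $e$ lies in $E(H_0)$ and is disjoint from $T$, then deleting and contracting $e$ commute with the \DYtrafo, so $H-e=\DY_T(H_0-e)$ and $H/e=\DY_T(H_0/e)$ are \DY-images of proper minors of $H_0$, hence 4-flat since 4-flatness is preserved by \DYtrafos. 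The same commutation handles $H-e$, and $H/e$ in most configurations, when $e$ meets $T$ in a single vertex. This leaves only a short residual list of contractions.

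The main obstacle is that this residual list, together with the base cases (i), all reduce to the genuinely hard task of certifying a given graph to be 4-flat, i.e.\ that \emph{every} 2-complex on it embeds in $\RR^4$. For instance, contracting an edge $e=ad$ with $a\in T$ and $d$ adjacent in $H_0$ to another vertex of $T$ yields a graph that is a proper \emph{supergraph} of the expected \DY-image $\DY_T(H_0/e)$, so it is not visibly a minor of anything already known to be 4-flat --- performing a further \DYtrafo\ only certifies $\DY(H/e)$, which would require the unavailable converse for \YDtrafos. And the base cases demand that we exhibit a handful of small explicit graphs to be 4-flat, among them $K_6\cong K_7/e$, $K_7-e$, and the few isomorphism types among $K_{3,3,1,1}-e$ and $K_{3,3,1,1}/e$. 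For both, the intended engine is the repertoire of operations on 2-complexes developed earlier (detaching 2-cells from edges and vertices, cloning and joining 2-cells, doubling edges, and \DYtrafos): one uses them to normalise an arbitrary 2-complex on the graph in question to one whose embeddability in $\RR^4$ is manifest --- because its 1-skeleton becomes planar or linkless, or it carries very few 2-cells --- and, where possible, to avoid the residual contraction altogether by choosing the \DY-history of $H$ so that $e$ lies far from its triangle. Arranging that these normalisations terminate at manifestly embeddable complexes, uniformly over all graphs that occur, is where the bulk of the work lies.

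The second assertion of \cref{conj:Heawood}, that there are no further excluded minors, is not addressed by this plan: it amounts to a converse --- every non-4-flat graph has a Heawood minor --- and, in the absence of any algorithm deciding 4-flatness, I would expect it to remain open even granting all of the above operations.
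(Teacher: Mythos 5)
Your overall architecture --- a \DY-propagation lemma transferring minor-minimality from $H_0$ to $H=\DY_T(H_0)$, followed by a small number of base cases --- is exactly the paper's by-hand proof of \cref{res:all_Heawood_are_excluded}; your propagation lemma is \cref{res:DY_preserves_excluded}. However, the structural step you dismiss as ``routine to verify'' is false, and this is a genuine gap. The Heawood family is closed under \YD- as well as \DYtrafos, and a \YD-move can destroy the last degree-$3$ vertex of a graph: there are exactly \emph{seven} members of the family with minimum degree at least $4$ (hence with no degree-$3$ vertex at all, hence not expressible as $\DY_T(H_0)$ for any Heawood graph $H_0$), namely $K_7$, $K_{3,3,1,1}$, and five further graphs on $9$, $9$, $9$, $10$ and $11$ vertices (\cref{fig:remaining_Heawood}). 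Your induction never reaches these five, and verifying by hand that each of their proper minors is 4-flat is where the bulk of the paper's work lies. Relatedly, you are missing the concrete certification engine that makes those verifications feasible: by \cref{res:linkless_planar_outerplanar}~\itm2, $H$ is 4-flat as soon as $H-\{v,w\}$ is planar for some pair of vertices $v,w$, and this criterion happens to succeed for every proper minor of every Heawood graph (checked by computer for all $78$, and by hand for the five hard base cases). The cases $K_7$ and $K_{3,3,1,1}$ themselves are already in van der Holst's paper.

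Conversely, the ``residual list of contractions'' that you flag as the main obstacle in the propagation lemma is not actually an obstacle. If $e=ad$ with $a\in T$ and $d$ adjacent in $H_0$ to $b\in T$, then in the multigraph category (which the paper works in) $H/e$ coincides with $\DY_{T'}(H_0/e)$ on the nose: the extra edge from $[ad]$ to $b$ that you observe is the surviving parallel copy coming from $db$ after the triangle copy coming from $ab$ is removed by the \DYtrafo. Equivalently, in the simple-graph category $H/e=\DY_{T'}\bigl((H_0/e)+f\bigr)$ for a cloned edge $f$, and cloning edges preserves 4-flatness by \cref{res:cloning_edges_4_flat}, so this case closes with tools you already have. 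Thus the propagation lemma is clean and the only hard content is the seven base cases, not the two you allow for. Your assessment of the second assertion of \cref{conj:Heawood} is correct: the paper proves only the first assertion, and completeness of the list remains open.
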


This conjecture is natural, given the 
similarity 
of the Heawood family to 
the well-known Petersen family (that is, the family of excluded minors for linkless graphs): both are obtained from the Kuratowski graphs $K_5$ and $K_{3,3}$ by forming suspensions, and then \DY- and \YDtrafos.  

%
%
%
%
%

Van der Holst proved that both $K_7$ and $K_{3,3,1,1}$ are excluded minors, though~left this open for the other graphs of the Heawood family. We shall fill this gap with \cref{res:all_Heawood_are_excluded}, proved in~\cref{sec:Heawood_excluded_minor}.

Van der Holst proposed a different approach for proving this, based on two further conjectures:

\begin{conjecture}[{\cite[Conjecture 1]{van2006graphs}}]
    \label{vdH_doubling_edges}
    \label{conj:doubling_edges}
    \label{conj:cloning_edges}
    Cloning an edge of a graph (\ie\ replacing~it by two parallel edges) preserves 4-flatness.
\end{conjecture}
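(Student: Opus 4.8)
The plan is to strip a given $2$-complex on $G'$ down to one on $G$, embed the latter using $4$-flatness of $G$, and then rebuild. Fix a $4$-flat graph $G$ with an edge $e=uv$, and let $G'$ arise from $G$ by cloning $e$, so that $G'$ has two parallel edges $e_1,e_2$ between $u$ and $v$ in place of $e$. I must show that every regular $2$-complex $X'$ with $1$-skeleton $G'$ embeds PL in $\RR^4$. The key combinatorial remark is that the only cycle of $G'$ traversing both $e_1$ and $e_2$ is the bigon $e_1e_2$; hence each $2$-cell of $X'$ is either a \emph{bigon cell}, attached along $e_1e_2$, or an \emph{ordinary cell}, attached along a cycle using at most one of $e_1,e_2$.

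First I would form the regular $2$-complex $X$ with $1$-skeleton $G$ whose $2$-cells are exactly the ordinary cells of $X'$, each re-routed by replacing its occurrence of $e_1$ or $e_2$ by $e$. This is a legitimate regular CW complex on $G$ (parallel $2$-cells being permitted), since an embedded cycle of $G'$ that uses at most one of $e_1,e_2$ remains an embedded cycle of $G$ under this substitution. By $4$-flatness, $X$ embeds in $\RR^4$; fix such an embedding.

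Next I would apply the \emph{edge-doubling} move to the embedded edge $e$ of $X$: replace $e$ by two parallel edges $e_1,e_2$ spanning a new bigon $2$-cell $\hat D$, re-attaching each former $e$-cell along $e_1$ or $e_2$ \emph{exactly as it sits in $X'$}. I claim this move preserves $\RR^4$-embeddability. Granting it, the result is an embedded complex $Y$ with $1$-skeleton $G'$ containing all ordinary cells of $X'$ and exactly one bigon cell $\hat D$. Since all bigon cells share the attaching cycle $e_1e_2$, the complex $X'$ is a subcomplex of the complex obtained from $Y$ by cloning $\hat D$ finitely often; as cloning a $2$-cell preserves $\RR^4$-embeddability and subcomplexes inherit it, $X'$ embeds in $\RR^4$, as desired.

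The crux, which I expect to be the main obstacle, is the edge-doubling move. It should follow from a local analysis inside a tubular neighbourhood $e\times B^3$ of the image of $e$: because $e$ has codimension $3$, each cross-sectional $3$-ball meets the $2$-cells incident to $e$ in a finite family of pairwise disjoint arcs issuing from its centre, and such a family can be rerouted to issue — in any prescribed distribution — from the two endpoints of a short central segment while staying disjoint; there is no cyclic-order obstruction as there would be in a $2$-disk. Thickening $e$ to that segment turns it into the disk $\hat D$ and drags the incident $2$-cells onto the two boundary arcs of $\hat D$, and the disjointness of the resulting sheets away from $\hat D$ is what needs to be verified with care. Everything else is bookkeeping about cycles of $G'$ and repeated use of the cloning operation; should the edge-doubling move already be available as a lemma on $2$-complexes, the graph statement reduces to it and to cloning of $2$-cells via the two middle paragraphs above.
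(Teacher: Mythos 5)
Your proposal is correct and follows essentially the same route as the paper: the graph statement is reduced to an embeddability-preserving edge-cloning/doubling operation on 2-complexes (your ``edge-doubling move'' is the paper's \cref{res:cloning_edges} in a redistribute-rather-than-duplicate form, which follows from it by passing to a subcomplex), and that operation is in turn justified by embedding a parallel disk along $e$ in codimension and cloning/rerouting the incident 2-cells, exactly as in \cref{res:cloning_embeds,res:rerouting}. The only cosmetic difference is that you classify the cells of an arbitrary regular complex on $G'$ and clone bigons at the end, where the paper simply observes that $\Xany(G')$ equals the edge-clone of $\Xany(G)$.
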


\begin{conjecture}[{\cite[Conjecture 2]{van2006graphs}}]
    \label{vdH_DY_YD}
    \label{conj:DY_YD}
    \DY- and \YDtrafos\ of graphs preserve 4-flat\-ness (\cf\ \cref{fig:DY_YD}).
\end{conjecture}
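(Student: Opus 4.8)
The plan is to reduce 4-flatness of a graph to the embeddability of a single ``maximal'' 2-complex, and then to realise the graph \DYtrafos\ and \YDtrafos\ by sequences of embeddability-preserving operations on 2-complexes. I would use the fact (established earlier in the paper) that a graph $G$ is 4-flat if and only if the complex $\Xfull(G)$ obtained by attaching one 2-cell along every cycle of $G$ PL-embeds into $\RR^4$; cloning 2-cells then accounts for complexes with several cells on the same cycle, and subdivisions are irrelevant. Under this reduction it suffices to show: if $\Xfull(G)$ embeds and $G'$ arises from $G$ by a $\Delta Y$- or $Y\Delta$-move, then $\Xfull(G')$ embeds. I would treat the two directions separately, since only one of them looks routine.

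For the $\Delta Y$-direction, let $G'$ replace a triangle $abc$ of $G$ by a claw at a new vertex $v$, and start from a PL embedding $\phi$ of $\Xfull(G)$. Since $abc$ is a cycle, $\phi$ carries a disk $\Delta$ filling it; I would place $v$ in the interior of $\phi(\Delta)$, route arcs $va,vb,vc$ inside $\phi(\Delta)$, and thereby cut $\phi(\Delta)$ into three sub-disks $R_{ab},R_{bc},R_{ca}$, with $R_{xy}$ bounded by $\phi(vx)$, $\phi(vy)$ and the old edge $\phi(xy)$. Every cycle of $G'$ either avoids $v$, hence is a cycle of $G$ and keeps its disk in $\phi$, or runs through two claw edges, say $va,vb$, together with a path $Q$ from $a$ to $b$ avoiding the triangle; then $Q+ab$ is a cycle of $G$ with a disk $D$ in $\phi$, and $D\cup R_{ab}$ (glued along the arc $\phi(ab)$) is a disk bounded by $\phi(va)\cup\phi(vb)\cup\phi(Q)$, with an analogous construction gluing on two of the sub-disks when the corresponding $G$-cycle uses two triangle edges. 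The one subtlety is disjointness, since all of these new disks reuse the same three sub-disks: instead of a single $R_{xy}$ one must take parallel copies inside a tubular neighbourhood of $\phi(\Delta)$, with the normal framing chosen so as to extend the cyclic ``book'' order in which the disks of $\phi$ meet each edge $\phi(xy)$. Because each $R_{xy}$ is a disk meeting $\phi(xy)$ in an arc of its boundary, these orderings extend compatibly across $\Delta$. I would package the whole argument as the assertion that $\Xfull(G')$ is obtained from $\Xfull(G)$ by a \DYtrafo\ of 2-complexes followed by cloning, joining and detaching of 2-cells, so that it follows from the operations established earlier; verifying this identification is essentially all the work in this direction.

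The $Y\Delta$-direction is where I expect the main obstacle, and it is precisely the feature that makes the paper's $Y\Delta$-counterexample for 2-complexes relevant. Here $G'$ deletes a degree-3 vertex $v$ with neighbours $a,b,c$ and adds the triangle $abc$, and one wants to build an embedding of $\Xfull(G')$ from one of $\Xfull(G)$. The naive attempt --- routing each new edge $ab,bc,ca$ along the old two-edge path through $\phi(v)$ --- fails, since the three routings then all pass through $\phi(v)$ and overlap; so the argument must perform a genuine local surgery in a small 4-ball around $\phi(v)$, ``opening up'' the vertex into a triangle while re-threading the finitely many disks of $\Xfull(G)$ incident to $v$ through the resulting configuration. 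The leverage is that $\Xfull(G)$ carries a disk for \emph{every} cycle through $v$, so the link of $\phi(v)$ in $\RR^4$ is as rich as possible and should afford enough room to separate the three new edges; controlling this local picture, and reconciling it with the remaining cells of $\Xfull(G')$, is the crux. It is conceivable that this method yields only the $\Delta Y$-direction cleanly, which would be consistent with $Y\Delta$ genuinely failing for general 2-complexes; if so, I would expect the $Y\Delta$-statement for graphs to need a separate idea specific to the structure of $\Xfull$.
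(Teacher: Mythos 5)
The statement you were asked to prove is stated in the paper as a \emph{conjecture} of van der Holst, and the paper itself does not prove it in full: it establishes only the \DY-half (\cref{res:DY_YD_4_flat}), while the \YD-half is explicitly left open; moreover, \cref{sec:YD_counterexample} constructs a 2-complex that embeds but whose \YDtrafo\ does not, so no purely complex-level argument of the kind you sketch for the \YD-direction can succeed. Your closing assessment --- that your method yields only the \DY-direction cleanly and that the \YD-statement for graphs would need a separate idea specific to the structure of the full complex --- is therefore exactly the paper's own position; you are not missing a proof that the paper possesses.

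For the \DY-half your route is essentially the paper's. The paper subdivides the 2-cell filling the triangle by a new interior vertex and three spokes and collapses the three sub-cells (\cref{res:stellifying}, resting on \cref{res:collapsing}, which in turn rests on cloning and joining of 2-cells), and combines this with the observation (\cref{res:XY_eq_Xany_of_GY}) that stellifying the triangle in $\Xany(G_\Delta)$ yields exactly $\Xany(G_\Y)$ --- an identification that is special to triangles and fails for cycles of length $\ge 4$, which is why the argument does not extend. Your explicit disk-splitting into $R_{ab},R_{bc},R_{ca}$ with parallel copies and a compatible ``book-order'' framing is the informal geometric content of precisely what the cloning/joining/collapsing lemmas make rigorous, and since you propose to package the argument through those operations anyway, the real work is the combinatorial identification you flag: every cycle of $G'$ through the new vertex uses exactly two claw edges and corresponds to a cycle of $G$ using exactly one triangle edge, so a single sub-disk suffices per new cell (your case of a $G$-cycle using two triangle edges does not arise for cycles of $G'$; gluing two sub-disks there is a harmless alternative, not a needed case). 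With that identification in place your \DY-argument is correct and coincides in substance with the paper's; the \YD-half remains open for both you and the authors.
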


\begin{figure}[h!]
    \centering
    \includegraphics[width=0.51\textwidth]{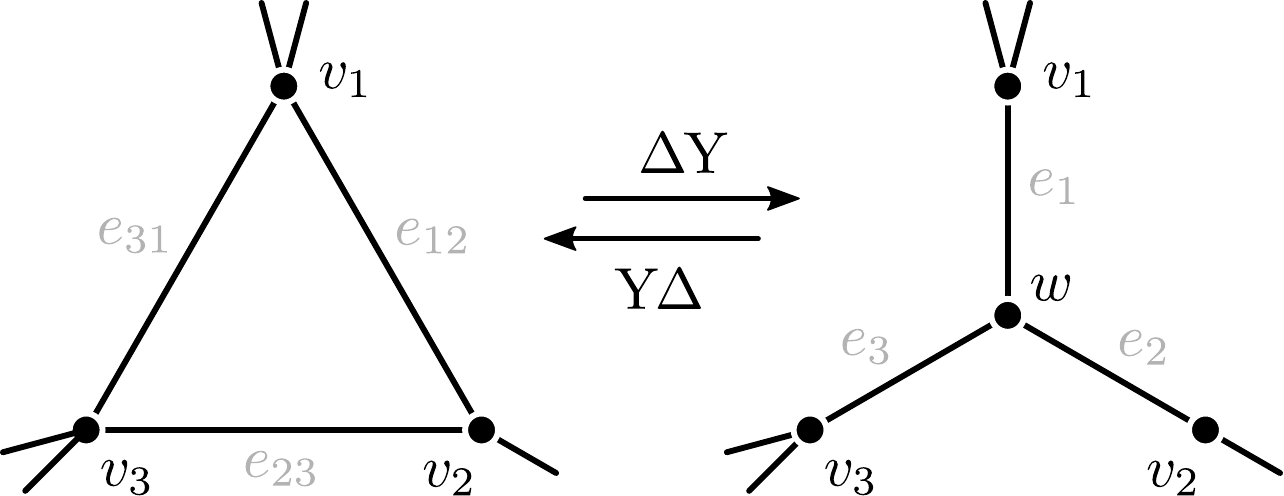}
    \caption{Visualization of \DY- and \YDtrafos.}
    \label{fig:DY_YD}
\end{figure}

We will confirm \cref{conj:cloning_edges} as well as the \DY-part of \cref{conj:DY_YD}. We also provide constructions for complexes and graphs that highlight the difficulties in proving the \YD-part of the conjecture.

\medskip
Another striking analogy to planar and linkless graphs might exist in connection to the Colin de Verdiére graph invariant $\mu(G)$ \cite{van1999colin}: recall that a graph $G$ is planar if and only $\mu(G)\le3$, and is linkless if and only if $\mu(G)\le 4$.
Van der~Holst~conjectured:

\begin{conjecture}[{\cite[Conjecture 3]{van2006graphs}}]
    \label{conj:Colin_de_Verdiere}
    A graph $G$ is 4-flat if and only if it has Colin de Verdiére invariant $\mu(G)\le 5$.
\end{conjecture}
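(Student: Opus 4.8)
This conjecture remains open; below I only outline the most natural line of attack and indicate where it gets stuck. Both sides of the asserted equivalence define minor-closed graph classes: 4-flatness by \cite{van2006graphs} together with the Graph Minor theorem, and $\{G:\mu(G)\le 5\}$ because $\mu$ is minor-monotone (due to \cdV{}; see also van der Holst, Lov\'asz, and Schrijver). Hence the conjecture is equivalent to the statement that the two classes have the same finite list of excluded minors, and the plan is to prove the two inclusions separately. To get ``$\mu(G)\le 5 \Rightarrow G$ is 4-flat'' it suffices that every excluded minor for 4-flatness has $\mu\ge 6$: if such a $G$ failed to be 4-flat it would contain an excluded minor $H$, whence $\mu(G)\ge\mu(H)\ge 6$, a contradiction. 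To get the converse it suffices that every excluded minor for $\{\mu\le 5\}$ fails to be 4-flat: a 4-flat $G$ with $\mu(G)\ge 6$ would contain such an $H$, which, being a minor of $G$, would itself be 4-flat.

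Both inclusions follow at once from the expectation that \emph{each} of the two classes has exactly the Heawood family as its set of excluded minors --- for 4-flatness this is \cref{conj:Heawood} in full strength, of which the present paper establishes only that every Heawood graph \emph{is} an excluded minor (\cref{res:all_Heawood_are_excluded}). Granting this, the first inclusion reduces to showing $\mu(H)=6$ for every Heawood graph $H$. One would start from $\mu(K_7)=6$ and $\mu(K_{3,3,1,1})=6$ --- the latter by applying the apex identity $\mu(G\ast v)=\mu(G)+1$ twice to $K_{3,3}$ --- and then track $\mu$ along the \DYtrafos\ and \YDtrafos\ that generate the family, much as all seven graphs of the Petersen family share the value $\mu=5$. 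The delicate point is controlling how $\mu$ behaves under these operations, since $\YDtrafos$ need not preserve $\mu$ in general. The second inclusion, granting the corresponding characterization of the excluded minors for $\{\mu\le 5\}$, is then immediate from van der Holst's theorem that the Heawood graphs are not 4-flat.

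The genuine obstacle is that \emph{neither} excluded-minor family is actually known, and for a structural reason on the 4-flat side: the problem $2\to 4$ is not known to be decidable, and its natural homological certificate, the van Kampen obstruction, is insufficient precisely in this dimension \cite{freedman1994van}, so there is no combinatorial handle with which to bound the list of excluded minors from above. (With a candidate excluded minor $H$ in hand one can still try to exhibit a $2$-complex on $H$ with non-vanishing van Kampen obstruction, which does certify non-embeddability; this is essentially how $K_7$ and $K_{3,3,1,1}$ are dealt with.) On the other side, identifying the excluded minors for $\{\mu\le 5\}$ is the natural case beyond the Petersen family (which settles $\{\mu\le 4\}$) and is likewise open. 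One could hope to bypass the excluded-minor lists --- on the ``$\mu\ge 6$'' side by extracting geometric structure from a $\mu$-representation of $G$, in the spirit of the known proof that $\mu(G)\ge 5$ forces a Petersen-family minor; on the 4-flat side by transporting 4-flatness and $\mu$ in parallel along \DYtrafos\ and \YDtrafos\ (\cf\ \cref{conj:DY_YD}) --- but the latter route stalls at the $\YDtrafo$: this paper constructs a $2$-complex on which a $\YDtrafo$ destroys embeddability, and $\YDtrafos$ also fail to preserve $\mu$, so the pleasant parallel between the two invariants breaks down exactly where it would be needed.
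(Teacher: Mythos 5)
This statement is \cref{conj:Colin_de_Verdiere}, which the paper records as an open conjecture of van der Holst and does not prove; your assessment that it remains open is therefore the correct one, and your reduction to comparing the two excluded-minor lists is exactly the natural framing. Your account of the evidence and the obstacles is consistent with what the paper actually establishes (all Heawood graphs have $\mu=6$; $G*K_1$ is 4-flat iff $\mu(G*K_1)\le 5$ via \cref{res:linkless_planar_outerplanar}; only the first half of \cref{conj:Heawood} is proved in \cref{res:all_Heawood_are_excluded}), so there is nothing to correct — just be aware that no proof was expected here.
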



%



We list some evidence in favor of this conjecture: all Heawood graphs have~$\mu=6$.
It moreover holds that $G$ is linkless if and only if the \emph{suspension} $G*K_1$, obtained by adding a new dominating vertex, is a 4-flat graph. One direction of this was proven in \cite[Theorem 2]{van2006graphs}, and we prove the other direction in \cref{res:linkless_planar_outerplanar}.
It follows that $G*K_1$ is 4-flat if and only if $\mu(G*K_1)\le 5$.


\medskip
An alternative definition of 4-flat graphs based on ``almost embeddings'' as well as generalizations to even higher dimensions are introduced in \cite{van2009graph}. 
Based on this the authors define a graph invariant $\sigma(G)$ and show that it agrees with $\mu(G)$ on linkless graphs, but disagrees at sufficiently large values. 
It is unknown whether $\sigma(G)\le 5$ is equivalent to being 4-flat. It is known that $\sigma(G)\le \mu(G)$ \cite{kaluvza2022even}.

\subsection{Overview and results}

The results of this paper can be divided into results on general 2-dimensional CW complexes and result on 4-flat graphs. 

\cref{sec:operations} explores operations on general 2-dimensional CW complexes, and especially on when those preserve embeddability into $\RR^4$.

\begin{theorem*}
    The class of 2-complexes embeddable into $\RR^4$ is closed under each of the following operations:
    \begin{myenumerate}
        \item joining 2-cells at vertices and edges (\cref{res:joining_embeds}).
        \item cloning 2-cells (\cref{res:cloning_embeds}).
        \item collapsing 2-cells (\cref{res:collapsing}).
        \item cloning edges (\cref{res:cloning_edges}).
    \end{myenumerate}
\end{theorem*}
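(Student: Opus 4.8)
Each of the four closure properties is established separately, as \cref{res:joining_embeds,res:cloning_embeds,res:collapsing,res:cloning_edges}; here I sketch the common strategy and indicate where the real work lies. In every case the plan is to start from a PL embedding $\phi\colon X\hookrightarrow\RR^4$ and to realize the modified complex $X'$ by altering $\phi$ only inside a regular neighbourhood $N$ of the subcomplex that is being changed --- an edge, a $2$-cell, a pair of $2$-cells, or a vertex --- leaving $\phi$ untouched on $X\sm N$. What makes this feasible is that in $\RR^4$ such neighbourhoods are product-like and have room to spare. I would use four standard facts: (a) a regular neighbourhood of the image of a $2$-cell, taken rel.\ its boundary, has the form $D^2\times D^2$ with the cell as $D^2\times\{0\}$; (b) a regular neighbourhood of an edge has the form $e\times D^3$, inside which the incident $2$-cells appear as pairwise disjoint ``flaps'' emanating from the core arc --- a spider of arcs in each $D^3$-slice; (c) a regular neighbourhood of a vertex is a $4$-ball in which $X$ is the cone over its link graph $L\subseteq\Sph^3$; and (d) general position in its sub-critical range, above all that a PL curve and the $2$-complex $\phi(X)$ generically meet in nothing beyond what one forces, since $1+2<4$.

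Three of the four items are comparatively short. For \emph{cloning a $2$-cell} $c$, I would work in the normal $D^2\times D^2$ over $\phi(c)$ and use a ``dome'': a disc $C^0$-close to $\phi(c)$, with the \emph{same} boundary, meeting $\phi(c)$ in nothing else (the graph over $D^2$ of a small map vanishing on $\partial D^2$); for the dome small enough this avoids all of $\phi(X)$, and nested domes yield any number of parallel copies. \emph{Cloning an edge} $e$ is the same idea one dimension lower, combined with the previous one: inside $e\times D^3$ replace the core arc by two parallel push-offs and distribute the flaps of the $2$-cells incident to $e$ between them --- a manipulation of arcs inside a $3$-ball, hence unobstructed --- then continue each resulting $2$-cell outside $N$ as a dome-copy of its original. \emph{Collapsing a $2$-cell} removes the cell together with a free boundary edge and so merely passes to a subcomplex of $X$, for which an embedding is inherited.

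The substantial case is \emph{joining $2$-cells at a common vertex or edge}; being the reverse of detaching, it asks one to \emph{merge} --- say, two vertices, or two parallel edges on the boundaries of two $2$-cells --- into a single vertex resp.\ edge. My plan is to reconnect the affected piece of $X$ (the star of one of the two vertices, resp.\ the $2$-cell carrying one of the two parallel edges) to the target along a thin ``funnel''. Using (d), choose an embedded PL arc $P$ running from the one vertex (resp.\ one parallel edge) to the other that meets $\phi(X)$ only at its endpoints; a sufficiently thin neighbourhood of $P$, with cross-section modelled on the link $L$ (resp.\ on a normal interval) and capped at the target end, provides the reconnection, and at that end the new material is kept disjoint from the star already present there by letting $P$ arrive along a direction chosen generically inside the normal $\Sph^3$ (resp.\ $\Sph^2$), so that the incoming link lands in the complement of the one already there.

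I expect this joining operation to be the main obstacle, and the crux is precisely the disjointness of the funnel from the rest of $\phi(X)$: the funnel and its cap are themselves $2$-dimensional, so at the critical dimension $2+2=4$ they cannot be pushed off $\phi(X)$ by general position alone. Routing them inside a thin neighbourhood of a \emph{curve} is exactly what reduces matters to the sub-critical case, and the care required is in verifying that this thin neighbourhood, and the two places where the funnel splices onto the old material, genuinely avoid $\phi(X)$ --- in particular that the two links can be separated inside the normal sphere and that no new global self-intersections appear. This locality matters: as the paper shows elsewhere, \YDtrafos\ do not in general preserve embeddability, so the argument cannot tolerate a more aggressive merge. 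Once the local models above are pinned down, the remainder is routine PL bookkeeping.
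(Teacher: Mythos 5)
The central gap is in your treatment of cloning (item \itm2, and hence also item \itm4, whose ``dome-copies'' rely on it). Your fact (a) --- that a regular neighbourhood of an embedded $2$-cell has the form $D^2\times D^2$ with the cell as $D^2\times\{0\}$ --- is false for PL embeddings into $\RR^4$: a PL disk need not be locally flat (consider the cone, from the centre of a small $4$-ball, on a knotted polygon in its boundary $3$-sphere), so there is no normal disk bundle and ``the graph over $D^2$ of a small map vanishing on $\partial D^2$'' is undefined at the non-flat points. Whether a PL-embedded $2$-cell always admits a disjoint parallel copy with the same boundary \emph{while leaving the original untouched} is precisely the question the paper records as open (and it is false in the topological category, by the cited non-completability of topological disks to spheres); so your dome cannot be invoked as a standard fact. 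The paper's proof of \cref{res:cloning_embeds} circumvents this: it triangulates $c^\phi$ into honestly linear triangles, clones each one as a face of an embedded $3$-simplex, joins the clones along shared edges via \cref{res:joining_embeds}, and contracts a spanning forest onto $\partial c$ --- a construction that necessarily modifies the original cell. Note also that the four items are not logically independent in the paper: cloning uses joining, and cloning edges uses cloning together with rerouting.

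Two of the remaining items are argued for the wrong operation. ``Collapsing'' in \cref{res:collapsing} is not an elementary collapse through a free face (which would indeed just pass to a subcomplex): it deletes $c$ and \emph{identifies} the two parallel halves $\gamma_1,\gamma_2$ of $\partial c$ via a homeomorphism, a quotient which in particular reroutes every $2$-cell attached along $\gamma_1$ onto $\gamma_2$; the paper proves it by subdividing $c$ into a chain of squares and applying \cref{res:rerouting}. Likewise, ``joining $c_1,c_2$ at a vertex or edge'' does not identify two distinct vertices, so your funnel along an arc $P$ meeting $\phi(X)$ only at its endpoints addresses a different operation: here $\gamma$ is \emph{already} a common sub-path of $\partial c_1$ and $\partial c_2$, and the operation replaces the two $2$-cells by a single one attached along the concatenation of $\partial c_1-\gamma$ and $\partial c_2-\gamma$. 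The paper's proof is purely local at an endpoint $x$ of $\gamma$: delete $B\cap c_i^\phi$ for a link neighbourhood $B$ of $x^\phi$ and patch with a square (resp.\ triangle) placed inside the link $3$-sphere $\partial B$ joining the two link arcs $\partial B\cap c_i^\phi$ --- such a patch avoids the rest of the $1$-dimensional link because $2+1<3$ --- together with cones to $x$. Your instinct that the argument must be pushed into a subcritical dimension count is correct, but the reduction takes place in the vertex link, not along a connecting curve.
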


We also obtain further results on rerouting 2-cells (\cref{res:rerouting_embeds}), stellifying \mbox{cycles} (\cref{res:stellifying}) (including the important special case of \DYtrafos; \cref{res:DY_YD}), and merging parallel edges (\cref{res:merging_iff_filling}).
These operations preserve embeddability only under additional assumptions and so the precise statements~require more care.

As an application of these operations, we prove the following:
\begin{theoremX}{\ref{res:all_2_cells_vw}}
Suppose a 2-dimensional CW complex $X$ has two vertices $v,w$ such that each 2-cell $c\subseteq \cpx$~is~incident to at least one of $v,w$. Then $\cpx$ can be embedded in $\RR^4$.
\end{theoremX}

To appreciate the strength of \cref{thm ops}, we remark that it is non-trivial to prove that $X$ is 4-embeddable even if its 1-skeleton has only a single vertex; see \cite{MathOverflow19618}, where this statement is deduced from a difficult theorem of Stallings. 
Moreover, \cref{thm ops} becomes false if we replace $v,w$ by a triple of vertices; a counterexample is the triple cone over any non-planar graph, as was proven by Grünbaum \cite{grunbaum1969imbeddings}.

\medskip
The above tools will allow us to verify van der Holst's \cref{conj:cloning_edges} (\cref{res:cloning_edges_4_flat}) and the \DY-part of \cref{conj:DY_YD} (\cref{res:DY_YD_4_flat}) on 4-flat graphs.

\cref{sec:YD} explores the intricacies of the inverse operations -- reverse stellification, and in particular, \YDtrafos. 
We show that the \YD-part of \cref{conj:DY_YD} is at the boundary of what can be true: stellification at cycles~of~length~$\ell\ge 4$ does not preserve 4-flatness (\cref{ex:reverse_stellifying_4_cycle_4_flat}); and we construct an embeddable~complex (based on the Freedman-Krushkal-Teichner complex \cite{freedman1994van}) having a \YDtrafo\ which is not embeddable (\cref{sec:YD_counterexample}).

From \cref{sec:4_flat} on we explore the implications of our results to the theory of~4-flat graphs and their full complexes.
Combined with  results from \cref{sec:operations}, the following operations are shown to preserve 4-flatness:

\begin{theorem*}
    The class of 4-flat graphs is closed under each of the following operations:
    \begin{myenumerate}
        \item cloning edges (\cref{res:cloning_edges_4_flat}).
        \item \DYtrafos (\cref{res:DY_YD_4_flat}).
        \item $k$-clique sums for 
        $k\leq 3$ (\cref{res:3_clique_sums}).
    \end{myenumerate}
\end{theorem*}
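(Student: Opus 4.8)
The plan is to prove the three closure properties separately, but all by the same scheme: to show that an operation sends a $4$-flat graph $G$ (or a pair of $4$-flat graphs) to a graph $G'$, I would start from an arbitrary $2$-complex $X'$ with $1$-skeleton $G'$, transfer it to a $2$-complex supported on $G$ (or on the two summands), where embeddability into $\RR^4$ is already known, and then carry an embedding back across the operation using the results of \cref{sec:operations}.

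\textbf{Part (i): cloning edges} (this is \cref{res:cloning_edges_4_flat}, confirming \cref{conj:cloning_edges}). Let $G'$ be obtained from $G$ by replacing an edge $e=xy$ with two parallel edges $e_1,e_2$, and let $X'$ be a $2$-complex on $G'$. Every cycle of $G'$ avoids $\{e_1,e_2\}$, or uses exactly one of the two, or equals the digon $e_1e_2$ (a cycle using both would have to use both parallel edges at $x$ \emph{and} at $y$). Hence replacing each occurrence of $e_1$ or $e_2$ in the $2$-cell boundaries of $X'$ by $e$ produces a $2$-complex $X$ on $G$, except for the $2$-cells attached to the digon, which I set aside. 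Since $G$ is $4$-flat, $X$ embeds into $\RR^4$; cloning the edge $e$ of this embedded complex by \cref{res:cloning_edges}, with the $2$-cells routed through $e_1$ or $e_2$ as prescribed by $X'$, gives an embedding of $X'$ minus its digon $2$-cells. There $\phi(e_1)$ and $\phi(e_2)$ are parallel arcs cobounding a thin bigon disc; as only finitely many $2$-cells are attached to $e_1$ and to $e_2$, I would choose the cloning direction in the normal $\RR^3$ so that a family of pairwise disjoint parallel copies of this bigon disc---one per digon $2$-cell---avoids all of them. Adding these discs completes an embedding of $X'$, so $G'$ is $4$-flat.

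\textbf{Part (ii): $\Delta Y$-transformations} (this is \cref{res:DY_YD_4_flat}, the $\Delta Y$-part of \cref{conj:DY_YD}). Let $G'$ arise from $G$ by the \DYtrafo\ at a triangle $xyz$: a new degree-$3$ vertex $v$ with spokes $vx,vy,vz$ is added and the edges $xy,yz,zx$ are deleted; let $X'$ be a $2$-complex on $G'$. A cycle of $G'$ through $v$ uses exactly two spokes, say $vx$ and $vy$, and its remaining part is a path $Q$ avoiding $v$; since $Q$ lies in $G'-v = G-\{xy,yz,zx\}$, the edge $xy$ together with $Q$ is a cycle of $G$ using exactly the triangle edge $xy$. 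Rerouting every $v$-incident $2$-cell of $X'$ through the corresponding triangle edge, and leaving the remaining $2$-cells (which sit on cycles disjoint from $\{xy,yz,zx\}$) untouched, yields a $2$-complex $X$ on $G$ in which no $2$-cell contains two of the edges $xy,yz,zx$, and whose \DYtrafo\ at $xyz$ is by construction exactly $X'$. Since $G$ is $4$-flat, $X$ embeds into $\RR^4$; and since $X$ satisfies the hypothesis of the $2$-complex statement \cref{res:DY_YD} (the \DYtrafo\ case of stellifying cycles, \cref{res:stellifying}), so does $X'$. Hence $G'$ is $4$-flat.

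\textbf{Part (iii): $k$-clique sums for $k\le 3$} (this is \cref{res:3_clique_sums}). It suffices to treat one clique sum $G=G_1\cup_K G_2$ along a clique $K$ with $|K|\le 3$; the summands $G_1,G_2$ are minors of $G$, hence $4$-flat. Let $X$ be a $2$-complex on $G$. Each of its $2$-cells sits on a cycle lying in $G_1$, lying in $G_2$, or \emph{crossing} the separator $K$. A crossing cycle $C$ meets each $G_i\setminus K$ in a disjoint union of subpaths with endpoints on $K$, and because $|K|\le 3$ these close up, using a single edge of the clique $K$, into a cycle $C_i\subseteq G_i$; I would then replace the crossing $2$-cell by a $2$-cell on $C_1$ together with one on $C_2$, arranged so that $C_1$ and $C_2$ overlap exactly in that clique edge. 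In this way $X$ is distributed over $2$-complexes $X_1$ on $G_1$ and $X_2$ on $G_2$, both of which embed into $\RR^4$. One then amalgamates an embedding of $X_1$ with an embedding of $X_2$ along $K$; for each crossing cycle $C$, the two pieces filling $C_1$ and $C_2$ glue along their common clique edge into a disc bounding $C$, which a general-position push-off in $\RR^4$ converts into an embedded $2$-cell on $C$. Restricting the resulting complex to $X$ yields the desired embedding, so $G$ is $4$-flat.

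\textbf{Main obstacle.} In parts (i) and (ii) the substance is carried by the $2$-complex results \cref{res:cloning_edges} and \cref{res:DY_YD}; what is genuinely new there is only the combinatorial bookkeeping (and, in (i), the bigon discs). I expect the crux to be the amalgamation step of part (iii): two $2$-complexes embedded in $\RR^4$ that share a subcomplex $K$ on at most three vertices must be re-embedded so that they meet \emph{only} along $K$. A $2$-complex need not collapse onto $K$, so one cannot simply retract one side into a regular neighborhood of $K$; instead the argument must exploit that the $1$-skeleton of $K$---a point, an arc, or a triangle---is unknotted in $\RR^4$ (codimension $\ge 3$), so as to place a flat copy of $K$ in a hyperplane and push the two complexes to opposite sides of it, in the spirit of the argument showing that linkless graphs are closed under clique sums of order $\le 3$.
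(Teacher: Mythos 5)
Your parts (i) and (ii) are correct and, apart from running the reduction backwards (from an arbitrary complex on $G'$ back to one on $G$, rather than observing that the full complex of $G'$ is exactly what the complex-level operation produces from the full complex of $G$), they are the same argument as the paper's: both rest entirely on \cref{res:cloning_edges} and on \cref{res:stellifying}/\cref{res:DY_YD}. Two small points: in (i) your ``parallel bigon discs'' are more cleanly obtained by cloning the digon 2-cell via \cref{res:cloning_embeds}; in (ii) you should say explicitly that the triangle $xyz$ may be filled by a 2-cell before invoking \cref{res:stellifying} -- this is free because $xyz$ is a cycle of the 4-flat graph $G$.

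The genuine gap is in part (iii), at exactly the step you flag. Your amalgamation needs an embedding of $X(G_1)$ in which the clique $K$ lies in a hyperplane and all of $X(G_1)$ lies in one closed halfspace; unknottedness of the 1-complex $K$ in $\RR^4$ does not give this. Such a position forces $K$ (say a triangle) to bound a disc -- the cone into the opposite halfspace -- meeting $X(G_1)$ only in $K$, and nothing guarantees that $X(G_1)$ has an embedding with this property: for instance, if $G_1$ contains a $K_4$ disjoint from $K$, then $X(G_1)$ contains a 2-sphere $\Sigma$ disjoint from $K$, and in a given embedding the linking number of $K$ with $\Sigma$ in $\RR^4$ may be non-zero, in which case $K$ bounds no disc in the complement at all. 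There is no analogue of the Robertson--Seymour--Thomas flat-embedding theorem available here, and this sort of relative flatness in $\RR^4$ is close in spirit to the paper's open \cref{q:pm_sliced}, so it cannot be waved through ``in the spirit of'' the linkless case. The paper's proof of \cref{res:3_clique_sums} avoids the issue entirely: it only uses a locally flat \emph{interior point} of the 2-cell $c_i$ bounded by $\Delta_i$ (generic interior points are locally flat), removes small balls around these two points and glues the two copies of $\Sph^4$ along the resulting boundary 3-spheres, turning $c_1\cup c_2$ into a cylinder between $\Delta_1$ and $\Delta_2$ which is then collapsed using \cref{res:collapsing_cylinder}. Your separate treatment of ``crossing'' 2-cells is also both imprecise (a crossing cycle that itself uses a clique edge and visits the third clique vertex does not split into two cycles sharing a single clique edge) and unnecessary: by \cref{res:variants_equal} it suffices to embed $\Xind(G_1 *_3 G_2)$, and every induced cycle of a 3-clique sum lies entirely in one summand -- this is how the paper disposes of that point, and it likewise handles $k<3$ by cloning and subdividing edges rather than by a separate amalgamation.
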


Other results obtained in \cref{sec:4_flat} include:
\begin{itemize}
    \item we consider alternative notions of 4-flatness: instead of \emph{every regular} complex on $G$ being embeddable, we require that \emph{every} (not necessarily regular) complex, or \emph{every induced} complex (\ie\ 2-cells only along induced cycles) is embeddable.
    We show that, surprisingly, both modifications give rise to the same notion of 4-flatness, despite being seemingly stronger and weaker respectively (\cref{res:variants_equal}).
    \item we show that 4-flat graphs are ``locally linkless'' (\cref{res:locally_linkless}).
    \item we show that a suspension $G*K_1$ 
    yields a 4-flat graph if and only if $G$ is linkless (\cref{res:linkless_planar_outerplanar}).
    \item we show that there is no simple inclusion relation between 4-flat and knotless graphs 
     (\cref{sec:4flat_knotless}).
\end{itemize}

Finally, in \cref{sec:Heawood_excluded_minor}, we verify the first statement of \cref{conj:Heawood}:

\begin{theoremX}{\ref{res:all_Heawood_are_excluded}}
\label{res:all_Heawood_are_excluded}
    All graphs of the Heawood family are excluded minors for the class of 4-flat graphs.
\end{theoremX}

We emphasize that this in not a priori a finite problem: recall that there is no known algorithm for deciding whether a given 2-complex embeds into $\RRf$, and it might be that none exists. There is also 
no known algorithm for deciding whether a given graph is 4-flat. 

The Graph Minor Theorem \cite{GMXX} 
says that every minor-closed class $F$ of finite graphs is characterised by a finite set 
$\exm{F}$ of excluded minors. However, the proof is inherently non-constructive, and it is often hopelessly difficult to determine $\exm{F}$ given $F$ even if the latter has a natural definition. Indeed, Fellows \& Langston \cite[Theorem~8]{FelLanSea} prove that there is no algorithm to compute the set of excluded minors of $F$ even if 
we are given a Turing machine that accepts precisely the graphs in $F$.

This theoretical difficulty is hard to overcome even for  specific classes $F$ of interest: there are more than 16 thousand excluded minors for embeddability into the torus \cite{GaMyChFor}, 
and more than 68 billion excluded minors for \Y$\Delta$\Y-reducible graphs  \cite{YuMor}.  On the other hand, some natural classes $F$ have a small $\exm{F}$ that we know exactly; these include outerplanar, series-parallel, planar, and linkless graphs \cite{RoSeThoSac}.  
Thus naturally defined minor-closed families $F$ for which we can find $\exm{F}$ exactly are rare. 
The best next candidate for such a class might be the class of 4-flat graphs, due to \cref{vdH_Heawood}.

\fi 

\tempnewpage

\section{Basic terminology and examples}
\label{sec:notation}

\subsection{Graphs, complexes and embeddings}

Throughout this paper $G$ denotes a finite graph or \emph{multi-graph}, i.e.\ we allow loops and parallel edges.
We consider graphs as 1-dimensio\-nal CW complexes, \ie\ as topological spaces with 0-cells (the vertices) and 1-cells (the edges).

We write $X$ to denote a \emph{2-dimensional CW complex} (or just \emph{2-complex} for short). 
A 2-complex over $G$ is obtained by attaching homeomorphs of the disk $\mathbb D^2$ ---called  \emph{2-cells}--- along closed walks in $G$.
Each 2-cell $c\subset X$ is determined by its \emph{attachment map} $\partial c\:\partial \mathbb D^2\to G$.
%
If a 2-cell $c$ is attached along a cycle, \ie\ a closed walk without self-intersections, and the attachment map is injective, then  $c$ is said to be \emph{regular}. We say that $X$ is \emph{regular}, if each of its 2-cells is.
Given a complex $X$, the underlying graph, called the \emph{1-skeleton} of $X$, is denoted by $G_X$. 
%
Given a cell $c\subseteq X$, we write $X-c$ for the subcomplex with this cell removed (but its boundary $\partial c$ left intact). 


All subsets of $\RR^d$, all embeddings into $\RR^d$ and all homeomorphisms between~such sets, are assumed to be piecewise linear (PL). We follow the terminology of  \cite{rourke2012introduction} for piecewise linear topology.
%
%
Recall that if $Y$ is a \PL-set then for
each point $y\in Y$ and for each sufficiently small ball (or star-shaped neighborhood) $B\subset\RR^d$,\nls the~in\-tersection $B\cap Y$ coincides with the cone over $\partial B\cap Y$ with apex at $y$.
Each neighborhood $B$ with this property is called a \emph{link neighborhood} of $y$, and $\partial B\cap Y$ is a \emph{link} of $Y$ at $y$.


All embeddings  in this paper are, if not stated otherwise, into $\RR^4$.
Given an~embedding $\phi\:X\to\RR^4$, we write $X^\phi$ instead of $\phi(X)$ to denote the image of~$\phi$.
A complex $X$ is \emph{embeddable} (or \emph{4-embeddable}) if there exists an embedding $\phi\: X\to\RR^4$.
%
%
Note~that all 2-complexes embed in $\RR^5$, though not necessarily in $\RR^4$.
The classical example for a non-embeddable 2-complex is the triple cone over $K_5$ \cite{grunbaum1969imbeddings}.
Let $\mathcal K_n$ be the complex obtained from the complete graph $K_n$ by attaching a 2-cell along each triangle. 
Then $\mathcal K_7$ provides another example of a non-embeddable 2-complex (\cite[Theorem 5.1.1]{matouvsek2003using}).
Removing a single 2-cell $\Delta$ from $\mathcal K_7$ yields a complex $\mathcal K_7-\Delta$ that is however embeddable, and we will frequently use it as an example:


\begin{example}[$\mathcal K_7-\Delta$]
    \label{ex:K_7-Delta}
    We have $\mathcal K_4\simeq \Sph^2$ and $K_3\simeq \Sph^1$.
    The join of two spheres is a sphere: $K_3\star\mathcal K_4\simeq \Sph^1\star \Sph^2\simeq\Sph^4$ \cite[p.\ 9]{Hatcher}. 
    Since $\mathcal K_7-\Delta\subset K_3\star\mathcal K_4$ (the~missing 2-cell $\Delta$ corresponds precisely to the missing ``filling'' of $K_3$), this provides an embedding of $\mathcal K_7-\Delta$ into $\Sph^4$.
\end{example}

Moreover, we can add back in the missing 2-cell and obtain an ``almost embedding'' of $\mathcal K_7$ with only a single intersection between two 2-cells with disjoint~boundary as follows:

\begin{example}
    \label{ex:K7_almost_embeds}
    Starting from the construction $X:=K_3\star \mathcal K_4$ of \cref{ex:K_7-Delta}, pick a point $x$ in the interior of some 2-face of $\mathcal K_4\subset X$, and note that the cone $C_x$ over the circle $K_3\subset X$ with apex at $x$ lies entirely within $X$.
    Moreover, $C_x$ is homeomorphic to a disc filling $K_3$. 
    By the definition of join \cite[p.\ 9]{Hatcher}, $x$ is the only intersection of $C_x$ with the rest of $\mathcal K_7-\Delta \subset X$.
\end{example}

\subsection{Full complexes and 4-flat graphs}

Throughout the text we use the following definition for 4-flat graphs:

\begin{definition}[full complex, 4-flat graph]\quad
    \label{def:4_flat}
    \label{def:full_complex}
    \begin{myenumerate}
        \item The \emph{full complex} $X(G)$ of $G$ is the CW complex obtained by attaching~a~2-cell along each cycle of $G$.
        \item A graph is \emph{4-flat} if its full complex is embeddable. 
    \end{myenumerate}
\end{definition}

The full complex is clearly a regular complex. 
Since every regular 2-complex with 1-skeleton $G$ is a subcomplex of $X(G)$, this is equivalent to the definition given in the introduction.

Some relevant examples of graphs that are not 4-flat follow from previous discussions: since $\mathcal K_7$ is not~embeddable, $K_7$ is not 4-flat.
Since the triple cone over~$K_5$ is not embeddable, $K_5 * \overline K_3$ is not 4-flat (here $\overline K_3$ denotes the complement graph of $K_3$, and $*$ denotes the graph join operation).
Since $K_7$ is known to be an excluded minor, $K_7-e$ ($K_7$ with an edge removed) is 4-flat.




\subsection{Contraction}
\label{sec:contractions}

Given an embeddable 2-complex $X$ and a cell $c\subseteq \cpx$ with injective attachment map, it is known that the quotient complex $\cpx/c$ is embeddable as well.
To see this, choose a small neighborhood $B$ of $c^\phi$ and replace its interior with a cone over $\partial B\cap X^\phi$. This argument is made precise by the following lemma.


\begin{lemma}
If $X$ is embeddable and $c\subseteq X$ is collapsible\footnote{We will refrain from recalling the definition of collapsible complex, as we will only need the trivial special case where $c$ is a regular 2-cell or an edge with distinct end-vertices in a complex.} (\eg\ when $c$ is a cell of $X$) 
then $X/c$ is embeddable too.
\end{lemma}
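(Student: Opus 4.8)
The plan is to make precise the sketch given just before the statement: if $\phi\colon X\to\RR^4$ is an embedding and $c\subseteq X$ is a cell (a regular $2$-cell or an edge with distinct endpoints), then I want to produce an embedding of $X/c$ by ``coning off'' a small neighbourhood of $c^\phi$. First I would fix a link neighbourhood $B\subset\RR^4$ of some interior point $x$ of $c^\phi$; since $\phi$ is PL, by choosing $B$ small enough I may assume that $B\cap X^\phi$ is PL-homeomorphic to the cone with apex $x$ over $\partial B\cap X^\phi$, and moreover (shrinking further) that $B$ meets $X^\phi$ only in cells incident to $c$, in the standard ``star'' pattern. Because $c$ is a cell (hence contractible, indeed collapsible to $x$), the pair $(c^\phi, c^\phi\cap B)$ deformation retracts onto $x$, and I can isotope the embedding so that $c^\phi$ is contained in $B$.

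The key step is then the local replacement. Inside the ball $B$ we have $B\cap X^\phi$ equal to the cone over $L:=\partial B\cap X^\phi$ with apex $x$, where $c^\phi\subseteq B$ is one of the ``rays'' of this cone. Collapsing $c$ in $X$ corresponds, in the image, to collapsing $c^\phi$ inside $B\cap X^\phi$; since $c^\phi$ is collapsible, the quotient $(B\cap X^\phi)/c^\phi$ is still PL-homeomorphic to a cone over $L$ (one checks that collapsing a collapsible subcomplex of a cone, sitting over a subcomplex of the base, yields again a space PL-homeomorphic to the cone — this is where I would either cite the collapsing lemmas of \cite{rourke2012introduction} or give the short explicit PL homeomorphism). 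I then define the new map on $X/c$ to agree with $\phi$ outside $B$ and to be this cone inside $B$; the two descriptions agree on $\partial B$, so they glue to a continuous PL map. Injectivity holds because outside $B$ nothing changed, inside $B$ the cone structure is injective away from the apex, and on $\partial B$ the map is $\phi$ restricted, which is injective; the only identifications made are exactly those defining $X/c$. Hence $X/c$ embeds in $\RR^4$.

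The main obstacle I expect is the bookkeeping in the local model: one has to argue carefully that a link neighbourhood of an interior point of $c^\phi$ actually ``sees'' all of $c^\phi$ after a suitable isotopy, i.e.\ that $c^\phi$ can be swept into an arbitrarily small ball while dragging the incident cells along, and that the resulting quotient is again standard. This is routine PL topology — regular neighbourhood theory plus the fact that collapses can be realised by ambient PL isotopies — but it is the step that needs the hypothesis that $c$ is a genuine cell (so that it is collapsible, and its image meets each ball $B$ in a collapsible piece) rather than an arbitrary subcomplex. Everything else (the gluing, the injectivity check) is formal once the local picture is set up, which is why the authors relegate it to a lemma with the collapsibility hypothesis doing the real work.
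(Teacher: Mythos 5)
The crux of your argument is the local model, and that is where there is a genuine gap. After an ambient isotopy pulling $c^\phi$ into a link ball $B$ of an interior point $x$, you assert that $B\cap X^\phi$ is the cone over $L:=\partial B\cap X^\phi$ with apex $x$, with $c^\phi$ one of its ``rays''. This is unjustified and in general false: the shrinking isotopy drags the cells incident to $c$ along in an uncontrolled way, and nothing forces the new intersection with $B$ to be star-shaped from $x$. The claim is also in tension with your own gluing step: if $c^\phi$ really were a subcone sitting over a subcomplex of the base $L$, it would meet $\partial B$ in more than a point, and collapsing $c$ would then identify points of $\partial B$, so the modified map could not agree with $\phi$ on $\partial B$; if instead $c^\phi\subset\Int B$, then $c^\phi$ is not a subcone and the principle you invoke (collapsing a collapsible subcomplex of a cone lying over a subcomplex of the base) does not apply. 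What your replacement step actually requires is that $B\cap X^\phi$ be a regular neighbourhood of $c^\phi$ in $X^\phi$, so that it carries the mapping-cylinder structure making $(B\cap X^\phi)/c^\phi$ PL homeomorphic, relative to its frontier, to the cone over $L$ -- this is exactly what makes the coned-off space equal to $X/c$, and your choice of $B$ (a link ball of a point, fixed before the isotopy) never secures it. The shrinking isotopy itself is also not free: ``$c$ deformation retracts to $x$'' gives homotopy information only, and the honest justification again runs through regular neighbourhood theory.

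The paper's proof takes a cleaner route that sidesteps all of this: instead of shrinking $c^\phi$ into a ball around a point, it chooses $B$ to be a regular neighbourhood of $c^\phi$ itself, with respect to a triangulation of $\RR^4$ in which $X^\phi$ and $c^\phi$ are subcomplexes. Collapsibility of $c$ is then used exactly once, to conclude that $B$ is a $4$-ball (\cite[Corollary 3.27]{rourke2012introduction}); one then deletes $X^\phi\cap B$ and replaces it by the cone over $\partial B\cap X^\phi$ from an interior point (after identifying $B$ with a convex set), and the regular-neighbourhood structure of $B\cap X^\phi$ around $c^\phi$ is what guarantees that the result is an embedding of $X/c$. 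Your outline can be repaired by making this choice of $B$ from the start, but as written the key local step is a real gap rather than routine bookkeeping.
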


The proof requires the theory of regular neighborhoods, see \eg\ \cite[Chapter 3]{rourke2012introduction}.

\begin{proof}[Proof (sketch)]
    %
    Fix a triangulation of $\RR^d$ with simplicial subcomplexes that triangulate $X^\phi$ and $c^\phi$ respectively (which exists by \cite[Addendum 2.12]{rourke2012introduction}). 
    Let $B\subset\RR^4$ be a regular neighborhood of $c^\phi$ with respect to this triangulation (\eg\ as defined in \cite[Chapter 3]{rourke2012introduction}). 
    Since $c^\phi$ is collapsible, $B$ is a ball  (by \cite[Corollary 3.27]{rourke2012introduction}).
    Then there exists a homeomorphism $\smash{\psi\:B\rightarrow K}$ to a convex set $K\subset\RR^4$.
    Let $C$ be the cone 
    over $\psi(\partial B\cap X^\phi)\subset \partial K$ with apex at some interior point of $K$.
    Remove $X^\phi\cap B$ and replace it with $\psi^{-1}(C)$.
    This yields an embedding of $X/c$.
\end{proof}



\tempnewpage

\section{Operations that preserve embeddability}
\label{sec:operations}

In this section we study operations on general (2-dimensional) CW complexes~and circumstances under which they preserve embeddability. 
The eventual goal~is~to~develop a set of flexible tools to build and modify full complexes of 4-flat graphs, but this will also lead to new results and conjectures for general 2-complexes.

In order to be concise, we will often not distinguish an attachment map $\partial c\:\partial \mathbb D^2\to G_X$ from its image, considering $\partial c$ as both a parametrized curve and a subset of $X$, with the understanding that $\partial c$ is injective when we do so. 

\subsection{Joining 2-cells}
\label{sec:joining}

Let $c_1,c_2\subseteq X$ be two distinct 2-cells and  $\gamma\subset\partial c_1\cap \partial c_2$ a common path  (potentially of length zero) in their boundaries.
\emph{Joining} $c_1$ and $c_2$ along $\gamma$ means to replace $c_1,c_2$ by a new 2-cell $c$ whose attachment map is a concatenation of $\partial c_1-\gamma$ and $\partial c_2-\gamma$.  

\begin{figure}[h!]
    \centering
    \includegraphics[width=0.65\textwidth]{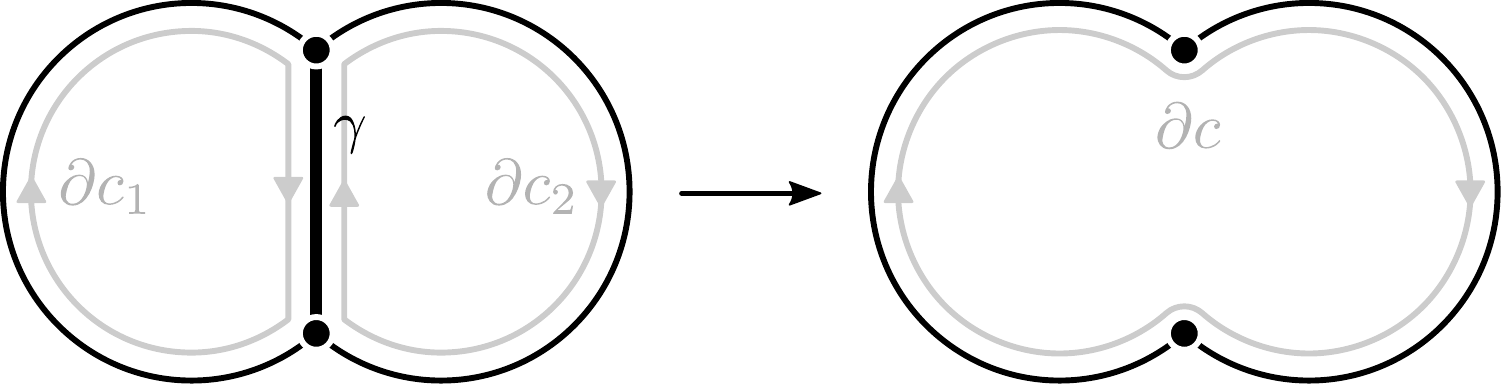}
    \caption{Joining 2-cells across a path $\gamma$.}
    \label{fig:DY_YD}
\end{figure}

\begin{lemma}
    \label{res:joining}
    \label{res:joining_embeds}
    Let $X$ be an embeddable 2-complex, and $c_1,c_2$ two of its 2-cells. Then the 2-complex obtained by joining $c_1,c_2$ along a path $\gamma$ of length $\ell \le 1$ (\ie\ at a vertex or edge) is embeddable too.
\end{lemma}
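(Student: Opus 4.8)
The idea is to perform the join \emph{locally}, inside a small ball neighborhood of the path $\gamma$, so that the modification does not interfere with the rest of the embedding $X^\phi$. Fix a PL embedding $\phi\colon X\to\RR^4$ and let $X^\phi$ denote its image. First I would treat the case $\ell=0$, where $\gamma$ is a single vertex $v$. Pick a link neighborhood $B$ of $v^\phi$ in $\RR^4$; by the cone structure of PL sets, $B\cap X^\phi$ is the cone with apex $v^\phi$ over the link $L := \partial B\cap X^\phi$. Inside $L$, the two 2-cells $c_1,c_2$ contribute two arcs $a_1,a_2$ (each is the link of $v$ in $c_i$), and these arcs are disjoint except possibly at their endpoints — actually disjoint, since $c_1\ne c_2$ and they meet only at $v$. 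The new cell $c$ obtained by joining has, near $v$, a link consisting of a single arc $a_1\cup a_2$ (we delete the ``corner'' at $v$ and run straight through). So the plan is: delete the cone over $a_1$ and $a_2$ inside $B$, and replace it with a 2-cell sitting inside $B$ whose boundary on $\partial B$ is $a_1\cup a_2$ and which is embedded and disjoint from the rest of $X^\phi\cap B$ (which is the cone over $L\setminus(a_1\cup a_2)$). Such a disc exists because $B$ is a PL ball of dimension $4$ and a PL arc of dimension $1$ in the $3$-sphere $\partial B$ bounds a PL $2$-disc in the interior of $B$ that avoids any prescribed $2$-complex in the cone — here I would push the new disc slightly off the cone using general position in $\RR^4$ (codimension $2$), taking care that the disc meets $\partial B$ only along $a_1\cup a_2$. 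Stitching this in yields an embedding of the joined complex.

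For $\ell=1$, where $\gamma$ is an edge $e$ with endpoints $u,w$, I would use a regular neighborhood $B$ of $e^\phi$ in $\RR^4$ instead of a point-link; since $e$ is collapsible, $B$ is a PL $4$-ball (as in the contraction lemma of \cref{sec:contractions}). The intersection $B\cap X^\phi$ is (PL homeomorphic to) the product-like neighborhood of $e$ consisting of all cells incident to $e$. The two cells $c_1,c_2$ meet $B$ in two ``slabs'' glued along the arc $e^\phi$; joining replaces these two slabs by a single slab that runs around, i.e.\ the new cell's local picture is $(\partial c_1-e)\cup(\partial c_2 - e)$ spanning a disc in $B$ that no longer contains $e^\phi$ in its interior but routes around it. Again the key point is that this replacement disc can be found inside the ball $B$, embedded and disjoint from the other cells incident to $e$: on the boundary sphere $\partial B$ the relevant piece of $X^\phi$ is a graph (link of $e$), the two slabs restrict to two arcs meeting at the two points over $u$ and $w$, and I need a $2$-disc in $B$ with prescribed boundary curve $a_1\cup a_2$ (a circle or arc) on $\partial B$ avoiding the cone over the rest. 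This is a standard PL ``filling in a ball'' statement combined with general position to push off the existing $2$-complex (codimension $2$ in $\RR^4$).

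The main obstacle — and the reason the lemma is restricted to $\ell\le 1$ — is controlling the replacement disc so that it stays embedded and disjoint from the unchanged part of $X^\phi$. For $\ell=0$ and $\ell=1$ the portion of $\gamma$ we ``route around'' is contractible (a point or an edge) and its regular neighborhood is a ball, so the local model is simple enough that the new disc can be produced by coning/general position. For longer paths $\gamma$ the boundary data on $\partial B$ can be knotted or linked with the rest of the complex in a way that obstructs finding such a disc, which is exactly why joining along longer paths is not claimed here (and indeed fails in general, cf.\ the later sections on rerouting and stellification). So the heart of the argument is a careful local-replacement lemma: \emph{in a PL $4$-ball $B$, given a PL arc (resp.\ circle) $\alpha\subset\partial B$ and a PL $2$-complex $Z$ in $B$ with $Z\cap\partial B$ disjoint from $\alpha$, there is a PL $2$-disc $D\subset B$ with $D\cap\partial B = \alpha$ and $D\cap Z=\emptyset$} — proved by coning $\alpha$ from an interior point and then applying general position (codimension $4-2=2$) to make the cone miss $Z$ while keeping its boundary fixed on $\partial B$. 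I would state and prove this sublemma first, then derive both cases $\ell=0,1$ by applying it in the appropriate neighborhood $B$.
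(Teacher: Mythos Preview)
Your high-level strategy --- perform a local surgery inside a small ball around $\gamma$ --- is exactly what the paper does. But the execution has two genuine gaps.

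\textbf{The local model of the joined cell is wrong.} For $\ell=0$ the attachment map of the new cell $c$ is the \emph{concatenation} of the loops $\partial c_1$ and $\partial c_2$ at $v$; it therefore still passes through $v$, indeed twice. So the link of $v$ in $c$ consists of two disjoint arcs, not ``a single arc $a_1\cup a_2$'', and you do not ``delete the corner and run straight through''. Concretely, the patch you insert in $B$ must have its remaining boundary running along the 1-skeleton through $x=v^\phi$ (namely along the four radial segments corresponding to the edges of $\partial c_1,\partial c_2$ at $v$). A disc $D$ with $D\cap Z=\emptyset$, where $Z$ is ``the rest of $X^\phi\cap B$'' including those 1-skeleton edges, would produce a 2-cell whose boundary is \emph{not} in the 1-skeleton --- hence not an embedding of the joined complex. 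The same issue is present (more mildly) for $\ell=1$: the joined cell still visits both endpoints of $e$, so the patch cannot avoid the 1-skeleton there.

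\textbf{General position does not give disjointness here.} Your sublemma asserts a PL 2-disc in a 4-ball disjoint from a given 2-complex $Z$, ``by general position (codimension $4-2=2$)''. But two 2-dimensional objects in a 4-manifold are in \emph{complementary} dimension: a generic perturbation yields finitely many transverse intersection points, not the empty set. Removing such points is precisely the delicate step in dimension 4 (no Whitney trick), so the sublemma is not established as stated.

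The paper sidesteps both issues by building the patch mostly inside the link 3-sphere $\partial B$, where the remainder of the complex is only a \emph{graph}. For $\ell=0$ it places a square $Q\subset\partial B$ with two opposite sides on the arcs $\rho_1,\rho_2$ and interior disjoint from the rest of the link, and then cones the two free sides of $Q$ to the apex $x$; these cone triangles lie along the 1-skeleton radii by design, so the resulting patch has the correct boundary through $v$. For $\ell=1$ it works at a single endpoint of $e$ (not in a regular neighborhood of all of $e$): a triangle in $\partial B$ with a vertex at the link point of $e$ and two sides on $\rho_1,\rho_2$, plus one cone triangle, joins the cells across a short initial segment of $e$; the remaining stub of $e$ is then contracted. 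If you rework your patch to respect the 1-skeleton and shift the disjointness argument into $\partial B$, your outline becomes the paper's proof.
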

%
\begin{proof}
  Fix an embedding $\phi\:X\to\RR^4$. 
    Choose an end-vertex $x$ of  $\gamma^\phi$, and a link neighbor\-hood $B\subset\RR^4$ of $x$.
    The idea is to delete $B\cap c_1^\phi$ and $B\cap c_2^\phi$, to fill the resulting holes with suitable  ``patches'' that join the 2-cells into a single one and, in case $\ell=1$, to contract $\gamma^\phi\setminus B$ onto the other end-vertex of~$\gamma$.
    To define the patches, let $\rho_i:=\partial B\cap c_i^\phi$ be the paths in the link that correspond to the original 2-cells.
    We distinguish two cases according to the length $\ell$ of $\gamma$.

    \emph{Case $\ell=0$:} 
    There exists a square $[0,1]^2\simeq Q\subseteq \partial B$ with two opposite edges attached along $\rho_1$ and $\rho_2$ respectively, disjoint from the rest of the link,\nls and mapped into $\RR^4$ non-injectively 
     only where required by the boundary conditions.
    Let~$C_1,C_2\subset B$ be cones over the other two edges of $Q$ with apex at $x$.
    The union $Q\cup C_1\cup C_2$ then patches the hole and joins the 2-cells across $\gamma$ (note that $\gamma^\phi=x$).

    \emph{Case $\ell=1$:}
    Since $x$ is an end-vertex of $\gamma$, the link at~$x$~contains a unique point $y:=\partial B\cap \gamma^\phi$ corresponding to $\gamma$.
    There also exists a (filled in) triangle $\Delta\subseteq \partial B$ with a vertex mapped to $y$ and its two adjacent edges attached along $\rho_1\cup\rho_2$, disjoint from the rest of the link, and mapped into $\RR^4$ non-injective only where required by the boundary conditions.
    Let $C\subset B$ be the cone over the third edge of $\Delta$ with its apex at $x$.
    The union $\Delta\cup C$ patches the hole and joins the 2-cells across the small initial segment $B\cap\gamma^\phi$ of $\gamma^\phi$.
    Finally, we contract $\gamma^\phi\setminus B$ onto the other end-vertex of~$\gamma$ to finalize the joining.
\end{proof}

Joining along a path of length two is no longer guaranteed to preserve embeddabili\-ty.\nspace
An example of this failure will be given in the next section (\cref{ex:joining_fails}).

\subsection{Cloning 2-cells}
\label{sec:cloning}
\label{sec:cloning_cells}

Given a 2-cell $c$ of a 2-complex $X$, \emph{cloning} $c$ means attaching~to~$X$~a~new 2-cell $c'$ with with the same boundary $\partial c'=\partial c$.

\begin{lemma}
    \label{res:cloning}
    \label{res:cloning_embeds}
    Cloning 2-cells preserves embeddability.
\end{lemma}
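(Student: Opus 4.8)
The plan is to mimic, in spirit, the ``create a patch inside a small ball'' technique used for \cref{res:joining_embeds}, but now the modification is entirely local to a neighborhood of a single point of the embedded 2-cell. Fix an embedding $\phi\:X\to\RR^4$, let $c$ be the 2-cell we want to clone, and pick a point $x$ in the relative interior of $c^\phi$. Choose a link neighborhood $B\subset\RR^4$ of $x$ that is so small that $B\cap X^\phi$ is exactly a flat 2-disc (the piece of $c^\phi$ through $x$), so that $\partial B\cap X^\phi$ is a circle $\rho$ lying on the 3-sphere $\partial B\cong\Sph^3$. The goal is to produce inside $B$ a second 2-disc with the same boundary circle $\rho$, disjoint from the first except along $\rho$, and then to ``spread'' this new disc out along $c^\phi$ to obtain the clone $c'$.

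The key geometric fact I would use is that a circle $\rho$ bounds (at least) two disjoint discs in the 3-ball $B$: one is $B\cap c^\phi$ itself, and one can take a second spanning disc that agrees with the first only on $\rho$ — for instance, push $B\cap c^\phi$ slightly to one side in the fourth coordinate direction near the center and join it up near $\rho$, or more simply observe that $\rho\subset\partial B$ is unknotted and bounds a disc on each side of a properly embedded surface; the cleanest formulation is that since $\rho\cong\Sph^1$ is PL-unknotted in $\partial B\cong\Sph^3$, the cone over $\rho$ with apex at a point slightly displaced from $x$ gives a second disc $D'$ with $\partial D'=\rho$ and $D'\cap c^\phi=\rho$. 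Now I need to globalize: take a collar neighborhood of $c^\phi$ inside $X^\phi$ — concretely, a thin regular neighborhood $N$ of $c^\phi$ in $\RR^4$ meeting $X^\phi$ only in $c^\phi$ (the 2-cell is properly embedded away from its boundary, and near $\partial c$ we can work cell by cell). Inside $N$, push a parallel copy $\tilde c$ of $c^\phi$ off of $c^\phi$ in a normal direction, keeping $\partial\tilde c=\partial c^\phi$ fixed; since $N$ retracts onto $c^\phi$ and $c^\phi$ is a disc, there is no obstruction to defining such a parallel copy that is disjoint from $X^\phi$ except along $\partial c$. Setting $c'=\tilde c$ gives the required embedding of $X$ with $c$ cloned.

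Actually the slickest route, which I would try to write up first, avoids the global collar and just iterates the local move: since cloning only changes the complex by adding one 2-cell with a prescribed boundary, and since the new 2-cell may be taken arbitrarily close to $c^\phi$, it suffices to find \emph{any} PL disc in $\RR^4$ with boundary $\partial c^\phi$ whose interior is disjoint from $X^\phi$. Take a regular neighborhood $B$ of $c^\phi$ in $\RR^4$; by the argument sketched in the collapsing lemma earlier in the paper, $B$ is a PL $4$-ball (as $c^\phi$ is collapsible), and $B\cap X^\phi=c^\phi$ by choosing $B$ thin enough (here one uses that $c$ is a cell, so away from $\partial c$ nothing else of $X^\phi$ is near $c^\phi$, and near $\partial c$ the regular neighborhood of the cell meets $X^\phi$ in the cone structure, which one handles by further shrinking). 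Identify $B$ with a convex body $K$ via a PL homeomorphism $\psi$; then $\psi(c^\phi)$ is a properly embedded disc in $K$ with $\psi(\partial c^\phi)\subset\partial K$, and I claim $\psi(\partial c^\phi)$ bounds a second disc in $K$ meeting $\psi(c^\phi)$ only in the boundary circle — e.g.\ the cone from a point of $\partial K$ over $\psi(\partial c^\phi)$, or a small normal pushoff. Pulling back gives the clone.

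The main obstacle I anticipate is the behavior \emph{near $\partial c$}: a regular neighborhood of $c^\phi$ in $\RR^4$ does \emph{not} meet $X^\phi$ only in $c^\phi$, because along $\partial c^\phi$ other 2-cells and edges of $X$ emanate, and the neighborhood swallows initial segments of them. So the honest statement is that I must construct the parallel copy $\tilde c$ so that it stays disjoint from these neighboring cells. The right way to handle this is to work relative to $\partial c$: choose the pushoff $\tilde c$ to coincide with $c^\phi$ on a collar of $\partial c$ inside $c^\phi$ — i.e.\ only push off in the ``deep interior'' — and take the displacement small enough that the pushed-off part stays within a tubular neighborhood of $c^\phi$ that is disjoint from $X^\phi-c$ except along $\partial c$ (such a neighborhood exists because $c^\phi\setminus(\text{collar of }\partial c)$ is a compact disc whose distance to the rest of $X^\phi$ is positive). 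This is exactly the kind of careful-but-routine PL transversality argument the paper elsewhere treats as standard, so I would state it with a picture and defer the $\epsilon$-bookkeeping to the regular neighborhood theory of \cite{rourke2012introduction}.
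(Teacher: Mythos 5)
There is a genuine gap, and it sits exactly at the point the paper flags as open. Both of your routes reduce to the claim that an arbitrary PL-embedded 2-cell $c^\phi\subset\RR^4$ admits a ``parallel copy'' $\tilde c$ rel boundary: a second PL disc with $\partial\tilde c=\partial c^\phi$, whose interior is disjoint from $c^\phi$ and from the rest of $X^\phi$, constructed \emph{without modifying the original embedding}. You justify this with ``$N$ retracts onto $c^\phi$ and $c^\phi$ is a disc, so there is no obstruction'' and later call it routine PL transversality. But this is a codimension-2 embedding problem: a PL embedding of a 2-cell in $\RR^4$ need not be locally flat (the link of an interior vertex can be a knotted circle in $\Sph^3$), so there is no normal bundle in which to take a pushoff, and a homotopy-theoretic ``no obstruction'' argument gives maps, not embeddings. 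The paper is explicit about this: right after the lemma it poses as an open question whether a clone can ever be embedded without modifying the original embedding, notes that the smooth-category normal-bundle argument (which is essentially what you are importing) does not transfer, and that the analogous statement is \emph{false} in the topological category. So your proposal, as written, silently resolves that open question; the burden of proof for the pushoff is the whole content, and it is missing. Two subsidiary steps also fail concretely: (a) a regular neighborhood of $c^\phi$ never meets $X^\phi$ only in $c^\phi$ when $\partial c$ carries other cells (you notice this), and the proposed repair --- letting $\tilde c$ \emph{coincide} with $c^\phi$ on a collar of $\partial c$ inside $c^\phi$ --- destroys injectivity of the resulting map of the cloned complex, since $c$ and $c'$ must meet only in $\partial c$; (b) in the convex model $K$, the cone from a boundary point over $\psi(\partial c^\phi)$ has no reason to miss the interior of the properly embedded (possibly knotted) disc $\psi(c^\phi)$.

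The paper's proof avoids the pushoff entirely, at the price of modifying the original embedding: it triangulates $c^\phi$ by \emph{linear} triangles, clones each flat triangle $\Delta$ as $\partial\sigma-\Delta$ for a small 3-simplex $\sigma$ meeting $X^\phi$ only in $\Delta$ (flatness makes this trivial), then joins the triangle clones along their shared edges using \cref{res:joining_embeds} (which locally re-patches the embedding near those edges), omitting the edges of a spanning forest $F$ of the interior 1-skeleton, and finally contracts the components of $F$ onto boundary vertices so that both the original disc and its clone reappear. If you want to salvage your approach, you would need either to prove the PL pushoff-rel-boundary statement (which would be a result of independent interest, answering the paper's question) or to follow the paper in allowing local modifications of $c^\phi$ along the interior edges, which is precisely what the triangulate--clone--join--contract scheme does.
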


\begin{proof}
    Let $\phi\:\cpx\to\RR^4$ be an embedding, and $\mathcal T$ a triangulation of $c^\phi$ such that each 2-cell of $\mathcal T$ coincides with the convex hull of a triple of points.

    If $\mathcal T$ consists of a single triangle $\Delta$ then a clone of $c$ can be embedded as follows: let $\sigma\subset\RR^4$ be a 3-dimensional simplex with $\sigma\cap X^\phi=\partial \sigma\cap X^\phi=\Delta$.
    Then $\partial\sigma- \Delta$ is an embedding of the clone $c'$.
    
    If $\mathcal T$ contains more than one triangle then let $G$ be the 1-skeleton of $\mathcal T$ and let $G'$ be the subgraph induced by $V(G)\setminus\partial c^\phi$.
    Let $F$ be a spanning forest of $G'$~and,\nls for each connected component $\tau\subseteq F$, choose an edge $e_\tau\in E(G)$ that~connects~$\tau$~to~a boundary vertex $v_{\tau}\in \partial c^\phi$.
    Now clone each triangle of $\mathcal T$ as above, and join the clones along~their shared edges that are not in $F$ using Lemma~\ref{res:joining}.
    Observe that $\Int(c^\phi)\setminus F$ is an open disk and that the above process created a clone of it. 
    Thus, contracting each connected component $\tau\subseteq F$ onto the corresponding boundary vertex $v_\tau\in\partial c^\phi$ turns these disks into embeddings of the 2-cell $c$ and a clone $c'$.
\end{proof}  

Let us point out some subtleties of this result.
First, the proof of \cref{res:cloning} makes~sig\-nificant use of the \PL\ structure and it is not clear how to translate it to, say,\nspace the~to\-pological category.
Second, to embed the clone our construction modifies the initial 2-cell, and it is not clear whether this can be avoided:

\begin{question}
   Let $X$ be an embedded 2-complex, and $c$ one of its 2-cells. Can a clone of $c$ always be embedded without modifying the original embedding of $X$?
\end{question}

For a smooth embedding one can always embed a clone in the normal bundle of a 2-cell without modifying the original.
In contrast, there are topological embeddings of disks that cannot be completed to embeddings of spheres \cite{daverman2006absence,daverman1973scarcity}, which shows that in the topological category cloning 2-cells will require modification (see also~the~dis\-cussion in \cite{MathOverflow406829}).
We are not aware of an answer in the \PL\ category.

Cloning is a powerful tool when combined 
with other operations on 2-cells such as joining:
one can first clone the involved 2-cells and then perform the other operation on the clones.
We will make use of this idea many times.
For example, we will now use it to show that joining 2-cells along a path of length two is not~guaranteed to preserve embeddability: 

\begin{example}
    \label{ex:joining}
    \label{ex:joining_fails}
    Recall that $\mathcal K_7-\Delta$ is embeddable  (\cref{ex:K_7-Delta}). We label its vertices as $x_1,...,x_7$, so that $x_1 x_2 x_3$ is the missing 2-cell. Clone the 2-cells $x_1x_2x_4$ and $x_2x_3x_4$, and join the clones along the shared edge $x_2x_4$. This creates a 2-cell $c$ bounded by $x_1x_2x_3x_4$, and preserves~embeddability. However, 
    joining $c$ and a clone of the 2-cell bounded by $x_3x_4x_1$ along the shared 2-path $x_3x_4x_1$ introduces a new 2-cell bounded by $x_1x_2x_3$. 
    We thereby created the~mis\-sing triangle, and thus the non-embeddable complex $\mathcal K_7$. 
\end{example}

\subsection{Application: complexes with few dominating vertices}
\label{sec:dominating_vertices}

De la Harpe \cite{MathOverflow19618} asked whether every 2-complex with a single vertex embeds in $\RR^4$, and an answer was found involving a difficult theorem of Stallings. The follo\-wing result strengthens this.
Its elementary proof was inspired by a discussion with \Tam. 

\begin{theorem} 
\label{thm vw}
\label{thm ops}
\label{res:all_2_cells_vw}
\label{res:2_dominating_vertices}
If there are two vertices $v,w\in \cpx$ so that each 2-cell $c\subseteq \cpx$~is~incident to at least one of them, then $\cpx$ can be embedded in $\RR^4$.
\end{theorem}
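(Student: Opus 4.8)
The plan is to reduce $\cpx$ to a particularly simple ``normal form'' using the operations from \cref{sec:operations}, and then embed that normal form directly. First I would observe that it suffices to treat the worst case: the complex $\cpx$ in which every pair of vertices among $V(\cpx)$ is joined by an edge, and every 2-cell incident to $v$ or $w$ is attached along a \emph{triangle} through $v$ or $w$; indeed, by cloning 2-cells (\cref{res:cloning_embeds}) and joining them along paths of length $\le 1$ (\cref{res:joining_embeds}) one can build any 2-cell whose boundary walk passes through $v$ (or through $w$) out of triangles through $v$ (resp.\ $w$), so if the ``triangulated'' complex embeds then so does $\cpx$. Concretely, a 2-cell $c$ incident to $v$, say with boundary walk $v\,u_1\,u_2\cdots u_k\,v$, is obtained from the triangles $vu_1u_2, vu_2u_3,\dots,vu_{k-1}u_k$ by repeated joining along the edges $vu_2,\dots,vu_{k-1}$; 2-cells incident to $w$ but not $v$ are handled symmetrically. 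After this reduction we may assume $\cpx$ is a subcomplex of the ``double star'' complex $\Xstar(v,w)$: on the complete graph $K_n$ on vertex set $V$, attach a 2-cell along every triangle containing $v$ and every triangle containing $w$.

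Next I would exhibit an explicit embedding of $\Xstar(v,w)$ in $\RR^4$ (equivalently in $\Sph^4$), which then restricts to an embedding of $\cpx$. Write $V\setminus\{v,w\}=\{u_1,\dots,u_m\}$. The star of $v$ together with the star of $w$ inside $K_n$ is, up to subdivision, the suspension $\Sigma\,\mathcal{P}$ of the ``path-like'' pattern on $u_1,\dots,u_m$ together with the edges among the $u_i$; more usefully, I would model things on the fact that the two stars are each cones. Here is the cleanest route: consider the simplicial complex $L$ on $\{u_1,\dots,u_m\}$ consisting of all vertices, all edges $u_iu_j$, but \emph{no} higher cells, i.e.\ $L$ is the 1-skeleton of the $(m-1)$-simplex; this is a $1$-complex and hence embeds in $\RR^3$, in fact in $\Sph^3$. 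Now the 2-cells of $\Xstar(v,w)$ through $v$ form the cone $v\star L$ and those through $w$ form the cone $w\star L$ (note no 2-cell of $\Xstar(v,w)$ uses both $v$ and $w$, since a triangle has only three vertices and $vw\in E$ would force the third vertex, but $vwu_i$ triangles are incident to both — so I must also include those; these $m$ triangles $vwu_i$ form the cone $w\star(v\star\{u_1,\dots,u_m\})$, i.e.\ they are the suspension of the $m$ points $u_i$ with suspension points $v,w$, which sits inside $v\star L\cup w\star L$ after adding the edge $vw$). So $\Xstar(v,w)=(v\star L)\cup(w\star L)\cup\{vw\}$, glued along $L\cup\{u_1,\dots,u_m\text{-to-}v,w\}$, and I claim this is homeomorphic to the \emph{suspension} $\Sigma(\text{cl})$ of a suitable space. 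In any case, $v\star L$ and $w\star L$ are each collapsible (cones), they meet exactly in $L$, and their union is the unreduced suspension $\Sigma L$ of $L$ with the edge $vw$ running between the two suspension points through the ``outside''. Since $L$ embeds in $\Sph^3$, its suspension $\Sigma L$ embeds in $\Sigma \Sph^3=\Sph^4$, and one further checks the extra edge $vw$ can be added (route it through a complementary region), giving the required embedding of $\Xstar(v,w)$ in $\Sph^4$, hence of $\cpx$ in $\RR^4$.

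Let me record the key steps in order: \textbf{(1)} reduce to the case that $\cpx$ is a subcomplex of $\Xstar(v,w)$ by cloning and joining 2-cells (the operations being valid by \cref{res:cloning_embeds} and \cref{res:joining_embeds}, and contracting the auxiliary edges by the contraction lemma); \textbf{(2)} decompose $\Xstar(v,w)$ as $(v\star L)\cup(w\star L)$ plus the edge $vw$, where $L$ is the graph $K_m$ on $V\setminus\{v,w\}$ regarded as a $1$-complex; \textbf{(3)} embed $L$ in $\Sph^3$ (trivial, since graphs embed in $\RR^3$); \textbf{(4)} take suspensions: $\Sigma L \hookrightarrow \Sigma \Sph^3 = \Sph^4$, which handles all 2-cells through $v$ and through $w$; \textbf{(5)} route the single extra edge $vw$ through a region of $\Sph^4\setminus \Sigma L$ joining the two suspension poles, and check the triangles $vwu_i$ — which I would instead preempt by folding the vertex-to-vertex edges $vu_i$, $wu_i$ and the edge $vw$ into $L$ before suspending, i.e.\ applying the construction to the graph $L^+$ on $V$ obtained from $K_n$ by deleting nothing, viewed appropriately.

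The main obstacle I expect is step \textbf{(5)} together with getting the suspension bookkeeping exactly right: the naive claim ``$\Xstar(v,w)=\Sigma L$'' is not literally true because of the triangles containing the edge $vw$ and the mutual edges $vu_i,wu_i$, so I need to be careful about \emph{which} $1$-complex to suspend. The clean fix is to let $L$ be the \emph{full subcomplex of $\cpx$ on $V\setminus\{v,w\}$ together with two cone points}: more precisely, realize that $\st(v)\cup\st(w)$ in $\Xstar(v,w)$ deformation retracts onto, and is in fact PL-homeomorphic to, the suspension of the graph $H := G_{\cpx}[V\setminus\{v,w\}] \cup \{vw\}$ with some 1-cells doubled — and since suspension of any graph (a $1$-complex, hence $\Sph^3$-embeddable) lands in $\Sph^4$, and doubling/cloning edges preserves embeddability by \cref{res:cloning_edges}, we are done. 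So the real content is the homeomorphism $\st(v)\cup\st(w)\cong \Sigma(\text{graph})$, which is elementary: both are built by coning a graph from two points. Once that identification is pinned down precisely, the embedding is immediate from ``graphs embed in $\RR^3$'' plus ``suspension adds one dimension'', and the reduction in step (1) finishes the theorem.
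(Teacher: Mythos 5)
Your overall strategy---reduce by cloning and joining to a universal ``all triangles of $K_n$ through $v$ or $w$'' complex, then embed that complex as a two-sided cone over a graph sitting in an equatorial $\Sph^3\subset\Sph^4$---is sound in outline and close in spirit to the paper's proof. (The paper embeds the 1-skeleton $G_X$ in $\RR^3\times\{0\}$, cones it into each open half-space from a \emph{new} apex, assembles each 2-cell through $v$ from the upper cone cells by cloning and joining along cone edges, and finally contracts the cone edge over $v$ onto $v$; symmetrically for $w$.) Your step (1) is fine modulo routine bookkeeping for multigraphs and for boundary walks that revisit vertices.

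The gap is exactly where you suspect it: the 2-cells incident to \emph{both} $v$ and $w$. Your universal complex must contain the triangles $vwu_i$ (a 2-cell of $X$ may be attached along precisely such a triangle, and it admits no decomposition into triangles through only one of $v,w$), and these cells contain both of your suspension points. Hence the proposed repair---that $\st(v)\cup\st(w)$ is PL-homeomorphic to the suspension of a graph---is false as stated: in a suspension $\Sigma H$ of a graph no 2-cell contains both poles, while here the $m$ cells $vwu_i$ do; moreover the graph ``$G_{X}[V\setminus\{v,w\}]\cup\{vw\}$'' is not well defined, since $vw$ is not an edge on $V\setminus\{v,w\}$. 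The join picture makes the obstruction visible: your universal complex is the 2-skeleton of $[vw]\star K_m$, and the natural join embedding lands in $\disc^1\star\Sph^3=\disc^5$, not in $\Sph^4$. The gap is repairable within your framework: subdivide $vw$ at a new vertex $p$ and observe that the subdivided complex is obtained from $\Sigma K_{m+1}$ (suspending the complete graph on $\{u_1,\dots,u_m,p\}$, which embeds in $\Sph^4$ since $K_{m+1}$ embeds in $\Sph^3$) by joining, for each $i$, the two cells $vpu_i$ and $wpu_i$ along the length-one path $pu_i$ and then discarding the now-free edges $pu_i$; all of these steps preserve embeddability by \cref{res:joining_embeds}. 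Alternatively---and this is how the paper avoids the issue entirely---cone over the \emph{whole} 1-skeleton, including $v$ and $w$, from a new apex in each half-space, and contract the cone edge over $v$ (resp.\ $w$) onto $v$ (resp.\ $w$) at the very end; then no 2-cell ever contains two apexes and no special case arises.
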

\begin{proof}
    Assume first that all 2-cells are incident to the vertex $v\in \cpx$.
    Fix an~embedding $\phi$ of the 1-skeleton $\skel\cpx$ into $\RR^3\times\{0\}\subset\RR^4$. 
    Let $C_G\subset \RR^3\times\RR_+$ be a cone over $G_X^{\smash{\phi}}$. 
    For each vertex $u$ and edge $e$ of $G$, let $C_u,C_e\subseteq C_G$ 
    denote the subcone over $u$ and $e$ respectively. 
    Then $C_G$ is a 2-complex with each $C_u, u\in V(G)$ an edge of $C_G$, and each $C_e, e\in E(G)$ a 2-cell.
    To~em\-bed a 2-cell $c\subseteq X$, we clone all 2-cells $C_e$, $e\subset\partial c$ 
    and join the clones along the edges $C_u$, $u\in\partial c\setminus v$ using \cref{res:joining_embeds}. 
    This results in a single 2-cell $\tilde c$ with boundary $\partial \tilde c=\partial c\cup C_v$.
    Hav\-ing~constructed $\tilde c$ for all 2-cells $c\subseteq \cpx$, we contract the edge $C_v$ onto $v$.
    This results in~an embedding of $X$.
    
    If each 2-cell is incident to one of the two vertices $v,w\in \cpx$, we proceed as above, except that we also embed a second cone $C_G'$ into $\RR^3\times\RR_-$. 
    We use $C_G$ to construct the 2-cells that contain $v$ 
    as above, and then use $C_G'$ to construct the remaining 2-cells, which contain $w$. 
\end{proof}
%



\begin{remark}
    If every 2-cell is incident to one of three vertices, then embeddability is no longer guaranteed. A  counterexample is the triple-cone over $K_5$ \cite{grunbaum1969imbeddings}. 
\end{remark}

\subsection{Rerouting 2-cells}
\label{sec:rerouting}

Let $\gamma_1,\gamma_2\subseteq G$ be \emph{parallel} paths, that is, with the same end-vertices.
By \emph{rerouting} a 2-cell $c$ with $\gamma_1\subset \partial c$ from $\gamma_1$ to $\gamma_2$ we mean replacing $c$ by a 2-cell $c'$  attached along $(\partial c -\gamma_1)\cup\gamma_2$ instead.




\begin{lemma}
    \label{res:rerouting}
    \label{res:rerouting_embeds}
    If $\gamma_1,\gamma_2$ are parallel paths and $\gamma_1$ is of length $\le 1$ (\ie\ it is a vertex or edge), then the following are equivalent:
    \begin{myenumerate}
        \item Rerouting a 2-cell from $\gamma_1$ to $\gamma_2$ yields an embeddable complex. 
        \item Attaching a 2-cell along $\gamma_1\cup\gamma_2$ yields an embeddable complex. 
    \end{myenumerate}
\end{lemma}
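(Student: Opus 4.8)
The plan is to prove both directions by the same local surgery idea already used in \cref{res:joining_embeds}: the operations \hitm{i} and \hitm{ii} differ only in a small ball around $\gamma_1$ (which is a single vertex or edge), and inside such a ball the two local pictures can be exchanged. Write $X_1$ for the complex obtained from the ambient complex $X_0$ (containing $c$ attached along $\partial c\supset\gamma_1$) by rerouting $c$ from $\gamma_1$ to $\gamma_2$, and write $X_2$ for the complex obtained from $X_0 - c$ by instead attaching a new 2-cell $d$ along the cycle $\gamma_1\cup\gamma_2$ (and keeping the original $\gamma_1$-edge/vertex in place). Note $X_1$ and $X_2$ have the same 2-cells except that $X_1$ has $c'$ (bounded by $(\partial c-\gamma_1)\cup\gamma_2$) while $X_2$ has both the original $c$ (bounded by $\partial c$) and $d$ (bounded by $\gamma_1\cup\gamma_2$).

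\textbf{\hitm{ii}$\Rightarrow$\hitm{i}.} Suppose $X_2$ is embeddable via $\phi$. First clone $c$ in $X_2$ (\cref{res:cloning_embeds}); now we have two copies $c,\tilde c$ both bounded by $\partial c$, plus $d$ bounded by $\gamma_1\cup\gamma_2$. Join $\tilde c$ and $d$ along $\gamma_1$, which has length $\le 1$, using \cref{res:joining_embeds}: this merges them into a single 2-cell bounded by $(\partial c-\gamma_1)\cup\gamma_2$, i.e.\ exactly $c'$. The resulting embeddable complex contains $X_1$ as a subcomplex (it has the original $c$ as well, but that only helps), so restricting the embedding gives an embedding of $X_1$, proving \hitm{i}.

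\textbf{\hitm{i}$\Rightarrow$\hitm{ii}.} Suppose $X_1$ is embeddable via $\phi$. Clone $c'$ (\cref{res:cloning_embeds}) to get $c',\tilde c'$, both bounded by $(\partial c-\gamma_1)\cup\gamma_2$. Now I want to join $\tilde c'$ to something along $\gamma_2$ to convert it into a 2-cell bounded by $\partial c$; but there is no 2-cell along $\gamma_1\cup\gamma_2$ yet. Instead the cleaner route is the local-surgery argument directly: fix an end-vertex $x$ of $\gamma_1^\phi$ and a link neighbourhood $B$ of $x$, with $\rho:=\partial B\cap (c')^\phi$ the arc of the link coming from $c'$. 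Because $X_1$ restricted to $B$ looks (near $x$) like finitely many arcs and disk-sectors meeting at $x$, and $\gamma_1$ is an edge or vertex, there is room in $\partial B$ to insert a filled square $Q$ with one edge glued to $\rho$ and one free edge, together with cones from the free edges to $x$, exactly as in the two cases of the proof of \cref{res:joining_embeds}; delete $B\cap (c')^\phi$ and replace it by this patch. Doing this at \emph{both} end-vertices (and contracting the middle of $\gamma_1^\phi$ if $\ell=1$, resp.\ doing nothing if $\ell=0$) converts $(c')^\phi$ into an embedded disk bounded by $\partial c$, namely $c$, while simultaneously the patches assemble into an embedded disk bounded by $\gamma_1\cup\gamma_2$, namely $d$. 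This produces an embedding of $X_2$.

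\textbf{Main obstacle.} The genuinely delicate point is the same one flagged after \cref{res:joining_embeds}: the construction of the patch square/triangle in $\partial B$ and the verification that it can be mapped into $\RR^4$ with no intersections beyond those forced by the boundary. This uses in an essential way that $\gamma_1$ has length $\le 1$, so that the link of $X_1$ at the relevant vertex is controlled enough to leave a free region of $\partial B$ for the patch; for $\gamma_1$ of length $\ge 2$ the intermediate vertices obstruct this and the equivalence should fail (cf.\ \cref{ex:joining_fails}). I would present the \hitm{ii}$\Rightarrow$\hitm{i} direction in full via cloning+joining (it is clean and cites only \cref{res:cloning_embeds,res:joining_embeds}), and for \hitm{i}$\Rightarrow$\hitm{ii} reuse verbatim the two-case link surgery from the proof of \cref{res:joining_embeds}, simply observing that the same patch that joins also \emph{separates off} a disk bounded by $\gamma_1\cup\gamma_2$.
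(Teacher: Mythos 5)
Your direction \itmto21 is correct and is essentially the paper's own argument: clone one of the two cells and join the clone with the other along $\gamma_1$, which is legitimate precisely because $\gamma_1$ has length $\le 1$ (\cref{res:cloning_embeds,res:joining_embeds}); whether you clone $c$ or the cell along $\gamma_1\cup\gamma_2$ is immaterial.

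The direction \itmto12 is where your proposal breaks down, in two ways. First, the version you set out to prove is too strong. You read \itm1 as ``the rerouted complex $X_1$ embeds'' and \itm2 as ``$X_2=X_0+d$ embeds, keeping the original $c$''; but then \itmto12 would in particular recover an embedding of the original complex $X_0$ from an embedding of $X_1$, which is false in general. Concretely, take $X_0=\mathcal K_7$, $c=c_{123}$, $\gamma_1=x_1x_2$, $\gamma_2=x_1x_7x_2$. The rerouted complex is $\mathcal K_7-\Delta$ plus a 2-cell along the 4-cycle with edges $x_2x_3,x_3x_1,x_1x_7,x_7x_2$, and this embeds: clone the cells $x_1x_3x_7$ and $x_2x_3x_7$ in $\mathcal K_7-\Delta$ and join the clones along $x_3x_7$ (\cf\ \cref{ex:K_7-Delta,ex:joining_fails}). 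Yet $X_2\supset\mathcal K_7$ does not embed. The lemma is therefore only tenable -- and is proved in the paper -- at the level of \emph{operations}: \itm1 is used as the statement that rerouting across $\gamma_1\to\gamma_2$ preserves embeddability, and the paper's \itmto12 is then a one-liner needing no new geometry: attach an auxiliary degenerate 2-cell $\tilde c$ whose boundary traverses $\gamma_1$ twice (this always preserves embeddability), and apply \itm1 to reroute one of the two traversals of $\tilde c$ from $\gamma_1$ to $\gamma_2$; the rerouted $\tilde c$ is exactly a 2-cell attached along $\gamma_1\cup\gamma_2$.

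Second, independently of the reading, the link surgery you propose cannot deliver what you claim. The patches live inside small link neighbourhoods of the end-vertices of $\gamma_1$, but the objects you want to create are global: the disk $d$ must have its boundary traverse all of $\gamma_2^\phi$, and converting $c'$ back into $c$ would require detaching $c'$ from the interior of $\gamma_2$ -- both happen far outside the surgery region, so the assertion that the patches ``assemble into an embedded disk bounded by $\gamma_1\cup\gamma_2$'' is unsupported. The situation is not analogous to the proof of \cref{res:joining_embeds}: there the two cells already share the short path $\gamma$, and only a neighbourhood of $\gamma$ is modified, whereas here the modification you need is along the (possibly long) path $\gamma_2$.
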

\begin{proof}

    We start with the implication \itmto12. Easily, attaching a new 2-cell $\tilde c$ along $\partial\tilde c=\gamma_1$ (if $\gamma_1$ is~an~edge then this means $\partial \tilde c$ tra\-verses $\gamma_1$ twice, once in each direction) preserves embedda\-bility. 
    Since rerouting from $\gamma_1$ to $\gamma_2$ preserved embeddability, we can reroute~a part of $\partial \tilde c$ attached along $\gamma_1$ to traverse $\gamma_2$ instead to create a 2-cell along $\gamma_1\cup\gamma_2$.
    
    To prove \itmto21,  let $\tilde c$ be a 2-cell attached along $\partial c=\gamma_1\cup\gamma_2$. 
    To reroute a 2-cell $c$ from $\gamma_1$ to $\gamma_2$, clone $\tilde c$ using \cref{res:cloning_embeds}, and join the clone with $c$ at $\gamma_1$. Since $\gamma_1$ is of length $\le 1$, this preserves embeddability by \cref{res:joining}.
\end{proof}

We remark that rerouting from a path of length two, even with a 2-cell along $\gamma_1\cup\gamma_2$, is \emph{not} guaranteed to preserve embeddability: 

\begin{example}
    \label{ex:rerouting}
    In $\mathcal K_7-\Delta$, with the notation of \cref{ex:joining_fails}, clone the 2-cells $x_1x_2x_4$ and $x_2x_3x_4$, and join the clones at the shared edge $x_2x_4$. This creates a 2-cell $c$ along $x_1x_2x_3x_4$ and preserves~embeddability.
    The paths $\gamma_1=x_1x_4x_3$ and~$\gamma_2=x_1x_2x_3$ now satisfy \cref{res:rerouting}~\itm2~(as~$c$ is attached along $\gamma_1\cup\gamma_2$).
    However, cloning the 2-cell along $x_1 x_3 x_4$ and rerouting the clone from $\gamma_1$ to $\gamma_2$ yields a 2-cell along $x_1x_2x_3$.
    We thereby recreated~the~mis\-sing triangle and thus the non-embeddable complex $\mathcal K_7$. 
\end{example}





\subsection{Collapsing 2-cells}
\label{sec:collapsing}


Given a 2-cell $c\subseteq X$, fix a decomposition $\partial c=\gamma_1\cup\gamma_2$ into parallel paths, as well as a homeomorphism $f\:\gamma_1\to\gamma_2$ that fixes end points.
\emph{Collapsing} a 2-cell $c$ is the operation of deleting $c$ and identifying the paths $\gamma_1$ and $\gamma_2$ along the homeomorphism $f$.

Collapsing can be interpreted as an extreme form of rerouting: we reroute every 2-cell from $\gamma_1$ to $\gamma_2$, even cells that are attached to only a part of $\gamma_1$. In contrast to rerouting single 2-cells, where embeddability is preserved only in few special cases, collapsing is always possible:

\begin{lemma}
    \label{res:collapsing}
    Collapsing 2-cells preserves embeddability.
\end{lemma}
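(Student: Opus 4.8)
The plan is to realize the collapse as a continuous ``squashing'' of a regular neighborhood of the 2-cell $c^\phi$ inside $\RR^4$, exactly mirroring the technique used for contraction in the proof of the lemma in \cref{sec:contractions}. Fix an embedding $\phi\:X\to\RR^4$, and the prescribed decomposition $\partial c=\gamma_1\cup\gamma_2$ with the boundary-fixing homeomorphism $f\:\gamma_1\to\gamma_2$. The key observation is that $c^\phi$ together with all the other 2-cells of $X^\phi$ attached to sub-paths of $\gamma_1$ (resp. $\gamma_2$) forms a \PL-subset of $\RR^4$, and the collapse is precisely the quotient map that folds $c^\phi$ onto an arc homeomorphic to $\gamma_1$, dragging the 2-cells attached to $\gamma_1$ over to coincide with those attached to $\gamma_2$ via $f$.

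First I would choose, as in the contraction proof, a triangulation of $\RR^4$ containing subcomplexes triangulating $X^\phi$, $c^\phi$, $\gamma_1^\phi$, and $\gamma_2^\phi$, and take $B\subset\RR^4$ to be a regular neighborhood of $c^\phi$ with respect to this triangulation. Since $c^\phi$ is a \PL\ disk, hence collapsible, $B$ is a \PL\ $4$-ball, and there is a \PL\ homeomorphism $\psi\:B\to K$ onto a convex set $K\subset\RR^4$. The next step is to build inside $K$ a \PL\ map that does the folding: concretely, realize the disk $\psi(c^\phi)\subset K$ as $I\times I$ with $I\times\{0\}$ corresponding to $\psi(\gamma_1^\phi)$ and $I\times\{1\}$ to $\psi(\gamma_2^\phi)$, choose a collar of $I\times I$ inside $K$ of the form $(I\times I)\times J$, and define on $(I\times I)\times J$ the map that sends $(s,t,r)$ to the point obtained by first projecting $(s,t)$ to $(s, g(t))$ for a suitable family of reparametrizations interpolating between $\mathrm{id}$ on $I\times\{0\}$ and the graph identification forced by $f$, and then re-embedding transversally using the $J$-coordinate so that sheets that previously met $I\times I$ along different horizontal slices are pulled apart in the extra direction. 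Outside this collar the map is the identity; one checks it is well-defined, continuous, \PL, and injective except that it identifies $\gamma_1^\phi$ with $\gamma_2^\phi$ along $f$ — i.e. it realizes the quotient defining the collapse. Transporting this back through $\psi^{-1}$ and extending by the identity on $\RR^4\setminus B$ yields a \PL\ map of $\RR^4$ whose restriction to $X^\phi$ is the collapse; its image is an embedded copy of the collapsed complex.

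The main obstacle I anticipate is making rigorous the claim that the 2-cells of $X^\phi$ attached to sub-paths of $\gamma_1$ and $\gamma_2$ can be simultaneously carried along without creating new self-intersections: a priori, many 2-cells may crowd around $\gamma_1^\phi$ and $\gamma_2^\phi$, and after folding some of them may be forced to overlap. The resolution is to use the regular-neighborhood structure more carefully — the link of $\gamma_1^\phi$ (away from its endpoints) in $X^\phi$ is a disjoint union of arcs (one per incident 2-cell plus the arc from $c^\phi$ itself), and similarly for $\gamma_2^\phi$; since we have one free transverse direction available in the collar, we can linearly order these arcs and use that direction to keep the images of the $\gamma_1$-side 2-cells disjoint from each other and interleaved cleanly with the (unmoved) $\gamma_2$-side 2-cells, the point being that in codimension $2$ there is always enough room to separate finitely many $2$-dimensional sheets meeting along a common $1$-dimensional locus. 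Alternatively — and this may be the cleanest writeup — one can avoid bundling everything into one map by iterating: collapsing a single edge of $\partial c$ at a time, reducing to the already-established contraction lemma together with \cref{res:rerouting_embeds}, since collapsing across a length-$\le 1$ piece is exactly a rerouting followed by a contraction, and induct on the combinatorial length of $\gamma_1$; I would present the direct neighborhood argument as the main proof and remark on the inductive reduction.
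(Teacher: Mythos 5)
Your main argument (the regular-neighborhood folding) has a genuine gap exactly at the point you flag and then wave away. In $\RR^4$ two $2$-dimensional sheets are \emph{not} separable by general position: $2+2=4$, so after any perturbation they still meet generically in isolated points, and there is no single ``free transverse direction'' available --- the normal directions to the disk $\psi(c^\phi)$ inside the $4$-ball are $2$-dimensional, so a collar of the form $(I\times I)\times J$ with $J$ an interval does not exist. The actual difficulty is that the $2$-cells attached along $\gamma_1$ must be dragged across the $2$-dimensional cell $c^\phi$ onto $\gamma_2^\phi$ while their traces on $\partial B$ stay fixed, and en route they can a priori hit the $\gamma_2$-side cells, the other $\gamma_1$-side cells, and whatever else of $X^\phi$ meets $B$, in isolated points that no genericity argument removes. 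Asserting that ``in codimension $2$ there is enough room'' is therefore not a proof; it is precisely the content that has to be established. The paper's \cref{ex:rerouting} is a warning sign here: dragging a single $2$-cell across a disk spanning two parallel length-two paths can already destroy embeddability, so any folding construction must exploit the specific structure of the collapse (all cells rerouted, the two paths identified, $c$ deleted) edge by edge rather than appeal to a general separation principle.

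Your fallback remark is essentially the paper's proof, and in my view it should be the proof, not a remark --- but as stated it is under-specified. The paper first subdivides $c$ into a chain of quadrilateral sub-cells $c_i$ by adding edges $\tilde e_i$ joining $v_i$ to $f(v_i)$, then contracts all the $\tilde e_i$ (the contraction lemma of \cref{sec:contractions} applies since these are edges with distinct endpoints); after this each $c_i$ is a $2$-cell attached along $e_i\cup f(e_i)$, which is exactly the hypothesis needed to invoke \cref{res:rerouting}, direction \itm2 $\Rightarrow$ \itm1, to reroute every $2$-cell from $e_i$ to $f(e_i)$; finally one deletes $c$ together with the edges $e_i$. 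Your phrase ``collapsing across a length-$\le 1$ piece is exactly a rerouting followed by a contraction'' skips the subdivision step that creates the auxiliary cells along $e_i\cup f(e_i)$, without which the rerouting lemma cannot be applied at all. With that step added, the inductive/edge-by-edge reduction is correct and coincides with the paper's argument.
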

\begin{proof}    
    Let $v_1,...,v_n\in \gamma_1$ be an enumeration of the vertices of $\gamma_1$ and let $e_i\subset\gamma_1$ be the edge between $v_i$ and $v_{i+1}$.
    We subdivide $c$ into a ``chain of 2-cells''~$c_2,...,c_{n-1}\subseteq c$ by introducing new edges $\tilde e_i\subset c$ that connect $v_i$ and $f(v_i)$.
    Then $\partial c_i = e_i\cup \tilde e_i \cup f(e_i)\cup \tilde e_{i-1}$.
    If we contract all $\tilde e_i$ then we have $\partial c_i=e_i\cup f(e_i)$.
    Using \cref{res:rerouting} \itmto21 we can reroute all 2-cells attached along $e_i$ to $f(e_i)$. 
    Finally, we delete $c$ including all $e_i$.
\end{proof}

For later use we also introduce collapsing cylinders: given a graph $H$, collapsing a cylinder $H\times[0,1]\subseteq X$ means deleting it and~identifying $H\times\{0\}$ with $H\times\{1\}$ in the obvious way.

\begin{corollary}
    \label{res:collapsing_cylinder}
    Collapsing cylinders preserves embeddability.
\end{corollary}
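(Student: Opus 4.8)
\emph{Proof proposal.} The plan is to perform the cylinder collapse one cell at a time, using only moves already shown to preserve embeddability: contractions of edges with distinct end-vertices (\cref{sec:contractions}) and collapses of 2-cells (\cref{res:collapsing}). This is legitimate because the identifications defining the collapse of $C:=H\times[0,1]\subseteq\cpx$ involve pairwise disjoint cells, so the order in which they are carried out is immaterial. Recall the (product) cell structure of $C$: for each $v\in V(H)$ there is a \emph{vertical edge} $\ell_v:=\{v\}\times[0,1]$, whose end-vertices $(v,0),(v,1)$ are distinct since $C$ embeds into $\cpx$; and for each $e=vw\in E(H)$ there are two \emph{horizontal edges} $e_0:=e\times\{0\}$ and $e_1:=e\times\{1\}$, and one square 2-cell $Q_e:=e\times[0,1]$ whose boundary walk reads $e_0\cdot\ell_w\cdot e_1^{-1}\cdot\ell_v^{-1}$. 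Collapsing $C$ amounts to deleting all $Q_e$ and all $\ell_v$ while identifying $e_0$ with $e_1$ and $(v,0)$ with $(v,1)$ for all $e$ and $v$; equivalently, it replaces $C$ inside $\cpx$ by a single copy of $H$.

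\emph{Step 1: contract the vertical edges.} Contract the edges $\ell_v$, $v\in V(H)$, one after another. Each is an edge of $\cpx$ with distinct end-vertices, hence its contraction preserves embeddability (\cref{sec:contractions}); and contracting one $\ell_v$ leaves the remaining vertical edges with distinct end-vertices, so the procedure is well defined. After Step~1 the vertical edges are gone, $(v,0)$ and $(v,1)$ have been identified for every $v$, and each square $Q_e$ has had its two vertical sides contracted to points, so that its boundary is now the length-two closed walk $e_0\cdot e_1^{-1}$; \ie\ $Q_e$ is a 2-cell attached along the pair of parallel edges $e_0,e_1$.

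\emph{Step 2: collapse the bigons.} For each $e\in E(H)$ in turn, collapse the 2-cell $Q_e$ with respect to the decomposition $\partial Q_e=\gamma_1\cup\gamma_2$ given by $\gamma_1:=e_0$ and $\gamma_2:=e_1$, together with the endpoint-fixing homeomorphism $e_0\to e_1$. By \cref{res:collapsing} this preserves embeddability, and by the definition of collapsing a 2-cell it deletes $Q_e$ and identifies $e_0$ with $e_1$. Since collapsing $Q_e$ affects neither the other squares $Q_{e'}$ nor their boundary edges, these collapses may be performed in succession for all $e\in E(H)$. The complex obtained after Steps~1 and~2 is precisely $\cpx$ with the cylinder $C$ collapsed, which is therefore embeddable.

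I do not expect a genuine obstacle; the single point requiring care is that the vertical edges must be contracted \emph{before} the squares are collapsed. For an uncollapsed square $Q_e$ the only decompositions of $\partial Q_e$ into parallel paths either pair the single edge $e_0$ with a path of length three, or pair two length-two paths each mixing a horizontal with a vertical edge --- and feeding either of these to \cref{res:collapsing} would subdivide $e_0$, or identify a horizontal edge with a vertical one, rather than performing the intended identification $e_0\sim e_1$. I would also remark that the argument uses nothing about $H$ beyond its being a graph, so loops and multiple edges in $H$ cause no difficulty.
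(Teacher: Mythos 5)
Your proposal is correct and is essentially the paper's own argument: contract the vertical edges $v\times[0,1]$ first (edge contractions preserve embeddability), then collapse each resulting bigon $e\times\{0\}\cup e\times\{1\}$ onto one side using \cref{res:collapsing}. The extra care you take about the order of operations and about distinct end-vertices is fine but does not change the substance.
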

\begin{proof}
   For each vertex $v\in H$ contract $v\times [0,1]$.
   This turns $H\times[0,1]$ into a number of 2-cells $c_e$ indexed by edges $e\subseteq H$ and bounded by $\partial c_e=(e\times\{0\})\cup(e\times\{1\})$.
    Subsequently collapse each 2-cell $c_e$ onto $e\times\{0\}\subset\partial c_e$.
\end{proof}

\subsection{Cloning and merging edges}
\label{sec:merging_edges}
\label{sec:cloning_edges}

We now define an edge-cloning operation, which comes in two variants, one for graphs, and one for 2-complexes.
By \emph{cloning} an edge $e\subseteq G$ of a graph we mean adding a new edge $e'$ parallel to $e$.
In a 2-complex $X$, \emph{cloning} $e$ clones $e$ in $G_X$, and additionally adds the following new 2-cells:
\begin{myenumerate}
    \item a new 2-cell $\tilde c$ with boundary $e\cup e'$, and
    \item for each 2-cell $c\subseteq X$ incident with  $e$, another 2-cell $c'$ with the same~attachment map as $c$ except that it traverses $e'$ instead of $e$.
\end{myenumerate}



\begin{corollary}
    \label{res:cloning_edges}
    Cloning an edge $e$ of a 2-complex preserves embeddability.
\end{corollary}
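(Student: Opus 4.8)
The plan is to construct an embedding of the cloned complex $X'$ out of a given embedding of $X$. Recall that $X'$ is obtained from $X$ by adjoining the parallel edge $e'$, the $2$-cell $\tilde c$ with $\partial\tilde c=e\cup e'$, and, for every $2$-cell $c$ of $X$ incident to $e$, a copy $c'$ whose attachment map agrees with that of $c$ except that every traversal of $e$ is replaced by a traversal of $e'$. The whole statement reduces to the single topological input
$(\ast)$: if $X$ is embeddable and $e'$ is a fresh edge parallel to an edge $e$ of $X$, then attaching a $2$-cell $\tilde c$ with $\partial\tilde c=e\cup e'$ keeps the complex embeddable. Note that $(\ast)$ is precisely statement \itm2 of \cref{res:rerouting} for the parallel paths $\gamma_1=e$ and $\gamma_2=e'$; however, \cref{res:rerouting} only asserts that \itm2 is \emph{equivalent} to \itm1, not that it holds, so $(\ast)$ must be established on its own.

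Granting $(\ast)$, the rest is bookkeeping. Start from an embedding of $X$ and use $(\ast)$ to adjoin $e'$ and $\tilde c$. Now fix a $2$-cell $c$ of $X$ incident to $e$ and single out one occurrence of $e$ in its attachment map. Clone $c$ and clone $\tilde c$ (two applications of \cref{res:cloning_embeds}), and join the two clones along their common edge $e$; since $e$ has length $1$, this preserves embeddability by \cref{res:joining_embeds}, and the resulting $2$-cell is attached along $\partial c$ with the chosen occurrence of $e$ replaced by $e'$. Iterating once per occurrence of $e$ in $\partial c$ produces the copy $c'$ while leaving $c$ and $\tilde c$ intact, because each join consumes only the two clones. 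Carrying this out for every $2$-cell incident to $e$ yields exactly $X'$, which is therefore embeddable; and if $e$ lies on no $2$-cell, then $X'=X\cup e'\cup\tilde c$ and $(\ast)$ already finishes the proof.

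It remains to prove $(\ast)$, which is the one place genuine topology enters. Given an embedding $\phi\colon X\to\RRf$ with $e^\phi$ a PL arc from $u^\phi$ to $v^\phi$ (possibly $u=v$), it suffices to produce an embedded disk $D\subset\RRf$ whose boundary is $e^\phi\cup A$ for some new arc $A$ from $u^\phi$ to $v^\phi$ with $D\cap X^\phi=e^\phi$; then $A$ realises $e'$ and $D$ realises $\tilde c$ (as $e'\cup\tilde c$ is a disk attached to $X$ along the arc $e\subset\partial\tilde c$). Near $e^\phi$ the set $X^\phi$ has the structure of an open book: the arc $e^\phi$ together with finitely many sheets (one per traversal of $e$ by a $2$-cell), each sheet crossing the normal $\Sph^2$ of $e^\phi$ in a single direction. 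As one moves along $e^\phi$ these directions sweep out a $1$-dimensional subset of the (three-dimensional) normal $\Sph^2$-bundle over $e^\phi$, whose complement admits a PL section $t\mapsto w(t)$; the two endpoints are handled by steering the corner of the emerging ribbon through the complement, inside $\Sph^3$, of the (one-dimensional) link of $X^\phi$ at that vertex. Taking $A$ to be the pushoff of $e^\phi$ along $\varepsilon\,w(t)$ with a profile vanishing at the endpoints, and $D$ the thin ribbon swept out between $e^\phi$ and $A$, gives for small $\varepsilon$ an embedded disk meeting $X^\phi$ only along $e^\phi$. This is the ``room in codimension two'' phenomenon already used implicitly in the proof of \cref{res:rerouting_embeds}.

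The main obstacle is precisely this construction of $D$ in $(\ast)$: one must rule out the open-book directions along $e^\phi$ conspiring to obstruct every transverse pushoff, which is where regular-neighbourhood theory (cf.\ \cite[Chapter 3]{rourke2012introduction}) is invoked; the remainder is a routine combination of the cloning and joining lemmas.
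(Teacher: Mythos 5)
Your proposal is correct and follows essentially the same route as the paper: embed a disk $D$ with $D\cap X^\phi=\partial D\cap X^\phi=e^\phi$ to realise $e'$ and $\tilde c$, then manufacture each copy $c'$ by cloning and rerouting/joining along $e$ via \cref{res:cloning_embeds}, \cref{res:joining_embeds} and \cref{res:rerouting}. The only difference is that you spell out the general-position construction of $D$ (which the paper asserts in one sentence) and the bookkeeping for multiple traversals of $e$; both are fine.
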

%
%
\begin{proof}
    Fix an embedding $\phi\: X\to\RR^4$.
    First, embed a disk $D\subset\RR^4$ with $D\cap X^\phi=\partial D\cap X^\phi=e^\phi$. 
    We can now interpret $\partial D- e^\phi$ as an embedding of $e'$ and $D$ as an embedding of a 2-cell $\tilde c$ with $\partial \tilde c=e\cup e'$. 
    For each 2-cell $c$ incident with $e$, we do the following: we clone $c$, and reroute the clone from $e$ to $e'$. This preserves embeddability by \cref{res:cloning,res:rerouting}.
\end{proof}

We can now confirm van der Holst's \cref{conj:cloning_edges} on 4-flat graphs:

\begin{corollary}
    \label{res:cloning_edges_4_flat}
    Cloning an edge $e$ of a graph preserves 4-flatness.
\end{corollary}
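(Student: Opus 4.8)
The goal is to derive \cref{res:cloning_edges_4_flat} from the complex-level statement \cref{res:cloning_edges}. The plan is to show that cloning an edge $e$ of a graph $G$ produces a graph $G'$ whose full complex $X(G')$ embeds whenever $X(G)$ does, so that 4-flatness is inherited.

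First I would set up the relationship between $X(G')$ and the complex obtained from $X(G)$ by the 2-complex cloning operation of \cref{sec:cloning_edges}. Let $e'$ be the new edge parallel to $e$, and let $X'$ be the 2-complex gotten by cloning $e$ in $X(G)$ (in the 2-complex sense): it adds the bigon 2-cell $\tilde c$ bounded by $e\cup e'$, and for every 2-cell $c$ of $X(G)$ through $e$ a rerouted copy $c'$ through $e'$. By \cref{res:cloning_edges}, $X'$ is embeddable. The key observation is then that $X(G')$ is obtained from $X'$ by attaching more 2-cells, so I cannot simply quote embeddability of $X'$; instead I must check that each additional 2-cell of $X(G')$ can be produced from the 2-cells already present using the embeddability-preserving operations (cloning, joining, rerouting, collapsing).

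The main step is a bijection-style argument on cycles. Every cycle of $G'$ is either (a) a cycle of $G$ not using $e$; (b) a cycle using $e$, which corresponds to a cycle of $G$ (replace $e'$ by $e$ if present), or uses $e'$ in place of $e$; or (c) the bigon $e\cup e'$. For type (a) the 2-cell is already in $X(G)\subseteq X'$. For the bigon, it is $\tilde c$. For type (b): a cycle $D'$ of $G'$ through $e'$ (and not $e$) is the reroute of the cycle $D\subseteq G$ through $e$ along the parallel paths $e$ and $e'$; since $\tilde c$ (bounded by $e\cup e'$) is present and embeddable, \cref{res:rerouting_embeds} (with $\gamma_1=e$ of length $1$, $\gamma_2=e'$, using that a 2-cell is attached along $\gamma_1\cup\gamma_2$) lets me realize the rerouted 2-cell; more carefully, I clone the 2-cell of $X(G)$ filling $D$ via \cref{res:cloning_embeds} and reroute the clone from $e$ to $e'$. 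The only genuinely new cycles of $G'$ are those that traverse \emph{both} $e$ and $e'$: such a cycle decomposes as $P_1\cup e\cup P_2\cup e'$ where $P_1,P_2$ are paths in $G$; but $P_1\cup e\cup P_2$ is a closed walk (in fact a cycle if $P_1,P_2$ are internally disjoint, which I can reduce to) bounding a 2-cell already present or constructible, and joining that 2-cell with $\tilde c$ along the edge $e$ via \cref{res:joining_embeds} produces the desired 2-cell bounded by $P_1\cup P_2\cup e'$; one more reroute/join along $e'$ handles the version through both. I would organize this as: build all of $X(G')$'s 2-cells one at a time from $X'$ by a finite sequence of clonings, rerToutings and joinings along paths of length $\le 1$, each step justified by \cref{res:cloning_embeds,res:rerouting_embeds,res:joining_embeds}.

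The hard part will be the bookkeeping in case (b)/(c): making sure that every cycle of $G'$ — including those passing through both $e$ and $e'$, and including non-simple subtleties when $e$ is a loop or there are pre-existing parallel edges — is accounted for by exactly one embeddability-preserving construction, and that these constructions do not interfere with one another (since, as noted after \cref{res:cloning}, our cloning modifies the original embedding). A clean way around interference is to observe that $X(G')$ is actually a subcomplex of the complex obtained from $X(G)$ by \emph{first} cloning the edge $e$ at the 2-complex level to get $X'$ and \emph{then} repeatedly joining/rerouting; alternatively, and perhaps most cleanly, one shows directly that $X(G')$ collapses (in the sense of \cref{res:collapsing}) onto a complex embeddable by the previous results, or that $X(G')$ embeds into $X'$ after a sequence of the allowed moves. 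I expect the write-up to be short once the correspondence of cycles is pinned down, with \cref{res:cloning_edges} doing essentially all the topological work and the rest being a combinatorial verification that no cycle of $G'$ escapes the list of handled cases.
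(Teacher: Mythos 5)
Your proposal is correct and follows the paper's route: reduce to \cref{res:cloning_edges} by comparing $X(G')$ with the complex $X'$ obtained from $X(G)$ by cloning $e$ at the 2-complex level. However, the complication you spend most of your effort on is vacuous: in a simple cycle every vertex has degree exactly two, so a cycle of $G'$ containing both $e$ and $e'$ must be the bigon $e\cup e'$ itself; consequently the cycles of $G'$ are exactly the cycles of $G$, their reroutes through $e'$, and the bigon --- precisely the 2-cells that the 2-complex cloning operation produces. Hence $X(G')=X'$ on the nose, no further joinings or reroutings (and no interference bookkeeping) are needed, and the proof collapses to the paper's one-line observation followed by an appeal to \cref{res:cloning_edges}.
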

\begin{proof}
    Let $G'$ be the graph obtained from $G$ by cloning the edge $e$.
    Let $X'$ be the complex obtained from $\Xany(G)$ by cloning the edge $e$.
    Observe that $\Xany(G')=X'$.\nls
    The claim then follows from \cref{res:cloning_edges}.
\end{proof}

We now consider the reverse operation of cloning: by \emph{merging} a pair of parallel edges $e_1,e_2\subseteq X$ we mean to identify them in the obvious way.

\begin{corollary}
    \label{res:merging}
    \label{res:merging_iff_filling}
    Given an embeddable 2-complex $X$ with parallel edges \mbox{$e_1,e_2\subset X$}, the following are equivalent: 
    \begin{myenumerate}
        \item
        The complex $X'$ obtained by merging $e_1$ and $e_2$ is embeddable.
        \item 
        The complex $X''$ obtained by attaching a 2-cell along $e_1\cup e_2$ is embeddable.
    \end{myenumerate}
\end{corollary}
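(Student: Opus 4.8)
The plan is to prove the two implications separately. The implication \itmto21 reduces immediately to collapsing a single 2-cell, while \itmto12 will be obtained by first re-creating the bigon using a disk construction (as in the proof of \cref{res:cloning_edges}) and then re-separating the former 2-cells with a sequence of reroutings.

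For \itmto21, write $\tilde c$ for the 2-cell of $X''$ with $\partial\tilde c=e_1\cup e_2$ (its attaching walk runs along $e_1$ and then back along $e_2$). The key observation is that \emph{merging $e_1$ and $e_2$ is exactly collapsing $\tilde c$}: with the decomposition $\partial\tilde c=e_1\cup e_2$ into parallel paths and a homeomorphism $e_1\to e_2$ fixing the endpoints, collapsing deletes $\tilde c$ and identifies $e_1$ with $e_2$, yielding precisely $X'$. Since $X''$ is embeddable, \cref{res:collapsing} gives that $X'$ is embeddable, and this direction is done.

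For \itmto12 I would start from an embedding $\phi$ of $X'$ and let $e$ be the edge obtained from the merge. As in the proof of \cref{res:cloning_edges}, embed a disk $D\subset\RR^4$ with $D\cap\embd{X'}=\partial D\cap\embd{X'}=e^\phi$; then $\partial D-e^\phi$ is a new edge parallel to $e$, which I rename $e_2$ (renaming $e$ to $e_1$), and $D$ becomes a 2-cell $\tilde c$ with $\partial\tilde c=e_1\cup e_2$. This re-creates the bigon of $X''$ but leaves all former 2-cells attached to $e_1$. It remains to re-separate them onto $e_1$ and $e_2$ according to the original attaching maps of $X$: each 2-cell $d$ of $X$ corresponds to a 2-cell $\bar d$ of $X'$ whose attaching walk is that of $d$ with every traversal of $e_1$ or $e_2$ replaced by $e=e_1$, and for each $d$ I would reroute, one subpath at a time, precisely those traversals of $e_1$ in $\partial\bar d$ that arose from traversals of the original $e_2$, moving them from $e_1$ to $e_2$. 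Each such rerouting is legitimate by \cref{res:rerouting}~\itmto21, since $\tilde c$ is a 2-cell along $e_1\cup e_2$ and $e_1$ has length $1$; and rerouting one $\bar d$ leaves $\tilde c$ and every other cell untouched, so the reroutings can be performed successively. After all of them, each $\bar d$ carries the attaching walk of the original $d$, so the complex is exactly $X''$, which is therefore embeddable.

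The only delicate point, and the one I would be most careful about, is the bookkeeping in this last step for a 2-cell $d$ of $X$ whose boundary traverses \emph{both} $e_1$ and $e_2$: the corresponding $\bar d$ then traverses $e$ twice, and only the single traversal originating from the old $e_2$ should be rerouted, and in the matching direction, the other being left on $e_1$. Keeping track of which occurrence is which (and of orientations) is essentially the whole content of turning this into a rigorous argument; the rerouting moves themselves are harmless, as \cref{res:rerouting} applies verbatim to a length-$1$ subpath regardless of what else $\partial\bar d$ does. Everything else is a direct appeal to \cref{res:collapsing}, to the disk construction from the proof of \cref{res:cloning_edges}, and to \cref{res:rerouting}, so I do not anticipate any further obstacle.
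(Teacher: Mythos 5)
Your proposal is correct and takes essentially the same route as the paper: the direction \itmto21 is identical (merging \emph{is} collapsing the bigon, so \cref{res:collapsing} applies), and for \itmto12 the paper simply clones the merged edge $e$ via \cref{res:cloning_edges} and observes that $X''$ sits inside the resulting complex as a subcomplex --- which is exactly the disk-plus-rerouting construction you carry out by hand. Your explicit bookkeeping for a 2-cell whose boundary traverses both $e_1$ and $e_2$ is a welcome refinement, since the paper's one-line subcomplex claim glosses over precisely that case.
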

\begin{proof}
    Let $e$ denote  the  edge of $X'$ resulting from merging.
    Then the complex obtained from $X'$~by cloning $e$ contains $X''$ as a subcomplex. 
    Therefore \itmto12 follows from \cref{res:cloning_edges}.
    Conversely, $X'$ is obtained from $X''$ by collapsing the 2-cell along $e_1\cup e_2$, onto  $e_1$ say. Therefore \itmto21 follows from \cref{res:collapsing}.
    %
    %
\end{proof}



\subsection{Stellifying cycles and $\boldsymbol{\DY}$-\trafos}
\label{sec:stellifying_cycles}
\label{sec:DY_trafos}

\Cref{res:merging_iff_filling} states that, from the point of view of embeddability, ``collapsing a 2-cycle'' is the same as~filling~it~by~a 2-cell.
In this section we investigate whether statements of this sort can be extended to longer cycles.

Given a cycle $C\subseteq G$ of length $\ell\ge 3$, the operation of \emph{stellifying} $C$ consists in replacing $C$ by~a~star (see \cref{fig:stellification}).
More precisely, let $C$ have vertices $v_1,...,v_\ell$ and edges $e_{ij}=v_iv_j$; to stellify~$C$~we remove the edges $e_{ij}$ and then add a new vertex $w$ as well as the edges $e_i:=w v_i$.
If $C\subseteq G_X$ is part of a 2-complex, then \emph{stellifying} additionally reroutes every 2-cell that traverses $e_{ij}$ to run along $e_i\cup e_j$ instead.

Let $\Gcirc$ be a graph containing a cycle $C$, and let $\Gstar$ be the graph obtained by stellifying $C$ in $\Gcirc$.
Similarly, let $\Xcirc$ be a 2-complex with 1-skeleton $\Gcirc$ that contains no 2-cell bounded by $C$, and let~$\Xstar$ be the complex obtained by stellifying $C$ in $\Xcirc$. Finally, let $\Xdisk$ be the complex obtained from $\Xcirc$ by attaching a 2-cell injectively along $C$.

\begin{figure}
    \centering
    \includegraphics[width=0.6\textwidth]{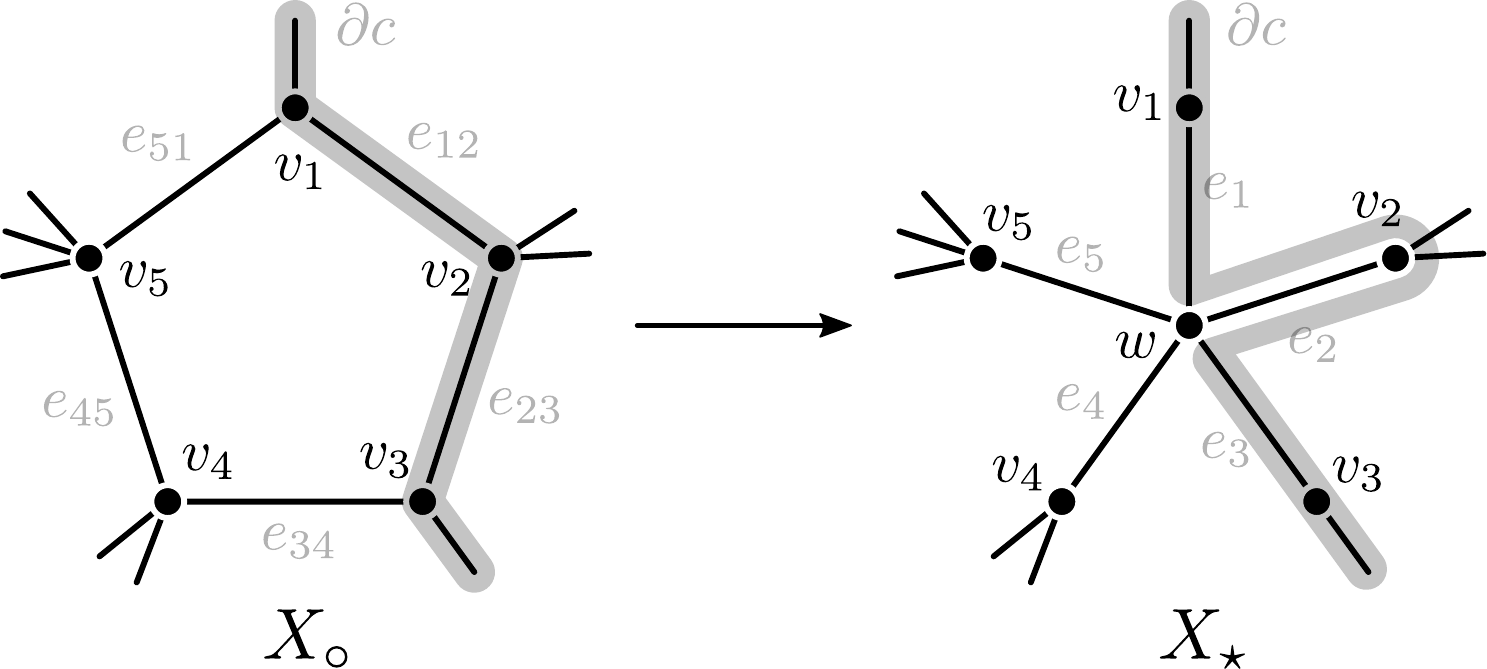}
    \caption{Visualization of stellification of a 5-cycle in a 2-complex. The highlighted paths show how an attachment map $\partial c$ gets modified in the process.}
    \label{fig:stellification}
\end{figure}



Generalizing \cref{res:merging} to longer cycles turns out to be non-trivial. The following conjecture goes in this direction:

\begin{conjecture}
    \label{conj:stellifying}
    If $\Xcirc$ is embeddable, then the following are equivalent:
    %
    \begin{myenumerate}
        \item $\Xstar$ is embeddable.
        \item $\Xdisk$ is embeddable.
    \end{myenumerate}
\end{conjecture}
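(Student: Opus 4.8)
The plan is to establish the two implications separately, using the toolkit of embeddability-preserving operations from Section~\ref{sec:operations}. The easy direction should be \itmto21. Suppose $\Xdisk$ is embeddable, so $\Xcirc$ with a 2-cell $c$ attached injectively along $C$ is embeddable. The idea is to mimic the proof of \cref{res:rerouting_embeds}: given any 2-cell of $\Xcirc$ traversing an edge $e_{ij}$ of $C$, we want to reroute it to run along the path $e_i\cup e_j$ through the new apex $w$. Concretely, I would first realize the star: the disk $c$ filling $C$ can be replaced by a star of edges $e_i = w v_i$ (coning $C$ from an interior point of $c$), so $\Xdisk$ collapses onto a complex containing $\Xcirc$'s 1-skeleton with $C$ replaced by the star, plus one 2-cell $c_{ij}$ bounded by $e_{ij}\cup e_i\cup e_j$ for each edge $e_{ij}$ of $C$ (the triangular ``pie slices'' of $c$). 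Now for each 2-cell $c'$ of $\Xcirc$ traversing $e_{ij}$, clone the relevant pie-slice 2-cell $c_{ij}$ (using \cref{res:cloning_embeds}) and join its clone to $c'$ along $e_{ij}$ (using \cref{res:joining_embeds}, since $e_{ij}$ has length~$1$); this reroutes $c'$ from $e_{ij}$ to $e_i\cup e_j$. Having done this for every such 2-cell and every edge of $C$, we delete the edges $e_{ij}$ and the leftover pie slices; what remains is exactly $\Xstar$, and embeddability was preserved at every step.

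The harder direction is \itmto12. Here I would reverse the above. Starting from an embedding of $\Xstar$, I want to reconstruct the disk filling $C$. The natural move is to attach a 2-cell $c_{ij}$ along each triangle $e_{ij}\cup e_i\cup e_j$ --- but $e_{ij}$ is not present in $\Xstar$, so instead attach a 2-cell along the 2-path $e_i\cup e_j$ (a ``digon'' with both ends at $v_i,v_j$, i.e.\ a 2-cell whose boundary traverses $e_i$ then $e_j$), which is harmless: attaching a 2-cell along a path and then... no, wait --- attaching a 2-cell along a closed walk of length $2$ that is not a cycle requires care. A cleaner route: add back the edge $e_{ij} = v_iv_j$ for each $i,j$, then attach a 2-cell $c_{ij}$ bounded by $e_{ij}\cup e_i \cup e_j$; since this 2-cell can be realized as a small 3-simplex boundary patch near the tree $e_i\cup e_j$ (as in the proof of \cref{res:cloning_embeds}, the single-triangle case), this preserves embeddability. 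Now, as in \itmto21, clone each $c_{ij}$ and join its clone to every 2-cell of $\Xstar$ traversing $e_i\cup e_j$, rerouting it onto $e_{ij}$; after deleting the edges $e_i$ (and the auxiliary apex $w$) and the leftover clones, the edges $e_{ij}$ reassemble the cycle $C$ and we recover $\Xcirc$ together with a 2-cell along $C$ --- almost. The subtlety is that we need a single 2-cell bounded exactly by $C$, and joining the $c_{ij}$'s along the spokes $e_i$ should produce it: joining $c_{12},c_{23},\dots,c_{\ell 1}$ successively along $e_2,e_3,\dots$ (each a length-$1$ path, so \cref{res:joining_embeds} applies) yields a single 2-cell bounded by $e_{12}\cup e_{23}\cup\cdots\cup e_{\ell 1} = C$. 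This gives $\Xdisk$.

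I expect the main obstacle to be bookkeeping rather than a genuine topological gap in either direction --- specifically, making precise the claim that after rerouting \emph{all} the relevant 2-cells off the edges of $C$ (resp.\ off the spokes $e_i$), the leftover auxiliary cells and edges can be deleted to leave exactly $\Xstar$ (resp.\ $\Xdisk$), including the requirement that stellification rewrites attachment maps that traverse $e_{ij}$ \emph{multiple times} or with \emph{both orientations}; the cloning-and-joining argument must be applied once per traversal, and one must check the result agrees with the combinatorial definition of stellifying a 2-complex. A further point needing care: for \itmto12 we assumed $\Xcirc$ has no 2-cell bounded by $C$, which is what lets us freely \emph{add} such a cell (via $\Xdisk$) without collision; and for \itmto21 we use that the spokes $e_i$ have distinct endpoints so that the joins and the final edge-contractions/collapses are legitimate. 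I would flag that the genuinely hard case --- where $C$ has length $\ell \ge 4$ and the equivalence may actually \emph{fail} --- is exactly the content of \cref{ex:reverse_stellifying_4_cycle_4_flat} referenced in the introduction, so this conjecture is stated, not proved, and a correct proof of either implication in full generality is not expected here; the above sketch establishes the two implications in the cases where the cloning/joining toolkit suffices (notably $\ell = 3$, i.e.\ \DYtrafos, recovering \cref{res:DY_YD}).
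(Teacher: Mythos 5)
Your direction \itmto21 is essentially correct and close to the paper's own argument (\cref{res:stellifying_ii_to_i}): both start by subdividing the filling disk into triangular pie slices $c_i$ with $\partial c_i=e_i\cup e_{ij}\cup e_j$. The paper then finishes in one stroke by \emph{collapsing} each $c_i$ onto $e_i\cup e_j$ (\cref{res:collapsing}), which reroutes every traversal of $e_{ij}$ (including multiple traversals and both orientations) automatically and deletes $e_{ij}$ by identification; your clone-and-join variant via \cref{res:cloning_embeds} and \cref{res:joining_embeds} (joins along the length-$1$ path $e_{ij}$) also works, at the cost of the per-traversal bookkeeping you flag, and is really just \cref{res:rerouting} \itmto21 applied slice by slice.

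The direction \itmto12, however, contains a genuine gap, and indeed this is exactly the implication the paper does \emph{not} prove: the statement is a conjecture, and the text after \cref{res:stellifying} explicitly says the authors are unable to prove \itmto12. Two steps of your sketch are unsupported. First, attaching a new edge $e_{ij}$ together with a 2-cell bounded by $e_{ij}\cup e_i\cup e_j$ is not an instance of \cref{res:cloning_embeds} (which clones an \emph{existing} 2-cell); what you need is an operation that turns the length-two path $e_i\cup e_j$ into a parallel edge with a filling cell, and no lemma of the paper licenses this --- it is essentially a path-version of the merging question (\cref{res:merging_iff_filling}) whose extension to longer paths is precisely what \cref{conj:stellifying} is about. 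Second, and decisively, rerouting the 2-cells of $\Xstar$ off $e_i\cup e_j$ onto $e_{ij}$ requires joining along the length-two path $e_i\cup e_j$; \cref{res:joining_embeds} only covers paths of length $\le 1$, \cref{res:rerouting} requires $\gamma_1$ of length $\le 1$, and \cref{ex:joining_fails} and \cref{ex:rerouting} show that these operations genuinely fail for length two (they can manufacture $\mathcal K_7$ out of $\mathcal K_7-\Delta$). Your closing claim that the sketch establishes both implications for $\ell=3$ also overstates the situation: for $\ell=3$ the implication \itmto12 is the \YD-direction, which remains open even with $\Xcirc$ embeddable, and \cref{sec:YD_counterexample} shows it fails once the embeddability hypothesis on $\Xcirc$ is dropped. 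So only \itmto21 should be claimed, which is what the paper proves.
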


\begin{lemma}
    \label{res:stellifying}
    \label{res:stellifying_ii_to_i}
    The implication  \itmto21 of \cref{conj:stellifying} holds.
\end{lemma}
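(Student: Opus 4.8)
I want to show that if $\Xdisk$ is embeddable then $\Xstar$ is embeddable. The idea is to realize the star subdivision of $C$ as a collapse of a cylinder-like region built over the disk that fills $C$, thereby reducing to the collapsing lemmas (\cref{res:collapsing} and especially \cref{res:collapsing_cylinder}) together with the rerouting machinery of \cref{res:rerouting_embeds}.

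\textbf{Step 1: cone off the filled cycle.} Start with an embedding $\phi$ of $\Xdisk$, and let $D\subseteq \Xdisk$ be the 2-cell attached injectively along $C$. Pick an interior point $w'$ of $D^\phi$ and let $S$ be the cone over $C^\phi$ with apex $w'$, living inside $D^\phi$. Since $C^\phi = \partial D^\phi$ bounds the disk $D^\phi$, this cone $S$ is itself a disk with boundary $C^\phi$, and $S$ lies within $\Xdisk^\phi$, meeting the rest of $\Xdisk-D$ only along $C^\phi$. Now replace the disk $D^\phi$ by the cone $S$: this is legitimate because $S$ and $D$ have the same boundary and $S$ is already inside the image. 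The new complex $\Xcirc'$ we obtain from $\Xdisk$ by replacing $D$ with this coned-off version has $C$ subdivided into a star with center $w'$ and spokes $e_i' = w' v_i$, together with the triangular 2-cells $\Delta_{ij} = w' v_i v_j$ (one for each edge $e_{ij}$ of $C$) forming the cone $S$. Crucially, $\Xcirc'$ is embeddable, and its 1-skeleton is $\Gstar$ with $C$ replaced by the star, plus the extra edges $e_{ij}$ are \emph{gone} --- except they are not quite gone: $\Xcirc$ may have had 2-cells traversing the $e_{ij}$, and those 2-cells are still attached along the $e_{ij}$ in $\Xcirc'$.

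\textbf{Step 2: reroute the old 2-cells.} For each edge $e_{ij}$ of $C$, the triangle $\Delta_{ij}$ is a 2-cell attached along $e_{ij} \cup (e_i' \cup e_j')$, i.e.\ along $e_{ij}$ and the length-two path $e_i' \cup e_j'$. But $e_{ij}$ has length $1$, so \cref{res:rerouting_embeds} (using implication \itm2$\Rightarrow$\itm1 with $\gamma_1 = e_{ij}$, $\gamma_2 = e_i' \cup e_j'$) applies: every 2-cell of $\Xcirc'$ that traverses $e_{ij}$ can be rerouted to traverse $e_i' \cup e_j'$ instead, preserving embeddability. Do this for all edges $e_{ij}$ of $C$ in turn; note that after rerouting the 2-cells off $e_{ij}$, the triangle $\Delta_{ij}$ itself is the only 2-cell left on $e_{ij}$, so we can also reroute $\Delta_{ij}$'s side along $e_{ij}$ to $e_i'\cup e_j'$ --- or more simply, delete $\Delta_{ij}$ together with the now-free edge $e_{ij}$. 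After processing all edges of $C$, every 2-cell that originally traversed some $e_{ij}$ in $\Xcirc$ now traverses $e_i' \cup e_j'$ instead, all the edges $e_{ij}$ have been removed, and we are left precisely with $\Xstar$ (identifying $w'$ with the new vertex $w$ and $e_i'$ with $e_i$). Since every step preserved embeddability, $\Xstar$ is embeddable.

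\textbf{The main obstacle.} The delicate point is Step 1: making precise that replacing $D^\phi$ by the cone $S$ with apex at an interior point yields a genuine PL embedding of the coned-off complex, and that $S$ meets the rest of $\Xdisk^\phi$ only in $C^\phi$. This is essentially the standard fact that a filled PL disk can be re-triangulated as a cone from an interior point without creating new intersections (one can take the apex in general position or use the fact that $D^\phi$ is a disk and cone from its barycenter after a suitable subdivision); it is the same circle of ideas used in the contraction lemma and in \cref{res:cloning_embeds}. The bookkeeping in Step 2 --- checking that rerouting the finitely many 2-cells off the $e_{ij}$ one at a time is consistent and terminates with exactly $\Xstar$ --- is routine but needs to be stated carefully, since the hypothesis that $\Xcirc$ contains no 2-cell bounded by $C$ is used to ensure no 2-cell of $\Xcirc'$ other than the $\Delta_{ij}$ wraps all the way around the star.
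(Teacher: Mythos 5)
Your proposal is correct and follows essentially the same route as the paper: subdivide the filling disk by an interior apex $w$ with spokes to the $v_i$, then transfer every 2-cell from each boundary edge $e_{ij}$ onto the spoke pair $e_i\cup e_j$ and discard the triangles and the edges $e_{ij}$. The paper compresses your Step 2 into a single invocation of \cref{res:collapsing} (collapsing each sub-cell onto $e_i\cup e_j$), whose proof performs exactly the rerouting via \cref{res:rerouting_embeds} that you carry out by hand, and your Step 1 is just the paper's subdivision of the 2-cell, as your closing paragraph correctly acknowledges.
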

\begin{proof}
    Let $c\subseteq \Xdisk$ be the 2-cell with $\partial c=C$. We subdivide $c$ by adding a vertex $w$ in its interior as well as edges $e_i$ connecting $w$ to $v_i$.
    Let $c_i\subset c$ be the sub-cell~with $\partial c_i=e_i\cup e_{ij}\cup e_j$. Stellifying $C$ is then achieved by collapsing each $c_i$ onto $e_i\cup e_j$.
    This preserves embeddability by \cref{res:collapsing}.
\end{proof}

So far we are unable to prove the other direction \itmto12.
We also emphasize an important difference to the 2-cycle version (\cref{res:merging}): in \cref{conj:stellifying} we need to assume the embeddability of $\Xcirc$, as otherwise the direction \itmto12 is not true.
An example of this failure is given in \cref{sec:YD_counterexample}.
For a potential~coun\-terexample to \cref{conj:stellifying} see \cref{q:J3n}.



A particularly common instance of stellification is $\ell=3$, in which case the~oper\-ation is known as a \emph{\DYtrafo}. Using this terminology, \cref{res:stellifying} reads

\begin{corollary}
    \label{res:DY_YD}
    Applying a \DYtrafo\ on a 2-complex preserves embeddability.
\end{corollary}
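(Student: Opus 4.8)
The plan is to derive \cref{res:DY_YD} directly from \cref{res:stellifying_ii_to_i} by observing that a \DYtrafo\ is nothing but stellification at a $3$-cycle, and that the extra hypotheses required by \cref{res:stellifying_ii_to_i} are automatically met in this special case. Concretely, suppose $X$ is an embeddable $2$-complex and we wish to perform a \DYtrafo\ on a triangle $C = v_1v_2v_3 \subseteq G_X$. The subtlety is the standing assumption in the definition of $\Xstar$ that $\Xcirc$ contains no $2$-cell bounded by $C$. If $X$ already has such a $2$-cell, that cell is precisely a copy of the disk $\Xdisk$ attached along $C$, so $X$ itself plays the role of $\Xdisk$ and we may apply \cref{res:stellifying_ii_to_i} verbatim. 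If $X$ has no $2$-cell bounded by $C$, then $X = \Xcirc$, and since $X$ is embeddable we first pass to $\Xdisk$ by attaching a single $2$-cell along the triangle $C$.

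The one point that needs a sentence of justification is that $\Xdisk$ is still embeddable, i.e.\ that attaching a $2$-cell along a triangle in an embedded complex does not destroy embeddability. This is easy: fix an embedding $\phi$ of $X$, take the geometric triangle spanned by the three images $\phi(v_1),\phi(v_2),\phi(v_3)$ in a generic $2$-plane through them, and push it into a fourth dimension slightly so that it meets $X^\phi$ only along $C^\phi$; more carefully, since $C^\phi$ is a PL circle one can take a small cone over $C^\phi$ with apex a new point placed in a direction transverse to $X^\phi$, which gives an embedded disk filling $C$ and disjoint from the rest of $X^\phi$. (Equivalently, this is exactly the construction used in the proof of \cref{res:cloning_edges}, applied to fill a cycle rather than a doubled edge.) Thus $\Xdisk$ is embeddable.

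Now \cref{res:stellifying_ii_to_i}, the implication \itmto21 of \cref{conj:stellifying}, applies with $\ell = 3$: from embeddability of $\Xdisk$ we conclude embeddability of $\Xstar$, which by definition is the complex obtained from $\Xcirc$ by stellifying $C$ — and stellification at a $3$-cycle is precisely the \DYtrafo. In the first case above ($X$ already contained the triangular $2$-cell) the resulting $\Xstar$ is the \DYtrafo\ of $X$; in the second case, $\Xstar$ is the \DYtrafo\ of $\Xcirc = X$ together with the extra filling $2$-cell, but that extra cell is now bounded by the star $e_1\cup e_2\cup e_3$, and we may simply delete it (removing a $2$-cell never affects embeddability) to obtain exactly the \DYtrafo\ of $X$. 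Either way the \DYtrafo\ of $X$ is embeddable.

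I do not anticipate a genuine obstacle here: all the real work is in \cref{res:stellifying_ii_to_i} (via \cref{res:collapsing}), and this corollary is a bookkeeping exercise in checking that the hypotheses line up. The only thing to be careful about is the definition-chasing around whether a triangular $2$-cell bounding $C$ is already present, and to state explicitly that filling a cycle and deleting a $2$-cell each preserve embeddability, so that the reduction to \cref{res:stellifying_ii_to_i} is clean.
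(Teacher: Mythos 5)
Your Case 1 (the triangle of $X$ is already filled by a 2-cell, so $X$ plays the role of $\Xdisk$ and \cref{res:stellifying_ii_to_i} applies directly) is exactly the paper's argument: the corollary is nothing more than \cref{res:stellifying} specialized to $\ell=3$, and this filled case is the only one the paper intends and the only one it uses (in \cref{res:DY_YD_4_flat} the complex is $\Xany(G_\Delta)$, which does contain the triangular 2-cell). Had you stopped there, the proof would be fine.

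Your Case 2, however, contains a genuine error. You claim that attaching a 2-cell along a triangle $C$ of an embedded complex preserves embeddability, by taking a small cone over $C^\phi$ transverse to $X^\phi$. This is false, and the paper itself supplies the counterexample: $\mathcal K_7-\Delta$ is embeddable (\cref{ex:K_7-Delta}) but attaching the missing 2-cell along $x_1x_2x_3$ yields $\mathcal K_7$, which is not. The cone construction fails because in $\RR^4$ a 2-dimensional cone over a \emph{circle} generically meets the 2-dimensional set $X^\phi$ in isolated points that cannot always be pushed off; concretely, in the join embedding $K_3\star\mathcal K_4$ the circle $K_3$ links the 2-sphere $\mathcal K_4$, so \emph{every} disk filling it must intersect $\mathcal K_4$ (this is exactly why \cref{ex:K7_almost_embeds} only produces an almost-embedding). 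The analogy you draw with the proof of \cref{res:cloning_edges} does not transfer: there the disk $D$ is attached along an \emph{arc} $e^\phi$, which is unknotted and unlinked, whereas a cycle need not be. Note also that if your Case 2 argument were valid it would prove that filling any cycle preserves embeddability, contradicting \cref{ex:joining_fails}. Whether a \DYtrafo{} at an \emph{unfilled} triangle preserves embeddability is a genuinely different question --- it does not follow from \cref{res:stellifying_ii_to_i}, and under \cref{conj:stellifying} it would in fact fail for $\mathcal K_7-\Delta$ at its missing triangle. So restrict the statement (as the paper implicitly does) to the situation $\Xdisk\to\Xstar$, and drop Case 2.
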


We next work towards van der Holst's \cref{conj:DY_YD} for 4-flat graphs. 
Suppose now that the cycle $C$ in the definition of $\Gcirc$ is a triangle, and let~us~use the notation $G_\Delta$ and $G_\Y$ (and $X_\Delta$ and $X_\Y$ accordingly) instead of $\Gcirc$ and $\Gstar$ to emphasize this.
We make the following observation, which follows from the definitions:

\begin{observation}
    \label{res:XY_eq_Xany_of_GY}
    If $X_\Delta:=\Xany(G_\Delta)$, then $X_\Y=\Xany(G_\Y)$.
    \end{observation}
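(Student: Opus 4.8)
The plan is to check separately that $X_\Y$ and $X(G_\Y)$ agree on their $1$-skeleton and on their set of $2$-cells. The $1$-skeleton is immediate: stellifying the triangle $C=v_1v_2v_3$ deletes its three edges $e_{12},e_{23},e_{31}$, adds a new vertex $w$, and adds the three spokes $e_i:=wv_i$, which is exactly the graph-level \DYtrafo; hence the $1$-skeleton of $X_\Y$ is $G_\Y$, as is the $1$-skeleton of $X(G_\Y)$.

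For the $2$-cells I would first set up a correspondence between cycles. Since $G_\Y-w$ is obtained from $G_\Delta$ by deleting precisely $e_{12},e_{23},e_{31}$, a cycle of $G_\Y$ avoiding $w$ is literally a cycle of $G_\Delta$ using none of the three edges of $C$, and stellification changes neither the cycle nor the $2$-cell attached along it. A cycle of $G_\Y$ through $w$ uses exactly two spokes $e_i,e_j$, so it has the form $e_i\cup e_j\cup P$ with $P$ a path of $G_\Delta$ from $v_i$ to $v_j$ using none of $e_{12},e_{23},e_{31}$; replacing $e_i\cup e_j$ by the single edge $e_{ij}$ produces a cycle $e_{ij}\cup P$ of $G_\Delta$ meeting $C$ in exactly the one edge $e_{ij}$, and $e_i\cup e_j\cup P\leftrightarrow e_{ij}\cup P$ is a bijection between cycles of $G_\Y$ through $w$ and cycles of $G_\Delta$ meeting $C$ in exactly one edge. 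Since stellification is defined by rerouting each $2$-cell traversing $e_{ij}$ to run along $e_i\cup e_j$ instead, the $2$-cell of $X(G_\Delta)$ along $e_{ij}\cup P$ is carried precisely onto the $2$-cell of $X(G_\Y)$ along $e_i\cup e_j\cup P$; together with the cells along cycles disjoint from $C$ this yields every $2$-cell of $X(G_\Y)$, each being the image of a unique $2$-cell of $X(G_\Delta)$.

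The step requiring care --- and the main obstacle --- is a $2$-cell of $X(G_\Delta)$ attached along a cycle $D$ of $G_\Delta$ that uses two or three edges of $C$. Such a $D$ contains a subpath $v_i\,v_j\,v_k$, which stellification reroutes to the closed walk $v_i\,w\,v_j\,w\,v_k$ traversing the spoke $e_j$ twice; this is not a cycle of $G_\Y$, so the corresponding $2$-cell is not among the $2$-cells of $X(G_\Y)$. To read the Observation as a literal equality one must therefore take $X_\Delta$ to be $X(G_\Delta)$ with the $2$-cell bounded by $C$ (and those along cycles meeting $C$ in two edges) removed; deleting the cell along $C$ is precisely the standing hypothesis that $\Xcirc$ carries no $2$-cell bounded by $C$. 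For the single use of the Observation, deducing that \DYtrafos\ preserve $4$-flatness (\cref{res:DY_YD_4_flat}), it is anyway enough that $X(G_\Y)$ is a subcomplex of $X_\Y$, which the previous paragraph establishes even for $X_\Delta=X(G_\Delta)$, so that embeddability of $X(G_\Delta)$ descends to $X(G_\Y)$ via \cref{res:DY_YD}.
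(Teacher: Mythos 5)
Your proof is correct, and it is in fact more careful than the paper, which offers no argument for this Observation at all beyond asserting that it ``follows from the definitions''; the only subtlety the authors flag afterwards is the failure of the analogous statement for cycles of length $\ge 4$, which concerns the opposite inclusion to the one you worry about. Your case analysis --- cycles of $G_\Y$ avoiding $w$ matched with cycles of $G_\Delta$ disjoint from the edges of $C$, and cycles of $G_\Y$ through $w$ matched bijectively (via $e_i\cup e_j\leftrightarrow e_{ij}$) with cycles of $G_\Delta$ meeting $C$ in exactly one edge --- is precisely the content that makes the observation work for triangles and fail for longer cycles.

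The caveat you raise in your last paragraph is genuine. A cycle of $G_\Delta$ meeting $C$ in two edges (e.g.\ the $4$-cycle $v_1v_2v_3u$ when $G_\Delta=K_4$) is rerouted by stellification to a closed walk traversing a spoke twice, so the stellification of $\Xreg(G_\Delta)$ contains non-regular $2$-cells that are absent from $\Xreg(G_\Y)$; the standing hypothesis that $\Xcirc$ carries no $2$-cell bounded by $C$ removes only the cell along $C$ itself, not these. Hence the stated identity is not a literal equality of CW complexes under the default reading $\Xany=\Xreg$, and your resolution is the right one: what holds cleanly, and what the sole application (\cref{res:DY_YD_4_flat}) actually uses, is that $\Xany(G_\Y)$ is a subcomplex of $X_\Y$, while the extra cells of $X_\Y$ are attached along closed walks of $G_\Y$ and are therefore harmless for embeddability (compare \cref{res:variants_equal}). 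With that inclusion in hand, embeddability descends from $\Xany(G_\Delta)$ to $\Xany(G_\Y)$ via \cref{res:DY_YD} exactly as you say.
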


This statement~is~ more subtle than one may think at first sight.
 Indeed, the analogous statement is incorrect for  $|C| \ge 4$:
    while in $\Xany(\Gstar)$ there is a 2-cell that traverses $e_1\cup e_3$, there is no such 2-cell in $\Xstar$.

The \DY-part of van der Holst's \cref{conj:DY_YD} follows immediately:


\begin{corollary}
    \label{res:DY_YD_4_flat}
    Performing a \DYtrafo\ on a graph preserves 4-flatness.
\end{corollary}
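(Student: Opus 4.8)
The plan is to deduce \cref{res:DY_YD_4_flat} directly from \cref{res:DY_YD} together with \cref{res:XY_eq_Xany_of_GY}. The key point is that a \DYtrafo\ on a \emph{graph} corresponds, at the level of full complexes, exactly to a \DYtrafo\ on a \emph{2-complex}, so that the general-complex statement transfers without loss.

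Concretely, I would argue as follows. Suppose $G_\Delta$ is a 4-flat graph, and let $G_\Y$ be obtained from $G_\Delta$ by a \DYtrafo\ at some triangle $C$. By \cref{def:4_flat}, the full complex $X_\Delta := \Xany(G_\Delta)$ is embeddable. Since $C$ is a triangle and $\Xany(G_\Delta)$ contains a $2$-cell along every cycle of $G_\Delta$ --- in particular along $C$ --- we are precisely in the situation of the stellification setup with $\ell=3$: $X_\Delta$ plays the role of $\Xdisk$ (the complex with a $2$-cell filling $C$), and the complex $X_\Y$ obtained by stellifying $C$ in $X_\Delta - c$ (where $c$ is the $2$-cell bounded by $C$) is embeddable by \cref{res:DY_YD}. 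It then remains to identify this $X_\Y$ with the full complex $\Xany(G_\Y)$, which is exactly the content of \cref{res:XY_eq_Xany_of_GY}. Hence $\Xany(G_\Y)$ is embeddable, i.e. $G_\Y$ is 4-flat.

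I should be a little careful about the bookkeeping of which $2$-cell gets filled and which gets removed: \cref{res:DY_YD} (via \cref{res:stellifying}) takes as input a complex $\Xdisk$ with a $2$-cell along $C$ and outputs the stellification $\Xstar$ of the complex $\Xcirc = \Xdisk - c$; so I want to start from $\Xany(G_\Delta)$, note that removing its $2$-cell along $C$ gives a valid $\Xcirc$ with $1$-skeleton $G_\Delta$ and no $2$-cell bounded by $C$, apply \cref{res:DY_YD}, and then invoke \cref{res:XY_eq_Xany_of_GY} to recognise the result as $\Xany(G_\Y)$. The main (and essentially only) obstacle is ensuring that \cref{res:XY_eq_Xany_of_GY} really does apply verbatim here, i.e. that the stellification of $\Xany(G_\Delta)$ at a triangle, after deleting the filling $2$-cell, is literally $\Xany(G_\Y)$ and not merely a subcomplex of it; but this is precisely what \cref{res:XY_eq_Xany_of_GY} asserts, and the contrast with the $\ell \ge 4$ case explained right after that observation shows why the triangle hypothesis is exactly what makes the argument go through. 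The proof is therefore just a two-line combination of the two cited results.

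\begin{proof}
    Let $G_\Y$ be obtained from the 4-flat graph $G_\Delta$ by a \DYtrafo\ at a triangle $C$, and set $X_\Delta := \Xany(G_\Delta)$, which is embeddable by definition. Let $c\subseteq X_\Delta$ be the 2-cell bounded by $C$, so $X_\Delta$ is the complex $\Xdisk$ obtained from $\Xcirc := X_\Delta - c$ by filling $C$. By \cref{res:DY_YD} (the $\ell=3$ case of \cref{res:stellifying}), stellifying $C$ in $\Xcirc$ yields an embeddable complex $X_\Y$. By \cref{res:XY_eq_Xany_of_GY}, $X_\Y = \Xany(G_\Y)$. Hence the full complex of $G_\Y$ is embeddable, i.e. $G_\Y$ is 4-flat.
\end{proof}
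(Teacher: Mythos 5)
Your proof is correct and is essentially the paper's own argument: both set $X_\Delta:=\Xany(G_\Delta)$, invoke \cref{res:DY_YD} (the $\ell=3$ case of \cref{res:stellifying}) to embed the stellified complex, and use \cref{res:XY_eq_Xany_of_GY} to identify it with $\Xany(G_\Y)$. Your extra care about which complex plays the role of $\Xdisk$ versus $\Xcirc$ is just a more explicit spelling-out of the same two-step argument.
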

\begin{proof}
    Set $X_\Delta:=\Xany(G_\Delta)$. By  \cref{res:XY_eq_Xany_of_GY} we have $X_\Y=\Xany(G_\Y)$.
    The claim then follows from \cref{res:DY_YD}.
\end{proof}





\begin{observation}\quad
    \label{res:stellifying_observations}
    \begin{myenumerate}
        \item By cloning all edges of the cycle $C$ we create a second cycle $C'$ on the~same vertex set. Stellifying this new cycle has the net effect of adding to $G$~a~suspension vertex over $C$ while also preserving the original cycle. 
        \item Since both cloning edges and \DYtrafos\ preserve 4-flatness, \itm1 implies that adding a suspension vertex over a triangle in $G$ preserves 4-flatness. 
        We prove a strengthening of this in \cref{res:3_clique_sums}.
        \item 
        Adding a cone over a cycle and then deleting all but two of the cone edges has the net effect of adding a chord to the cycle. Any chord can be created in this way.
        \item We can now deduce that stellifying a cycle of length $\ell\ge 4$ does not preserve~4-flatness: via cloning edges and stellifying 4-cycles one can reconstruct~the missing edge in the 4-flat graph $K_7-e$, thereby turning it into $K_7$, which is not 4-flat.
    \end{myenumerate}
\end{observation}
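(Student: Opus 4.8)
The plan is to exhibit a single explicit witness: the graph $K_7 - e$. We know from the discussion after \cref{def:full_complex} that $K_7 - e$ is 4-flat, because $K_7$ is an excluded minor for 4-flatness (van der Holst). We also know $K_7$ itself is not 4-flat, since $\mathcal K_7$ is not embeddable. So it suffices to produce, starting from $K_7 - e$, a sequence consisting only of \emph{edge-clonings} and \emph{stellifications of $4$-cycles} whose net effect is to add back the missing edge $e$, turning $K_7 - e$ into $K_7$. Since edge-cloning preserves 4-flatness (\cref{res:cloning_edges_4_flat}), if stellifying $4$-cycles also preserved 4-flatness then $K_7$ would be 4-flat, a contradiction. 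Hence stellifying a $4$-cycle cannot preserve 4-flatness; and the same conclusion for any $\ell \ge 4$ follows by the same argument, or by noting that a stellification of an $\ell$-cycle can be assembled from a stellification of a $4$-cycle plus further $4$-cycle stellifications after suitable edge-clonings (alternatively one runs the identical construction replacing ``$4$-cycle'' by ``$\ell$-cycle'' throughout, which still rebuilds $e$).

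The key construction is the ``add a chord'' trick recorded in \cref{res:stellifying_observations}\itm3, combined with the ``add a suspension vertex'' trick of \itm1, at the level of graphs rather than complexes. Concretely: let $u,v$ be the two non-adjacent vertices of $K_7 - e$ (so $e = uv$), and pick any common neighbour $x$; then $u x v$ is a path, so there is a $4$-cycle $C = u\,x\,v\,y$ for a further common neighbour $y$ (both $x,y$ exist since $u,v$ each have degree $5$ in $K_7-e$). Clone each edge of $C$ to obtain a parallel $4$-cycle $C'$ on the same vertex set $\{u,x,v,y\}$ (this is \itm1 applied to a $4$-cycle: cloning all four edges yields a second $4$-cycle), then stellify $C'$, introducing a new vertex $w$ adjacent to $u,x,v,y$; the original $C$ is untouched, so we have effectively added a cone vertex $w$ over $C$ while keeping $K_7 - e$ intact. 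Now delete from $w$ all cone-edges except $wu$ and $wv$, then contract the edge $wu$ (or $wv$). The net effect on the graph is exactly the addition of a single new $uv$-edge, i.e. we obtain $K_7$ (as a multigraph; but $K_7$ with an extra parallel edge is still not 4-flat, being a supergraph of the non-4-flat $K_7$, or one simply suppresses the parallel edge — deleting a parallel edge is taking a minor, and 4-flatness is minor-closed). One must check that every graph along the way is honestly reachable by clonings and stellifications: edge deletions and contractions are not among our allowed operations, so instead of literally deleting cone-edges one argues via minors at the \emph{end} — the graph obtained after the cloning-and-stellifying steps has $K_7$ as a minor, it is not 4-flat because $K_7$ is an excluded minor, yet it was built from $K_7 - e$ by clonings (4-flatness preserving) and one $4$-cycle stellification, forcing that stellification to destroy 4-flatness.

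The main obstacle — and the only point demanding care — is making precise the claim that the graph after cloning the edges of $C$ and stellifying the resulting parallel $4$-cycle $C'$ genuinely has $K_7$ as a minor. The cleanest route: after stellification we have all of $K_7 - e = $ (vertices $1,\dots,7$ with $e=uv$ missing) plus a new vertex $w$ adjacent to $u,v$ and to two other vertices. Contract $w$ onto $u$: the edge $wv$ becomes a $uv$-edge, so the contracted graph contains $K_7$ (after deleting any multi-edges and the now-irrelevant extra $w$-edges, both of which are minor operations). Since 4-flatness is minor-closed and $K_7$ is not 4-flat, the graph right after the single $4$-cycle stellification is not 4-flat, while the graph right before it (obtained from the 4-flat $K_7-e$ by edge-clonings) \emph{is} 4-flat by \cref{res:cloning_edges_4_flat}. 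This contradicts the hypothetical 4-flatness-preservation of $4$-cycle stellification and completes the proof; the general $\ell \ge 4$ case is identical, choosing instead a cycle $C$ of length $\ell$ through $u$ and $v$ (which exists in $K_7 - e$ for every $\ell \le 7$) and cloning/stellifying it in the same way.
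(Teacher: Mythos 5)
Your proposal is correct and follows essentially the same route as the paper: it is exactly the combination of items \itm1 and \itm3 of \cref{res:stellifying_observations} — clone the edges of a cycle through the two non-adjacent vertices of $K_7-e$, stellify the cloned cycle to add a cone vertex, and then use minor-closedness of 4-flatness (contracting a cone edge to recreate the missing edge and obtain a $K_7$ minor) to contradict the assumed preservation of 4-flatness under stellifying cycles of length $\ge 4$. Your explicit handling of the multigraph issue and of general $\ell$ via longer cycles through $u,v$ only makes precise what the paper leaves implicit.
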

%


    

Consider the following generalized stellification operation: stellifying a subgraph $H\subseteq G$ means to delete its edges and to add a new suspension vertex over it (and note that with the help of doubling edges, deleting the edges of $H$ does not make~any difference for 4-flatness).
Following the reasoning of \cref{res:stellifying_observations}~\itm3 and \itm4, stellifying a non-complete subgraph cannot preserve 4-flatness.
Likewise, stellifying $K_5$ cannot preserve 4-flatness either, because we can use it to turn $K_5$ into a triple suspension over $K_5$, which is not 4-flat.
This motivates the following question:

\begin{question}
    \label{q:stellifying_K4}
    Does stellifying a $K_4$ preserve 4-flatness?
\end{question}

\tempnewpage

\section{Reverse stellification and $\YD$-\trafos}
\label{sec:YD}

In \cref{sec:stellifying_cycles} we explored the conditions under which the embeddability~of~$\Xcirc$ implies the embeddability of $\Xstar$, as well as analogous questions for 4-flatness. 
In~this section we ask about the opposite direction: if $\Xstar$ embeds, must $\Xcirc$ embed to?\nls
This includes the \YD-part of van der Holst's \cref{conj:DY_YD}:

\begin{conjecture}
    Performing a \YDtrafo\ on a graph preserves 4-flatness.
\end{conjecture}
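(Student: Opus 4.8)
We sketch a possible line of attack, following the reduction used for the \DY-part in \cref{res:DY_YD_4_flat}. Let $G_\Y$ be 4-flat with a vertex $w$ of degree exactly $3$, say with neighbours $v_1,v_2,v_3$, and let $G_\Delta=(G_\Y-w)+\{v_1v_2,v_2v_3,v_3v_1\}$ be the result of the \YDtrafo; we may assume $v_1,v_2,v_3$ are pairwise non-adjacent in $G_\Y$, peeling off the overlap beforehand with \cref{res:cloning_edges_4_flat}, \cref{res:merging_iff_filling} and \cref{res:3_clique_sums}. Put $X_\Delta:=\Xany(G_\Delta)$ and $C:=v_1v_2v_3$. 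By \cref{res:XY_eq_Xany_of_GY}, the full complex $\Xany(G_\Y)$ equals $X_\Y=\Xstar$, the stellification of $C$ in $X_\Delta=\Xdisk$. Thus ``$G_\Y$ is 4-flat'' reads ``$\Xstar$ is embeddable'' and ``$G_\Delta$ is 4-flat'' reads ``$\Xdisk$ is embeddable'', so the statement is exactly the implication \itmto12 of \cref{conj:stellifying} in the case $\ell=3$. Note that the ``easy'' half of the problem is free: contracting the star-edge $e_i=wv_i$ exhibits $G_\Delta$ minus one edge of $C$ as a minor of $G_\Y$, so $G_\Delta-v_jv_k$ is 4-flat for each $\{i,j,k\}=\{1,2,3\}$. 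What must still be added is the last edge of the triangle together with its disk, and here monotonicity genuinely fails: re-adding an edge to a 4-flat graph is not in general safe (compare $K_7-e$ with $K_7$).

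The natural way to produce an embedding of $\Xdisk$ is local surgery near $w$. Fix an embedding $\phi$ of $\Xstar=\Xany(G_\Y)$ and pass to a link neighbourhood $B$ of $w^\phi$; the part of $\Xstar^\phi$ inside $B$ is a cone over the link $L:=\partial B\cap\Xstar^\phi$, which is an embedded graph in $\partial B\cong\Sph^3$ on the three points $p_i=e_i^\phi\cap\partial B$, carrying one arc between $p_i$ and $p_j$ for every cycle of $G_\Y$ through the path $v_iwv_j$. The plan is to excise this cone and refill $B$ by an embedded piece realising simultaneously the triangle $C$, a disk spanning $C$, the $2$-cells formerly passing through $w$ now rerouted to traverse the appropriate $v_iv_j$, and the genuinely new full-complex $2$-cells on cycles of $G_\Delta$ that use two edges of $C$. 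Since $G_\Y$ has many cycles through each path $v_iwv_j$, the three arc-families of $L$ are large, and the toolkit of \cref{sec:operations} (cloning, joining and rerouting $2$-cells, \cref{res:cloning_embeds,res:joining_embeds,res:rerouting_embeds}) can first be applied to clone and symmetrise them; the refilling should then be assembled cone-wise as in the proofs of \cref{res:DY_YD} and \cref{res:all_2_cells_vw}.

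The main obstacle is that this surgery can genuinely fail: reverse stellification of a $2$-complex does not preserve embeddability, as shown for $\ell\ge4$ by \cref{ex:reverse_stellifying_4_cycle_4_flat} and for $\ell=3$ by the Freedman--Krushkal--Teichner-based complex of \cref{sec:YD_counterexample}. Concretely, the arcs of $L$ may be linked inside $\Sph^3$ in a way that prevents the three arc-families from being pushed down onto one common spanning triangle, which is precisely what the refilling requires. Hence any proof must exploit what separates a full complex from those counterexamples: presumably the abundance and interchangeability of the $2$-cells of $\Xany(G_\Y)$ at each pair $v_i,v_j$, which one would hope forces the link $L$ (after enough cloning and rerouting) to be unlinkable and therefore fillable. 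Making this precise --- extracting from the full-complex structure enough cells to ``tame'' $L$ and span $C$ --- is the crux, and the reason why the \YD-part, unlike the \DY-part, remains open.
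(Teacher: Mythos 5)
There is a genuine gap here --- in fact the entire core argument is missing, and you say so yourself. What you have written is a research plan, not a proof: the decisive step, namely that after excising the cone at $w$ the link $L\subset\partial B$ can always be ``tamed'' (made unlinked/fillable) using the abundance of 2-cells of the full complex, is exactly the assertion that is never established, and everything downstream (refilling $B$ with the triangle $C$, a spanning disk, the rerouted cells, and the new cells using two edges of $C$) depends on it. For comparison: the paper does not prove this statement either --- it is stated there as an open conjecture (the \YD-part of \cref{conj:DY_YD}), accompanied only by evidence that it sits at the boundary of what can be true, namely \cref{ex:reverse_stellifying_4_cycle_4_flat} (failure of reverse stellification for cycles of length $\ge 4$ at the graph level) and the FKT-based complex of \cref{sec:YD_counterexample} (failure of \YDtrafos\ at the level of general 2-complexes). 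So your assessment of where the difficulty lies agrees with the paper's, but neither your sketch nor the paper closes it, and your proposal cannot be accepted as a proof.

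Two smaller points in the parts you do argue. First, the identification of the \YD-statement with the implication \itmto12 of \cref{conj:stellifying} for $\ell=3$ is not quite exact: that conjecture carries the hypothesis that $\Xcirc$ (no cell on $C$) is embeddable, which is not given to you when you only know that $\Xstar=\Xany(G_\Y)$ embeds; the relation is close but should be stated with care. Second, the opening reduction to the case where $v_1,v_2,v_3$ are pairwise non-adjacent in $G_\Y$, ``peeling off the overlap'' via \cref{res:cloning_edges_4_flat}, \cref{res:merging_iff_filling} and \cref{res:3_clique_sums}, is asserted without argument: \cref{res:merging_iff_filling} is a statement about complexes and requires a 2-cell along the parallel pair, and it is not clear how clique sums perform the claimed peeling at the graph level. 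Neither issue is the main obstacle, but both would need justification even within the programme you outline.
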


This conjecture  remains open.
We do however present evidence that its statement is  at the boundary of what can be true:
\begin{myenumerate}
    \item an analogous statement is not true for cycles of length $\ell\ge 4$: even if $\Gstar$ is 4-flat, $\Gcirc$ might not be (see \cref{ex:reverse_stellifying_4_cycle_4_flat} below).
    \item an analogous statement is not true for embeddability of general 2-complexes: even if $X_\Y$ is embeddable, $X_\Delta$ might not be (\cref{sec:YD_counterexample}).
\end{myenumerate}

\begin{example}
    \label{ex:reverse_stellifying_4_cycle_4_flat}    
    $\Gcirc:=K_7$ is not 4-flat.
    However, stellifying any of its 4-cycles~results in a 4-flat graph $\Gstar$.
    To see this, we make use of the fact that the double suspension of a planar graph is 4-flat (we prove this in \cref{res:linkless_planar_outerplanar} in the next section).
    Since we can delete two vertices of $\Gstar$ and obtain a planar graph (see~\cref{fig:star_circ_counterexample}),\nls $\Gstar$~is~contained in such a double suspension.
    In conclusion, reverse stellification at a 4-cycle does not always preserve 4-flatness.
\end{example}

\begin{figure}[h!]
    \centering
    \includegraphics[width=0.95\textwidth]{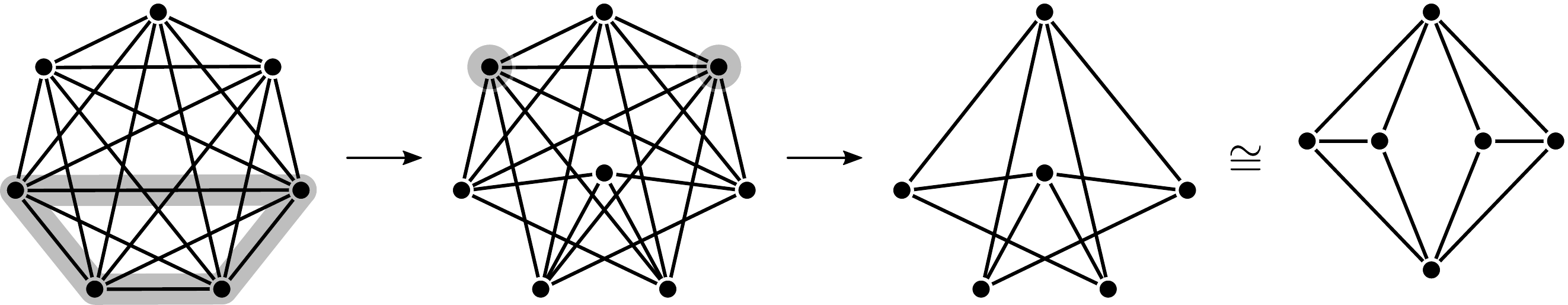}
    \caption{Starting from $K_7$ we first stellify a 4-cycle, then we delete~two vertices to obtain a planar graph.}
    \label{fig:star_circ_counterexample}
\end{figure}


\subsection{A \YDtrafo\ that does not preserve embeddability}
\label{sec:YD_counterexample}



The following construction is based on the Freedman-Krushkal-Teichner (FKT) complex \FKT\ \cite{freedman1994van}.
One of the main characteristics of the FKT complex is that it does \emph{not} embed into $\RR^4$, 
despite its van Kampen obstruction being zero.

We shall describe two 2-complexes, $X_\Delta$ and $X_\Y$, the latter being the \DY-transfor\-ma\-tion of the former. We will show that $X_\Y$ is embeddable, while $X_\Delta$~is~not~(by~proving that any potential embedding of it can be turned into an embedding of the FKT complex). 

We briefly recall the FKT complex and then modify it to obtain $X_\Delta$.

\subsubsection{The Freedman-Krushkal-Teichner complex}
The complex is built as follows:
\begin{enumerate}
    \item 
    start from two copies of $K_7$, say, $K_7$ with vertices $x_1,...,x_7$, and $\tilde K_7$ with vertices $\tilde x_1,...,\tilde x_7$.
    
    \item 
    \label{itm:all_2cells_but_one}
    attach a 2-cell $c_{ijk}$ to each 3-cycle $x_i x_j x_k$ of $K_7$, except $x_4x_5x_6$.
    And~analogously, attach a 2-cell $\tilde c_{ijk}$ to each 3-cycle $\tilde x_i\tilde x_j\tilde x_k$ of $\tilde K_7$, except $\tilde x_4\tilde x_5\tilde x_6$.
    
    \label{itm:K6_K6}

    \item 
    \label{itm:e6_e7}
    add an edge $e_6$ between $x_6$ and $\tilde x_6$. 


    \item attach a 2-cell $c^*$ along the closed walk $\partial c^*$ given by 
    \begin{align}\label{eq:commutator}
        x_6\, x_5\, x_4\, x_6\, \tilde x_6\, \tilde x_5\, \tilde x_4\, \tilde x_6\, x_6\, x_4\, x_5\, x_6\, \tilde x_6\,  \tilde x_4\, \tilde x_5\, \tilde x_6\, x_6.
    \end{align}
\end{enumerate}





The contrived attachment map $\partial c^*$ is chosen specifically to force the emergence of a Borro\-mean-ring-like structure in every potential embedding of the complex (details can be found in the original work of Freedman, Krushkal and Teichner \cite{freedman1994van}, or with more explanations in \cite{avramidi2021fungible}). The relevant property of $\FKT$ for our purposes is the following (see \cite[Section 3.2]{freedman1994van}): no embedding of $\FKT -c^*$ can be extended to a mapping of $\FKT$ into $\RR^4$, which means that we are allowing self-intersections of $c^*$, but still no intersections between $c^*$ and other parts of $\FKT$. 

\subsubsection{The complexes $X_\Delta$ and $X_\Y$}
We modify \FKT\ to obtain the complex $X_\Delta$ as follows: 
\begin{enumerate}
    \item identify $c_{123}$ 
     and $\tilde c_{123}$ in an interior point; call the resulting point $w$.
    \item identify $c^*$ and $ c_{123}$ in an interior point; call the resulting point $v$.
    \item identify $c^*$ and $\tilde c_{123}$ in an interior point; call the resulting point $\tilde v$.
\end{enumerate}

These three identifications do not respect the CW complex structure, which needs to be restored using suitable subdivisions.
We choose subdivisions which in particular contain the following edges:
%
\begin{enumerate}[resume]
    \item an edge $e_{123}\subset \Int(c_{123})$ from $v$ to $w$.
    \item an edge  $\tilde e_{123}\subset \Int(\tilde c_{123})$ from $\tilde v$ to $w$.
    \item an edge $e^*\subset \Int(c^*)$ from $v$ to $\tilde v$.
\end{enumerate}
The edges $e_{123},\tilde e_{123}$ and $e^*$ form the triangle $\Delta$ on which we later perform the~\DYtrafo.
The complex obtained by this \DYtrafo\ is $X_\Y$.

\subsubsection{Non-embeddability of $X_\Delta$}
\label{sec:XD_embeddable}

In \cite[Section 3.2]{freedman1994van} 
the authors prove 
that an embedding of $\FKT-c^*$ cannot be extended to a mapping of $\FKT$ into $\RR^4$, in~particular, allowing for self-intersections of $c^*$ (a fact that we will make use of).\nls
%
By~construction, an embedding of $X_\Delta$ defines a mapping of $X_{\mathrm{FKT}}$ into $\RR^4$ in which $c_{123}$, $\tilde c_{123}$ and $c^*$ have pairwise intersections at $v,\tilde v$ and $w$ respectively, but that is an embedding otherwise.
We show that we can get rid of these intersections,\nls creating self-intersections at most for $c^*$, and in this way, turning an embedding of $X_\Delta$ into a mapping of $X_{\mathrm{FKT}}$ that is known to not exist.

\begin{lemma}\label{res:remove_intersection}
Let $\phi_1,\phi_2\:\Sph^2\to\RR^4$ be \PL\ maps, potentially non-injective, with a single intersection $x:=\phi_1\cap \phi_2$, and injective in a neighborhood $B$ of $x$.
Then the intersection can be removed by modifying the $\phi_i$ in this neighborhood of $x$. 
If $\phi_i$ was injective to begin with, then it is still injective after the modification.
\end{lemma}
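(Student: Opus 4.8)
The plan is to make this a local, essentially two-dimensional cut-and-paste argument inside the link neighborhood $B$ of $x$. Fix $B$ to be a small PL ball around $x$ so that $\phi_1\cap B$ and $\phi_2\cap B$ are cones over $\phi_i\cap\partial B$ with apex $x$, and (by choosing $B$ small enough, using the definition of link neighborhood for PL sets) so that $\phi_i\cap B$ is a disk. Since $\phi_1\cap\phi_2=\{x\}$, the two circles $\rho_i:=\phi_i\cap\partial B$ are disjoint embedded PL circles in the $3$-sphere $\partial B\cong\Sph^3$. The point is to replace the cone on $\rho_1$ (a disk through $x$) by a disk with the same boundary $\rho_1$ that avoids $\phi_2$ entirely; then the intersection at $x$ disappears, and we will have created no new intersection of $\phi_1$ with $\phi_2$, and no new intersection of $\phi_1$ with itself outside $B$ (since the modification happens inside $B$ and agrees with $\phi_1$ on $\partial B$).

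The key step is therefore: given two disjoint PL circles $\rho_1,\rho_2$ in $\Sph^3=\partial B$, together with a PL disk $\Delta_2\subset B$ bounded by $\rho_2$ (the cone $\phi_2\cap B$, which is embedded), produce a PL disk $\Delta_1'\subset B$ with $\partial\Delta_1'=\rho_1$ that is disjoint from $\Delta_2$. I would do this by first pushing $\rho_1$ slightly into the interior of $B$ (taking $\rho_1\times[0,\varepsilon]$, a small collar annulus) to get a circle $\rho_1'$ in $\Int B\setminus\Delta_2$, and then observing that $\Int B\setminus\Delta_2$ is simply connected: $B$ is a $4$-ball, $\Delta_2$ is a properly embedded PL $2$-disk, so $B\setminus\Delta_2$ deformation retracts onto something with trivial $\pi_1$ — concretely, $\pi_1$ of the complement of a properly embedded disk in a ball is trivial by general position (any loop can be pushed off the disk, and any disk bounding it can be made to meet the $2$-disk in isolated points which can be removed by general-position finger moves, since $2+2<4$ leaves codimension for the surgery). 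Thus $\rho_1'$ bounds a PL disk $D$ in $B\setminus\Delta_2$; gluing $D$ to the collar annulus $\rho_1\times[0,\varepsilon]$ gives the desired $\Delta_1'$ with boundary $\rho_1$, disjoint from $\Delta_2$. Replacing $\phi_1\cap B$ by $\Delta_1'$ (and leaving $\phi_1$ unchanged outside $B$, $\phi_2$ unchanged everywhere) removes the intersection. The last sentence of the lemma is automatic: if $\phi_1$ was injective, then the only issue would be self-intersections created inside $B$, but $\Delta_1'$ is embedded and meets the rest of $\phi_1$ only along $\rho_1\subset\partial B$, so injectivity is preserved.

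The main obstacle I expect is making the ``general position / finger move'' step fully rigorous in the PL category — i.e.\ cleanly justifying that $\pi_1(B\setminus\Delta_2)=1$, or equivalently that $\rho_1'$ bounds an embedded disk missing $\Delta_2$. This is a standard codimension count (a $2$-disk and a would-be Whitney-type $2$-disk in a $4$-ball meet generically in finitely many points, which can be removed by finger moves since there is room to push) and can be cited from PL general position, e.g.\ from Rourke–Sankaran or Zeeman, but one must be slightly careful that the modification keeps the boundary $\rho_1$ fixed and stays inside $B$. An alternative, perhaps cleaner, route avoids $\pi_1$ entirely: triangulate so that $x$ is a vertex, let $\mathrm{st}(x)$ be its star; on $\mathrm{lk}(x)\cong\Sph^3$ the two circles $\rho_1,\rho_2$ are disjoint, and one simply re-cones — instead of coning $\rho_1$ from $x$, cone it from a nearby interior vertex $x'$ chosen so that the straight cone $x'*\rho_1$ misses the cone $x*\rho_2$; a generic choice of $x'$ works because the ``bad'' locus (choices of $x'$ for which the two cones intersect) is lower-dimensional. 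I would present the re-coning version as the main argument since it matches the cone-based style already used in the proofs of \cref{res:joining_embeds} and \cref{res:cloning_embeds}, and only invoke general position to justify that a generic apex $x'$ works.
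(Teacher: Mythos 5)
Both of your routes rest on the same false codimension count, and this is a fatal gap. Two $2$-dimensional polyhedra in a $4$-ball meet \emph{generically} in isolated points ($2+2=4$, not ``$2+2<4$''), and such intersection points cannot be removed by general position or finger moves --- the failure of the Whitney trick in dimension $4$ is precisely the phenomenon that makes the van Kampen obstruction insufficient and the Freedman--Krushkal--Teichner complex (to which this very lemma is applied) exist at all. Concretely, $\pi_1(B\setminus\Delta_2)$ is \emph{not} trivial for a properly embedded $2$-disk $\Delta_2$ in a $4$-ball: it is $\ZZ$ for the unknotted disk (generated by a meridian), by the usual codimension-$2$ phenomenon, and can be a more complicated group for knotted disks. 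Likewise, in your re-coning variant the ``bad'' locus of apexes $x'$ is generically all of $B$: for each $q\in\rho_1$ and $p\in x*\rho_2$ the collinearity condition contributes a $1$-parameter family of bad $x'$, for a total of $1+2+1=4$ parameters; equivalently, $x'*\rho_1$ and $x*\rho_2$ are two $2$-complexes in a $4$-ball and so generically meet in points. The \emph{algebraic} count of these points is the linking number $\mathrm{lk}(\rho_1,\rho_2)$ in $\partial B\cong\Sph^3$, which does vanish here (two $2$-cycles in $\RR^4$ have zero homological intersection number, and $x$ is the only geometric intersection, so the local intersection number at $x$, which equals $\mathrm{lk}(\rho_1,\rho_2)$, is $0$) --- but you never establish this, and even with linking number $0$ one only gets disjoint \emph{singular} disks via Milnor's link-homotopy classification; that does not in general produce an embedded disk for $\rho_1$ avoiding the cone on $\rho_2$, which you need for the injectivity-preservation clause of the lemma.

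The paper sidesteps all of this with a short trick that modifies \emph{both} maps rather than only $\phi_1$: delete $(\phi_1\cup\phi_2)\cap B$ and replace it by the image of the exterior $(\phi_1\cup\phi_2)\setminus B$ under inversion in the $3$-sphere $\partial B$. Each $\phi_i\cap B$ is thereby replaced by a reflected copy of $\phi_i\setminus B$, which is still a disk with boundary $\rho_i$, and the two new interior pieces are disjoint simply because the exteriors were disjoint; injectivity of $\phi_i$ outside $B$ transfers to injectivity of the new piece inside $B$. If you want a local cut-and-paste argument in the spirit of your proposal, that is the one to use.
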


\begin{proof}
    Delete $(\phi_1\cup \phi_2)\cap B$ and replace it by a copy of the exterior $(\phi_1\cup\phi_2)\setminus B$ via inversion on the 3-sphere $\partial B$.
    This replaces the disk $\phi_i\cap B$ by an image of the disk $\phi_i\setminus B$, keeping $\phi_i$ a map of the 2-sphere, and injective if $\phi_i$ was injective outside of $B$.
    Since there are no intersections of $\phi_1$ and $\phi_2$ outside of $B$, this removed the intersection.
\end{proof}

Observe that the subcomplex $\sigma$ induced on the vertices $x_1,x_2,x_3, x_7\in X_\Delta$ has 2-cells along each triangle (like a 3-simplex) and thus  forms a 2-sphere. 
Analogously, the subcomplex $\tilde\sigma$ on $\tilde x_1,\tilde x_2,\tilde x_3,\tilde x_7$ forms a 2-sphere.
In $X_\Delta$ these 2-spheres intersect exactly once in $w=c_{123}\cap \tilde c_{123}$.
In the embedding this intersection can~then~be~re\-moved using \cref{res:remove_intersection}.

Next, we observe that the boundary curve $\partial c^*$ can be filled in by a (self-intersec\-ting) disk disjoint from $\sigma$: use $\tilde\sigma - \tilde c_{123}$ to fill in the parts $\tilde x_6 \tilde x_5\tilde x_4$ and $\tilde x_6 \tilde x_4\tilde x_5$; the rest can be filled in by a ``collapsed disk'' (see \cref{fig:16_cycle}).
\begin{figure}[h!]
    \centering
    \includegraphics[width=0.5\textwidth]{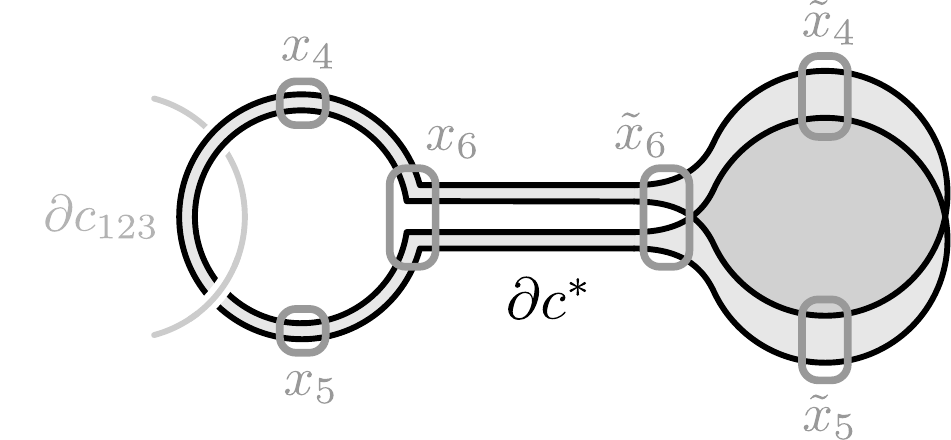}
    \caption{Visualization of a disk attached along $\partial c^*$ as constructed~in \cref{sec:XD_embeddable}. 
    Note that $\partial c^*$ is non-injective but is depicted in an~injec\-tive way to assist the visualization.
    The curve passes through a single vertex in the regions marked in gray.
    }
    \label{fig:16_cycle}
\end{figure}
%
Together with the~embedding of $c^*$ this is a mapping (but not an embedding) of a 2-sphere. 
We now use \cref{res:remove_intersection} to get rid of the unique intersection of $c^*$ and $c_{123}$ at $v$, potentially creating \mbox{self-inter}\-sections of $c^*$.
Analogously, we can get rid of the single intersection between~$c^*$ and $\tilde c_{123}$ at $\tilde v$.

Thus we have obtained a mapping of $\FKT$ into $\RR^4$, having self-intersections~only of $c^*$ and being an embedding otherwise.
This is a contradiction.

\subsubsection{Embeddability of $X_\Y$}


Recall that the complex 
$\mathcal K_7$
is obtained from the complete graph $K_7$ by attaching a 2-cell along each triangle. 
Let $x_1,...,x_7$ be its vertices and let $c_{ijk}$ 
 denote the 2-cell attached along $x_i x_j x_k$.
Recall further that while $\mathcal K_7$ does not embed, there exists a mapping $\phi\:\mathcal K_7\to\RR^4$ that is injective except for a single intersection between, say, $c_{123}$ and $c_{456}$ (\cref{ex:K7_almost_embeds}).
Define the complex~$\mathcal K_7^*$ from $\mathcal K_7$ by identifying $c_{123}$ and $c_{456}$ in an interior point.
We denote the~point of intersection by $y:=c_{123}\cap c_{456}$ and subdivide the intersecting 2-cells by edges of the form $yx_i$ to restore the CW complex structure.
We continue to denote the subdivided 2-cells by $c_{123}$ and $c_{456}$ respectively.
We can now view $\phi$ as an embedding of $\mathcal K_7^*$.
Observe that the link at $y^\phi$ consists of two linked cycles.
We now modify $\mathcal K_7^*$ into $X_\Y$ by a series of operations that preserve embeddability (\cf\ \cref{sec:operations}).

Choose a 4-ball $B\subset\RR^4$ that intersects $(\mathcal K_7^*)^\phi$ only in $y$. By inversion on $\partial B$ we embed a copy $\tilde{\mathcal K}_7^*$ that shares with $\mathcal K_7^*$ only the vertex $y$.
We denote its vertices~by $\tilde x_1,...,\tilde x_7$ and its 2-cells by $\tilde c_{ijk}$ respectively.
Observe that the link at $y^\phi$ now consists of four cycles that belong to two linked pairs but are otherwise unlinked.


Embed a disk $D\subset\RR^4$ with a path in its boundary $\partial D$ attached along $x_6 y x_6$. 
The opposite path $\partial D\setminus(x_6 y \tilde x_6)^\phi$ we now consider as an embedding of an edge~$e_6=x_6\tilde x_6$ and $D$ as a 2-cell attached along $y x_6 \cup y \tilde x_6 \cup e_6$.
After these additions, the link at $y^\phi$ now looks as shown in \cref{fig:y_link_1}.

\begin{figure}[h!]
    \centering
    \includegraphics[width=0.55\textwidth]{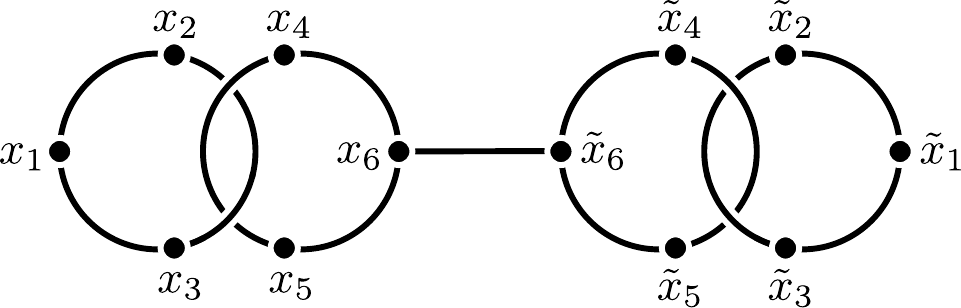}
    \caption{The link at $y^\phi$ at a stage during the construction of $X_\Y$.\nls The point labelled $x_i$ (\resp\ $\tilde x_i$) is the vertex of the link that corresponds to the edge $yx_i$ (\resp\ $y\tilde x_i$) in the complex.}
    \label{fig:y_link_1}
\end{figure}


By cloning relevant 2-cells incident to $y$ (\cf\ \cref{res:cloning_embeds}) and joining the clones suitably at shared edges (\cf\ \cref{res:joining}) we create an embedding of a (subdivided) 2-cell attached along the path $\partial c^*$ (see also \eqref{eq:commutator}) that intersects $c_{123}$ and $\tilde c_{123}$ only in $y$. \Cref{fig:y_link_2} visualizes the modification to the link at $y^\phi$.

\begin{figure}[h!]
    \centering
    \includegraphics[width=0.55\textwidth]{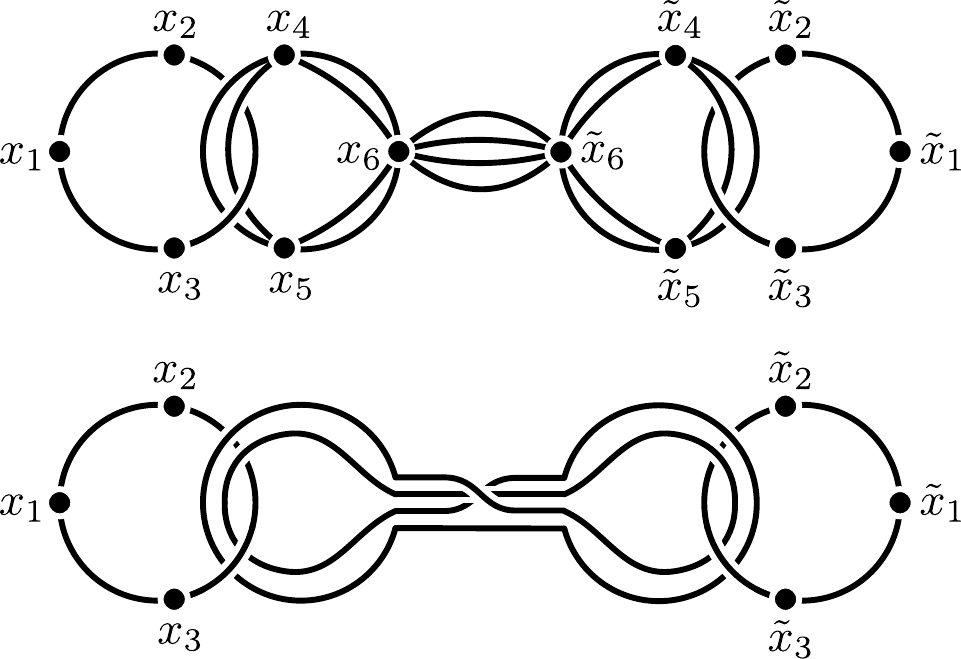}
    \caption{The link of $y^\phi$ after cloning (top) and then joining (bottom) certain 2-cells. The three cycles form Borromean rings.}
    \label{fig:y_link_2}
\end{figure}



It remains to turn $y$ into a vertex of degree three (the center of the $\Y$ in $X_\Y$).
As before, we attach suitable disks $D_1,D_2,D_3\subset\RR^4$ to the complex to modify the link as shown in \cref{fig:y_link_3} (top). 
We then collapse $D_i$ onto edges that we denote as $yz_i$ (see \cref{fig:y_link_3} bottom). The $z_i$ form the neighbors of $y$ in $X_\Y$.
We eventually adjust the subdivisions of the 2-cells so that $y$ is indeed of degree three.
This completes the construction of $X_\Y$ and its embedding.


\begin{figure}[h!]
    \centering
    \includegraphics[width=0.55\textwidth]{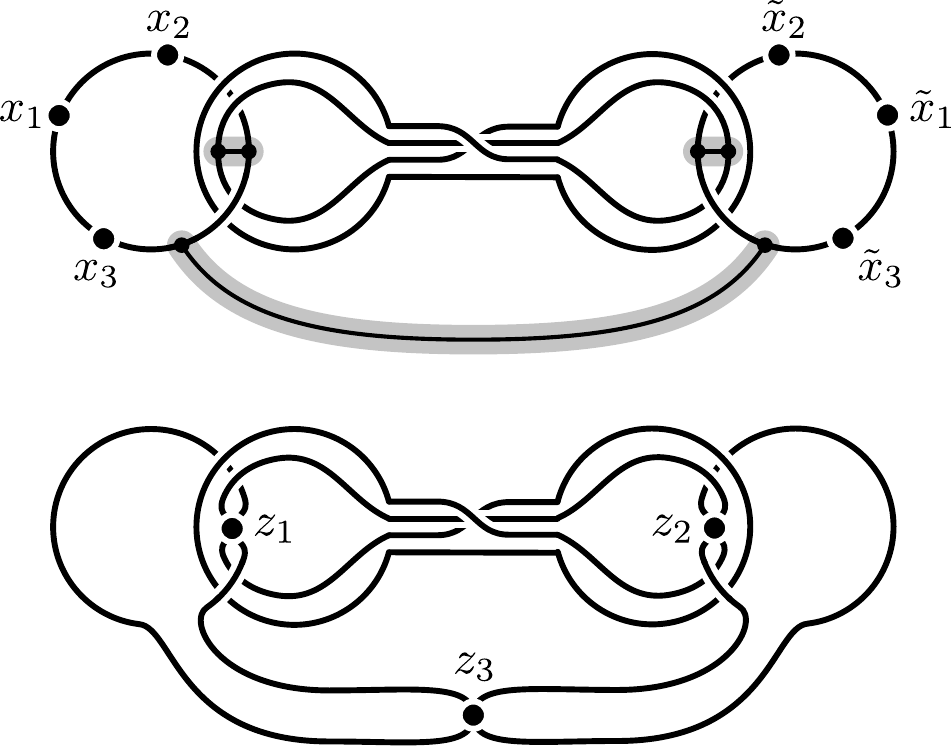}
    \caption{Modification to the link of $y^\phi$ during the last step of the~construction. Link vertices of degree two can be removed by modifying the subdivision of 2-cells adjacent to $y$.}
    \label{fig:y_link_3}
\end{figure}




\tempnewpage

\section{Results on 4-flat graphs}
\label{sec:4_flat}

In \cref{sec:operations} we developed sufficient machinery to already verify van der Holst's \cref{conj:cloning_edges} as well as the $\DY$-part of \cref{conj:DY_YD}. 
In this section we explore further consequences of our results for the theory of 4-flat graphs.

\subsection{Variants of full complexes}
\label{sec:variants_full_complex}

Recall that the \emph{full complex} $\Xany(G)$ of a graph $G$ has 1-skeleton $G$ and a 2-cell attached along each cycle of $G$.
Consider the following variants:
\begin{myenumerate}
    \item $\Xind(G)$ has a 2-cell attached along each \emph{induced} cycle of $G$.
    \item $\Xreg(G)$ has a 2-cell attached along each cycle of $G$ (this is just $\Xany(G)$).
    \item $\Xfull(G)$ has a 2-cell attached along each \emph{closed walk} in $G$.
\end{myenumerate}
The notion of 4-flatness discussed so far is defined by the embeddability of $\Xreg(G)$.
It is natural to consider analogous notions of 4-flatness defined using $\Xind(G)$ and $\Xfull(G)$ respectively. 
We show that they are equivalent:

\begin{theorem}
    \label{res:variants_equal}
    \label{thm:variants_equal}
    The following are equivalent:
    \begin{myenumerate}
        \item $\Xind(G)$ is embeddable.
        \item every regular complex on $G$ is embeddable (or equivalently, $\Xreg(G)$ is). 
        \item \label{it3} every finite (potentially non-regular) complex on $G$ is embeddable (or equivalently, every finite subcomplex of $\Xfull(G)$ is embeddable).\footnote{Note that $\Xfull(G)$ is infinite unless $G$ is just a vertex. Therefore, $\Xfull(G)$ cannot be embeddable itself -- when endowed with the CW topology -- because it is not metrizable, except in degenerate cases. Indeed, $\Xfull(G)$ contains the infinite star as a topological subspace. For this reason we consider only finite subcomplexes of $\Xfull(G)$ here. Alternatively, one can endow $\Xfull(G)$ with a rougher, metrizable, topology, and require that $\Xfull(G)$ be embeddable in item \ref{it3}. The details go beyond the scope of this paper however, and we refer the interested reader to \cite{MOmetCW}.}
    \end{myenumerate}
\end{theorem}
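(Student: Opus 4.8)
The plan is to reduce the three-way equivalence to a single strengthened claim and deduce it from the cloning and joining operations of \cref{sec:operations}. The implications \itmto32 and \itmto21 are immediate: $\Xreg(G)$ is a finite subcomplex of $\Xfull(G)$ (a finite graph has only finitely many cycles), $\Xind(G)$ is a subcomplex of $\Xreg(G)$ (an induced cycle is a cycle), and subcomplexes of embeddable complexes are embeddable. So everything reduces to \itmto13, and for this I would isolate the following claim: \emph{if $Y$ is an embeddable $2$-complex with $1$-skeleton $G$ that carries a $2$-cell along every chordless cycle of $G$, then for any closed walk $W$ in $G$ the complex obtained from $Y$ by attaching one more $2$-cell along $W$ is embeddable.} Granting this, \itmto13 follows by bootstrapping from $Y_0:=\Xind(G)$: the hypothesis on $Y$ only ever asks for the presence of certain $2$-cells, so it is preserved when we attach more cells; hence, given a finite complex $X$ on $G$, we may attach to $Y_0$ the $2$-cells of $X$ one at a time to land in an embeddable complex that contains $X$ as a subcomplex, so $X$ is embeddable. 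Letting $W$ range over all cycles recovers statement \itm2 as a special case.

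To prove the claim I would induct on the length $\ell$ of $W$. The base cases are the chordless cycles and the short walks: a chordless cycle (reading ``chord'' as an edge joining two non-consecutive vertices of the cycle, so that loops and $2$-cycles of parallel edges count as chordless) already carries a $2$-cell in $Y$, which we clone via \cref{res:cloning_embeds}; a constant walk is filled by a small $2$-sphere attached at the vertex; and a walk of length $2$ traversing a single edge back and forth is filled by a small folded disk, exactly as in the first paragraph of the proof of \cref{res:rerouting}. For the inductive step, let $\ell\ge 3$ with $W$ not a chordless cycle. If $W$ visits some vertex $v$ at two cyclically distinct positions, then $W$ decomposes as a concatenation $W=W_1W_2$ of two closed walks based at $v$, each of length $<\ell$; by the inductive hypothesis we attach $2$-cells along $W_1$ and along $W_2$, clone both (\cref{res:cloning_embeds}), and join the clones at the vertex $v$ (\cref{res:joining_embeds}, the length-$0$ case), producing a $2$-cell whose attaching walk is $W_1W_2=W$. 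Otherwise $W$ is a cycle with a chord $e=v_iv_j$ between non-consecutive vertices, which splits $W$ into two cycles $W_1,W_2$ with $3\le |W_1|,|W_2|<\ell$; we attach $2$-cells along $W_1$ and $W_2$, clone both, and join the clones along the edge $e$ (\cref{res:joining_embeds}, the length-$1$ case), producing a $2$-cell whose attaching walk is $(W_1-e)(W_2-e)=W$. In every case the new $2$-cell along $W$ sits inside an embeddable complex, as required.

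The main thing to get right is the bookkeeping in the inductive step: one must verify that the prescribed concatenation (respectively, splicing along $e$) of the sub-walks, with consistent orientations, really is $W$, and that each join is legitimate, i.e.\ the shared piece $v$ or $e$ genuinely lies on the boundaries of both clones — which it does, since the pieces were produced by splitting $W$ at precisely that vertex or chord. The other delicate point is the multigraph degeneracy (loops, parallel edges, backtracking attaching walks), which is exactly why the induction is framed around \emph{chordless} cycles rather than induced subcomplexes; with that reading every such degeneracy collapses to a base case. I expect the genuine content of the theorem to be concentrated in the half \itmto12 — manufacturing $2$-cells along non-induced cycles out of chordless ones — since that is where \cref{res:cloning_embeds,res:joining_embeds} are indispensable, whereas the passage from $\Xreg(G)$ to arbitrary finite complexes merely unravels repeated vertices of closed walks and is comparatively soft. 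The non-metrizability of $\Xfull(G)$ flagged in the footnote never intervenes, as we work only with finite subcomplexes.
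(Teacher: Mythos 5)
Your proposal is correct and rests on exactly the same mechanism as the paper's proof: the easy downward implications, and then manufacturing a 2-cell along an arbitrary cycle or closed walk by splitting at a chord (join clones along an edge, \cref{res:joining_embeds}) or at a repeated vertex (join clones at a vertex), using \cref{res:cloning_embeds} throughout. The only difference is organizational — you run a single induction on walk length covering \itmto13 directly, whereas the paper splits the argument into \itmto12 (chord decomposition into induced cycles) and \itmto23 (decomposition of walks into cycles and edge double-traversals) — and this does not change the substance.
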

%
\begin{proof}
   The implications \itm3 $\Longrightarrow$ \itm2 $\Longrightarrow$ \itm1 are obvious. Let us first verify \itmto12.

    Every cycle $C\subseteq G$ can be written as a homology sum $C_1\oplus \cdots \oplus C_r$ where each $C_i$ is an induced~cycle of $G$. Indeed, this is easy to prove by induction on the length of $C$, by using a chord $e$ of $C$ to write $C$ as a sum of two shorter cycles containing $e$ as long as $C$ is not induced.  A 2-cell attached along $C$ can be constructed by cloning 2-cells attached along the $C_i$~and joining the clones along shared edges. 
    Since cloning and joining along edges~pre\-serves embeddability (\cf\ Lemmas~\ref{res:joining} and \ref{res:cloning_embeds}), this proves \itmto12.

\medskip
Finally, we verify \itmto23.  A 2-cell with constant attachment map as well as a 2-cell attached along a single edge (traversing it twice in opposite directions) can always be added to a complex while preserving embeddability. 
    Any other walk $\gamma$ in $G$ can be written~as~the~concatenation of cycles and double-traversals of edges; call them $\gamma_1,...,\gamma_r$. 
    A 2-cell attached along $\gamma$ can be constructed by cloning 2-cells attached along the $\gamma_i$ and then joining the clones at the concatenation vertices. Using Lemmas~\ref{res:joining} and \ref{res:cloning_embeds} again, the implication  \itmto23 follows. 
\end{proof}

Since $\Xind(G)$, $\Xreg(G)$ and $\Xfull(G)$ are equivalent from the point of view of embedda\-bility, we will continue to write $\Xany(G)$ to denote any of them and choose the interpretation most convenient for the situation.
For example, $\Xind(G)$ is most convenient to establish embeddability since it has the fewest 2-cells:

\begin{example}
Since all induced cycles of $K_7-e$ are triangles we find $\Xind(K_7-e)$ as a subcomplex of $\mathcal K_7-\Delta$. Since $\mathcal K_7-\Delta$ is embeddable (see \cref{ex:K_7-Delta}), this gives a short proof that $K_7-e$ is indeed 4-flat.    
\end{example}

We present another argument based on Schlegel diagrams of polytopes: 

\begin{example}
    \label{ex:K6_4_flat}
    \label{ex:K7-e_4_flat}
    \label{ex:K6_and_K7-e_4_flat}
    $K_6$ is the 1-skeleton of the 5-dimensional simplex.
    Its 2-dimen\-sional faces form exactly the 2-cells of $\Xind(K_6)$.
    The Schlegel diagram of the simplex is a 3-dimensional complex embedded in $\RR^4$ that contains $\Xind(K_6)$ as a subcomplex.
    We conclude that $K_6$ is 4-flat.
    
    Analogously, consider gluing two 5-dimensional simplices at a 4-dimensional face. 
    The resulting polytope has $K_7-e$ as its 1-skeleton (the missing edge connected the vertices opposite to the glued faces) and each 2-dimensional face corresponds to a 2-cell in $\Xind(K_7-e)$. 
    Following the argument above, we conclude that  $K_7-e$ is 4-flat.
\end{example}





\subsection{Sliced embeddings}
\label{sec:variants_subspace}

We say that an embedding of a 2-complex $X$ is \emph{sliced}, if it embeds the 1-skeleton $G_X$ into the 3-dimensional~subspace $\Pi:=\RR^3\times\{0\}\subset\RR^4$. A number of 4-flat variants can be defined based on~such embeddings. First, we~note the following:


\begin{lemma}
    \label{res:embedd_G_in_subspace}
    Every embeddable 2-complex has a sliced embedding.
\end{lemma}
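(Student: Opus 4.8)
The plan is to deduce this from the well-known fact that a finite graph embeds essentially uniquely into $\RR^4$. First observe that it suffices to move the embedded $1$-skeleton into $\Pi=\RR^3\times\{0\}$ by an ambient isotopy: if $\phi\colon X\to\RR^4$ is the given embedding and $(h_t)_{t\in[0,1]}$ is a PL ambient isotopy of $\RR^4$ with $h_0=\id$ and $h_1(\phi(G_X))\subseteq\Pi$, then $h_1\circ\phi$ is again an embedding of $X$, and it is sliced. Since $G_X$ is a finite multigraph, it admits some PL embedding $\psi\colon G_X\hookrightarrow\RR^3=\Pi$ (e.g.\ place the vertices in general position and realise the edges as internally disjoint arcs). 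Now $\phi|_{G_X}$ and $\psi$ are two PL embeddings of the $1$-dimensional polyhedron $G_X$ into $\RR^4$, i.e.\ in codimension $4-1=3$. By the PL unknotting theorem in codimension $\ge 3$ (see \cite{rourke2012introduction}; cf.\ Zeeman's unknotting theorem), any two such embeddings are related by a PL ambient isotopy of $\RR^4$. Combined with the isotopy extension theorem, this produces the desired $(h_t)$ with $h_1\circ\phi|_{G_X}=\psi$, and hence a sliced embedding $h_1\circ\phi$ of $X$.

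If one wants the unknotting step to be self-contained, I would argue as follows, assuming $G_X$ connected (otherwise treat its components inside pairwise disjoint $4$-balls, and note that a loop is an embedded circle, hence unknotted in $\RR^4$, while parallel edges cause no extra difficulty). Fix a spanning tree $T\subseteq G_X$; since $T$ is collapsible, a regular neighbourhood of $\phi(T)$ is a $4$-ball, and by uniqueness of regular neighbourhoods one may isotope $\phi(T)$ into a standard position inside a small disc of $\Pi$. The remaining edges $e_1,\dots,e_m$ of $G_X$ are then arcs with both endpoints already on $\Pi$; one processes them one at a time, using at each step that an arc in $\RR^4$ with endpoints on $\Pi$ is unknotted rel its endpoints (codimension $3$ again), and that the extra room in $\RR^4$ lets the isotopy avoid $\phi(T)$ together with the previously placed edges. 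After $m$ steps all of $\phi(G_X)$ lies in $\Pi$.

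The crux is precisely this unknotting input: that a finite graph has an essentially unique PL embedding into $\RR^4$. It is entirely standard PL topology, but a fully rigorous self-contained proof needs some bookkeeping — one must keep the successive isotopies of the non-tree edges disjoint from the part of the graph already placed in $\Pi$, and one must dispatch the degenerate cases (disconnected $G_X$, loops, parallel edges). Everything beyond this is a routine application of general position and the ambient isotopy extension theorem, so I expect the proof in the paper to be short, most likely just invoking uniqueness of graph embeddings in $\RR^4$ and transporting the given embedding of $X$ along the resulting ambient isotopy.
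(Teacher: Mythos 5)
Your argument is correct, but it is genuinely different from the one in the paper. You reduce the lemma to the uniqueness of PL embeddings of a $1$-complex in $\RR^4$ up to ambient isotopy and then transport the whole complex along that isotopy; the needed input is true, but be careful how you cite it: ``codimension $\ge 3$'' unknotting in the form of Zeeman's theorem is a statement about ball/sphere pairs, and for arbitrary compact polyhedra the clean citable statement is the stable-range unknotting theorem (any two embeddings of a $k$-polyhedron in $\RR^n$ are ambient isotopic when $n\ge 2k+2$), which applies here exactly at the boundary case $k=1$, $n=4$; your hands-on second paragraph (spanning tree first, then non-tree arcs one at a time, avoiding what has already been placed) would indeed need the relative/avoidance bookkeeping you acknowledge. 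The paper avoids all of this machinery: it perturbs $\phi$ to a generic embedding so that the orthogonal projection $\pi\colon\RR^4\to\Pi$ is injective on the embedded $1$-skeleton (general position, since $2\cdot 1<3$), triangulates $\Pi$ with the projected skeleton as a subcomplex, and then applies a fibrewise vertical shear of $\RR^4$ (defined on the cylinders $\sigma\times\RR$ over the simplices) that pushes the skeleton down into $\Pi$ while carrying the $2$-cells along. So the trade-off is: the paper's proof is shorter and self-contained, using only general position plus an explicit homeomorphism of $\RR^4$, whereas your route invokes a heavier theorem (unknotting, plus isotopy extension) but in exchange proves slightly more, namely that the $1$-skeleton can be placed in $\Pi$ in any prescribed spatial position rather than merely in the position given by a generic projection.
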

\begin{proof}
    For a generic embedding $\phi\: X\to\RR^4$ the projection $\pi\:\RR^4\to\Pi$ is injective on the embedded skeleton $\smash{G_X^\phi}$.
    Let $H:=\pi (\smash{G_X^\phi)}\subset\Pi$ be the image of the projection.
    Choose a triangulation $\mathcal T$ of $\Pi$ that contains $H$ as a subcomplex.
    For each simplex $\sigma\in \mathcal T$, let $\hat\sigma:=\pi^{-1}(\sigma)=\sigma\times\RR$ be the cylinder over $\sigma$.
    We~define~a~homeo\-morphism $f\:\RR^4\to\RR^4$ as follows: for a vertex $v\in\mathcal T$, either $v\not\in H$ and $f$ fixes the ray $\hat v$, or $v\in H$ and $f$ translates the ray $\hat v$ by moving the point $\pi^{-1}(v)\cap X^\phi$ to $v$.
    Then extend $f$ linearly to all cylinders $\hat\sigma$. The image $f(X^\phi)$ is a sliced embedding of $X$.
\end{proof}

This suggests to consider the following subtypes of sliced embeddings:
%
%
\begin{myenumerate}
    \item a \emph{$+$-sliced embedding} embeds each 2-cell into the halfspace $\RR^3\times\RR_+$.
    \item a \emph{$\pm$-sliced embedding} embeds each 2-cell into one of the halfspaces $\RR^3\times\RR_+$ and $\RR^3\times\RR_-$. 
\end{myenumerate}
%

It turns out that graphs for which $\Xany(G)$ has a $+$-sliced~embedding are exactly the linkless graphs: recall that a graph $G$ is \emph{linkless} if there~is an embedding $\phi\: G\to\RR^3$ so that any two disjoint cycles of $G$ are mapped to unlinked closed curves.

\begin{theorem}
    \label{res:Rplus_iff_linkless}
    The following are equivalent for every graph $G$:
    \begin{myenumerate}
        \item $\Xany(G)$ has a $+$-sliced embedding.
        \item $G$ is linkless.
    \end{myenumerate}
\end{theorem}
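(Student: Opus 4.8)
The plan is to prove the two implications separately. For \itmto21, suppose $G$ is linkless and fix a linkless embedding $\phi\colon G\to\RR^3\times\{0\}$. I would build the $+$-sliced embedding of $\Xany(G)=\Xind(G)$ by, for each induced cycle $C\subseteq G$, attaching a disk $D_C$ living in the open halfspace $\RR^3\times\RR_+$ (except for its boundary $C^\phi$). The existence of these disks, pairwise disjoint and each disjoint from $G^\phi\setminus C^\phi$, is essentially the statement that a linklessly embedded graph admits a \emph{panelled} (or \emph{knotless and linkless, with disks}) embedding: this is a classical fact, due to Robertson--Seymour--Thomas, that a linkless embedding can be chosen ``flat'', meaning every cycle bounds a disk in the complement of the rest of the graph. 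Pushing each such disk off into the upper halfspace (and using general position to make the disks disjoint from one another, which is possible since they are $2$-dimensional in the $4$-dimensional halfspace) gives a $+$-sliced embedding of $\Xind(G)$, hence of $\Xany(G)$ by \cref{thm:variants_equal}. The one subtlety is that a flat embedding gives disks in $\RR^3$ that may intersect each other; but two surfaces of dimension $2$ in $\RR^4$ can be made disjoint by a generic perturbation into the fourth coordinate, so after slicing into $\RR^3\times\RR_+$ we may arrange the disks to be pairwise disjoint while keeping their boundaries on the slice $\Pi$.

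For \itmto12, suppose $\Xany(G)$ has a $+$-sliced embedding $\phi$, so $G^\phi\subset\Pi=\RR^3\times\{0\}$ and every cycle of $G$ bounds a disk $D_C=c_C^\phi$ in $\RR^3\times\RR_+$. I claim $\phi|_G$ is a linkless embedding of $G$ into $\RR^3$. Take two disjoint cycles $C_1,C_2\subseteq G$; I must show $C_1^\phi$ and $C_2^\phi$ are unlinked closed curves in $\RR^3\cong\Pi$. Consider the disk $D_{C_1}\subset\RR^3\times\RR_+$ bounding $C_1^\phi$. Since $C_1$ and $C_2$ are disjoint cycles, the $2$-cells $c_{C_1}$ and $c_{C_2}$ of $\Xany(G)$ have disjoint boundaries, and (after a generic perturbation preserving the $+$-sliced structure) their embedded images $D_{C_1}$ and $D_{C_2}$ meet transversally in finitely many interior points; moreover $D_{C_1}$ meets $C_2^\phi\subset\Pi$ only possibly at points of $\partial D_{C_1}$, i.e.\ not at all since $C_1,C_2$ are disjoint. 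Now the linking number of $C_1^\phi$ and $C_2^\phi$ in $\RR^3$ equals the algebraic intersection number of $C_2^\phi$ with any Seifert surface for $C_1^\phi$ in $\RR^3$. The key point: $D_{C_1}$ sits in the halfspace $\RR^3\times\RR_+$ with boundary on $\Pi$, and by a standard argument (``pushing the disk flat'' / using that $\RR^3\times\RR_+$ deformation retracts onto $\Pi$) we can homotope $D_{C_1}$ rel boundary into $\Pi$, obtaining a singular disk in $\RR^3$ bounding $C_1^\phi$; during this homotopy $C_2^\phi$ stays in $\Pi$ and disjoint from $C_1^\phi$, so the algebraic intersection number of $C_2^\phi$ with the (perturbed, now-in-$\RR^3$) disk is unchanged and well-defined, but the disk image and $C_2^\phi$ both lie in $\RR^3$ — so this number computes the linking number. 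Since $D_{C_1}$ was \emph{disjoint} from $C_2^\phi$ to begin with (they lie in $\RR^3\times\RR_+$ and $\RR^3\times\{0\}$ meeting only potentially along $\partial D_{C_1}=C_1^\phi$, disjoint from $C_2^\phi$), the intersection number is $0$, so the linking number is $0$. Hence $\phi|_G$ is linkless.

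The main obstacle I anticipate is making the linking-number argument in \itmto12 fully rigorous: one must be careful that ``$D_{C_1}$ lies in a halfspace with boundary on the slice and is disjoint from $C_2^\phi$'' really forces linking number zero. The clean way is probably: $\RR^4\setminus C_1^\phi$ has $H_1\cong\ZZ$; the class of $C_2^\phi$ in $H_1(\RR^4\setminus C_1^\phi)$ is automatically zero (codimension $3$), which says nothing directly, so instead one works in $\RR^3$. The right statement is that $C_1^\phi$ bounds a disk $D_{C_1}$ in $\RR^3\times\RR_{\ge0}$ missing $C_2^\phi$, and one wants this to imply $C_1^\phi$ bounds a singular disk in $\RR^3$ missing $C_2^\phi$, i.e.\ $\mathrm{lk}(C_1^\phi,C_2^\phi)=0$; this follows because the inclusion $\Pi\setminus C_2^\phi\hookrightarrow(\RR^3\times\RR_{\ge0})\setminus C_2^\phi$ is a homotopy equivalence (deformation retract of the halfspace to its boundary, which misses $C_2^\phi\subset\Pi$ only if the retraction is chosen to fix $\Pi$ — and since $C_2^\phi\subset\Pi$, the straight-line retraction $(x,t)\mapsto(x,(1-s)t)$ does fix it), so the nullhomotopy of $C_1^\phi$ in the halfspace complement pushes down to $\Pi\setminus C_2^\phi$, giving linking number zero. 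Getting this homotopy-equivalence bookkeeping exactly right, together with the general-position perturbations needed to keep everything $+$-sliced, is where the care is needed; the topological content is otherwise standard.
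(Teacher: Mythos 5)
Your plan follows the paper's route for \itmto21 but not for \itmto12, and both directions as written contain genuine gaps.

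For \itmto21 you start, as the paper does, from a flat (Robertson--Seymour--Thomas) embedding and want to lift the disks $D_C$ into the upper halfspace, but the step ``two surfaces of dimension $2$ in $\RR^4$ can be made disjoint by a generic perturbation'' is false: since $2+2=4$, two $2$-disks in general position in $\RR^4$ meet in isolated points, and a transverse double point is \emph{stable} under small perturbation (this is exactly why embeddability of $2$-complexes in $\RR^4$ is delicate, cf.\ the van Kampen obstruction and the FKT complex); moreover your perturbations are pinned along the boundaries in the slice $\Pi$. General position therefore only yields a map of $\Xind(G)$ with finitely many double points, not an embedding. The paper closes precisely this gap with a non-generic, explicit lift (van der Holst's argument): each disk $D_c\subset\RR^3$ is raised to the graph of $a_c\,\dist(\,\cdot\,,G^\phi)$ with pairwise distinct constants $a_c>0$, and if two lifted cells met in an interior point, comparing last coordinates would force $a_{c_1}=a_{c_2}$. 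Without an ingredient of this kind your direction \itmto21 is not proved.

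For \itmto12 your argument uses only the disk $D_{C_1}$ together with the curve $C_2^\phi$, and its two key claims fail. First, $(\RR^3\times\RR_{\ge 0})\setminus C_2^\phi$ is contractible (the homotopy $(x,t)\mapsto(x,t+s)$ stays in the complement and lands in the convex set $\RR^3\times[1,\infty)$), so the inclusion of $\Pi\setminus C_2^\phi$ is far from a homotopy equivalence; the straight-line retraction you invoke pushes points lying directly above $C_2^\phi$ onto $C_2^\phi$ and does not preserve the complement. Consequently, ``$C_1^\phi$ bounds a disk in the halfspace missing $C_2^\phi$'' carries no linking information whatsoever: for \emph{any} two disjoint curves in $\Pi$, including a Hopf link, one can push $C_1^\phi$ slightly upward and cone it off at large height to obtain such a disk. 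The correct statement, which the paper cites (Skopenkov's lemma), requires \emph{both} cycles to bound disks in the closed halfspace that are disjoint \emph{from each other}; these are exactly what the $+$-sliced embedding provides, since the two $2$-cells have disjoint boundaries and hence disjoint images. Finally, even granting zero linking number for every disjoint pair, your conclusion ``hence $\phi|_G$ is a linkless embedding'' does not follow (zero linking number does not imply unlinked, cf.\ the Whitehead link); the paper avoids this by arguing contrapositively, using the excluded-minor characterization of linkless graphs to guarantee that every $\RR^3$-embedding of a non-linkless graph contains two disjoint cycles of \emph{nonzero linking number}, which then contradicts the disjoint-disk statement.
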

\begin{proof}
    We start with the proof of \itm2 $\Longrightarrow$ \itm1, which is essentially due to van der~Holst \cite[Theorem 2]{van2006graphs}, but we include it here for completeness and for its elegance.
    If~$G$~is linkless, then it is known that it has a \emph{flat} embedding $\phi\:G\to \RR^3$ \cite{RoSeThoSac}, that is, for each 2-cell $c\subseteq \Xreg(G)$ ---which is attached along a cycle in $G$--- there exists an embedded disk $D_c\:c\to \RR^3$ with $D_c\cap \embd G=\partial D_c \cap G^\phi=\partial c$.
    We choose distinct numbers~$a_c>0$, one for each 2-cell $c\subseteq\Xreg(G)$.
    We now extend $\phi$ to embed the 2-cell $c$ into $\RR^3\times$ $\RR_+$ by the following map:
    $$c\ni x \;\mapsto\; \phi(x):=\begin{pmatrix}
        D_c(x)
        \\[0.3ex]
        a_c\dist(D_c(x),\embd G)
    \end{pmatrix} \in\RR^3\times\RR_+
    $$
    %
    %
    where $\dist(D_c(x),\embd G)$ is the distance of $D_c(x)$ to the closest point in $G^\phi$.
    It remains to show that this is an embedding.
    Clearly, $\phi$ is an embedding on $G$ with any single 2-cell.
    If two embedded 2-cells $c_1^\phi$ and $c_2^\phi$~were~to~inter\-sect in points $\phi(x_1)=\phi(x_2)$, where $x_i\in c_i$, then comparing the $\phi(x_i)$ component-wise yields
    $$a_{c_1}\dist(D_{c_1}(x_1),\embd G)=a_{c_2}\dist(D_{c_2}(x_2),\embd G)=a_{c_2}\dist(D_{c_1}(x_1),\embd G),$$
    and hence $a_{c_1}=a_{c_2}$, implying  $c_1=c_2$, which is a contradiction.
    

    For the implication \itm1 $\Longrightarrow$ \itm2, note first that if $G$ is not linkless, then each embedding in $\RR^3$ contains two disjoint cycles of non-zero linking number (by definition it contains two linked cycles, but the non-zero linking number follows from the excluded-minor characterization of linkless graphs and the fact that such a pair of cycles exists for each excluded minor; see \cite{sachs2006spatial} or \cite[Theorem 36]{van2009graph}). 
    Secondly,\nls a pair of disjoint closed curves in $\RR^3\times\{0\}$ of non-zero linking number cannot be filled in by disjoint disks in $\RR^3\times\RR_+$ (see \eg\ \cite[Lemma 2]{skopenkov2021short}). These two facts combined prove \itm1 $\Longrightarrow$ \itm2.
\end{proof}

In contrast, graphs $G$ for which $X(G)$ has a $\pm$-sliced embeddings seem to be a much larger class. While we suspect that not every~embed\-dable~2-complex (or full complex of a 4-flat graph) has a $\pm$-sliced embedding, we~do~not know of any examples.

\begin{question}
    \label{q:pm_sliced}
    \quad
    \begin{myenumerate}
        \item Does every embeddable 2-complex have a $\pm$-sliced embedding?
        \item If $G$ is 4-flat, must its full complex $\Xany(G)$ have a $\pm$-sliced embedding?
    \end{myenumerate}
\end{question}



\subsection{4-flat and linkless graphs}
\label{sec:4flat_linkless}
\label{sec:4_flat_linkless}

We say that a graph $G$ is \emph{locally linkless}, if the~neighborhood $N_G(x)$ of each vertex $x\in G$ is a linkless graph.

\begin{lemma}
    \label{res:locally_linkless}
    4-flat graphs are locally linkless.
\end{lemma}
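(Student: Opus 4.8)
The plan is to prove this directly from an embedding of $\Xany(G)$: for a fixed vertex $x$, from $\phi\:\Xany(G)\to\RR^4$ I will produce an embedding of $H:=G[N_G(x)]$ into $\RR^3$ in which every two disjoint cycles have linking number zero. By the Robertson--Seymour--Thomas theory, in exactly the form already used in the proof of \cref{res:Rplus_iff_linkless} --- a non-linkless graph has, in \emph{every} $\RR^3$-embedding, a pair of disjoint cycles of non-zero linking number --- this forces $H$ to be linkless. We may assume $G$ is simple, since loops and parallel edges do not affect linklessness of $H$.

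First I would pass to the link of $x$. Choose a link neighbourhood $B$ of $\phi(x)$ --- a small round ball, so that $B\cap\Xany(G)^\phi$ is the cone with apex $\phi(x)$ over $L:=\partial B\cap\Xany(G)^\phi\subset\partial B\cong\Sph^3$ --- taken small enough to be disjoint from the finitely many images $\phi(c_C)$ of the $2$-cells $c_C$ of $\Xany(G)$ attached along cycles $C$ of $H$. In $L$ I mark the point $p_u:=\partial B\cap\phi(xu)$ for each $u\in N_G(x)$, and the arc $\alpha_{uv}:=\partial B\cap\phi(c_{xuv})$ for each edge $uv\in E(H)$ (the regular $2$-cell along the triangle $xuv$ meets $\partial B$ in an arc from $p_u$ to $p_v$). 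Since $\phi$ is an embedding, distinct $2$-cells of $\Xany(G)$ meet only along shared edges and vertices, from which a short check shows that the arcs $\alpha_{uv}$ assemble into an embedded copy $H_L$ of $H$ in $\Sph^3$; deleting a point of $\Sph^3\setminus L$ then yields an embedding $\iota\:H\to\RR^3$.

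The heart of the argument is to show that any two vertex-disjoint cycles $C_1,C_2$ of $H$ satisfy $\lk_{\RR^3}(\iota(C_1),\iota(C_2))=0$, equivalently $\lk_{\Sph^3}(\tilde C_1,\tilde C_2)=0$ for the corresponding cycles $\tilde C_i\subset H_L$. I would do this by bounding each $\tilde C_i$ by a disk $D_i$ in the \emph{complementary} $4$-ball $B^c:=\Sph^4\setminus\Int B$ (with $\Sph^4=\RR^4\cup\{\infty\}$, so that $B^c$ is a genuine PL $4$-ball with $\partial B^c=\partial B$), arranged so that $D_1\cap D_2=\eset$; then $\lk_{\Sph^3}(\tilde C_1,\tilde C_2)=D_1\cdot D_2=0$ by the standard identification of the linking number with the intersection number of properly embedded $2$-chains in a $4$-ball. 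The disk $D_i$ is built by first taking the union $A_i$ of the ``outer parts'' $\phi(c_{xuv})\setminus\Int B$ over the edges $uv$ of $C_i$: this is an annulus with boundary circles $\tilde C_i$ and $\phi(C_i)$, because consecutive outer parts share an arc of some $\phi(xu_j)$ while non-consecutive ones meet only at $\phi(x)\in\Int B$. Then I cap $A_i$ along $\phi(C_i)$ with the $2$-cell $\phi(c_{C_i})$ --- available \emph{because $\Xany(G)$ fills every cycle, in particular $C_i$ itself}. The resulting disk $D_i=A_i\cup\phi(c_{C_i})\subset\RR^4\setminus\Int B\subset B^c$ avoids $\phi(x)$ altogether, and $D_1\cap D_2=\eset$ because, $C_1$ and $C_2$ being vertex-disjoint, the $2$-cells constituting $D_1$ (namely $c_{C_1}$ together with the $c_{xuv}$, $uv\in E(C_1)$) and those constituting $D_2$ share no edge or vertex of $G$ except possibly $x$, and any two sharing only $x$ have images meeting only at $\phi(x)\in\Int B$.

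The step I expect to be the main obstacle --- and the one carrying the real idea --- is making the two disks disjoint. Each $\tilde C_i$ also bounds the cone $\cone_{\phi(x)}(\tilde C_i)$ inside $B$, but all such cones pass through $\phi(x)$ and hence can never be made disjoint; the way out is to route the bounding disk to the far side $B^c$ by capping the ``cone annulus'' with the honest $2$-cell $\phi(c_{C_i})$, which exists precisely because we are working with a full complex. Two secondary points to get right are the elementary verification that the $\alpha_{uv}$ really form an embedded copy of $H$ in $\Sph^3$, and the remark that $B^c$ is a $4$-ball because $B$ was chosen to be a round link neighbourhood, so the open PL Schoenflies problem in dimension $4$ is irrelevant here.
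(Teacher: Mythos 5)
Your proof is correct, and it is in substance the paper's own argument: the paper collapses the cylinder $C^\phi\setminus B$ over $N_G(x)$ (via \cref{res:collapsing_cylinder}) so that $N_G(x)$ lies in $\partial B$ with every 2-cell along a cycle of $N_G(x)$ outside $B$, and then invokes the ``$+$-sliced $\Rightarrow$ linkless'' direction of \cref{res:Rplus_iff_linkless}, whose proof rests on exactly the disjoint-bounding-disks fact you establish by hand. Your disks $D_i=A_i\cup\phi(c_{C_i})$ are precisely the images of the 2-cells of $\Xany(N_G(x))$ after the paper's cylinder collapse, so you have simply inlined the two lemmas the paper cites into one direct construction.
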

\begin{proof}
    Fix an embedding $\phi\: \Xany(G)\to\RR^4$.
    For a vertex $x$ of $G$, let $B\subset\RR^4$ be a link neighborhood of $x^\phi$.
    Note that $\Xany(G)$ contains a cone $C$ over $N_G(x)$ with apex at $x$.
    Then the closure of $C^\phi\setminus B$ is a cylinder of the form $N_G(x)\times[0,1]$.
    Collapsing this cylinder onto $C\cap \partial B$ preserves embeddability by \cref{res:collapsing_cylinder}.
    This results in an embedding of $\Xany(G)$ for which $N_G(x)$ is embedded into $\partial B$, and each 2-cell non-incident with $x$ is embedded outside of $B$.
    In particular, it yields a $+$-sliced embedding of $\Xany(N_G(x))$.
    Thus $N_G(x)$ is linkless by \cref{res:Rplus_iff_linkless}.
\end{proof}

Using \cref{res:locally_linkless} we obtain a short argument for the following well-known fact:

\begin{corollary}
    \label{res:K7_K3311_short}
    $K_7$ and $K_{3,3,1,1}$ are not 4-flat.
\end{corollary}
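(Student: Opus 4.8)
The plan is to deduce the corollary directly from \cref{res:locally_linkless}, which says that any 4-flat graph must be locally linkless, i.e.\ the neighborhood $N_G(x)$ of every vertex $x$ must be a linkless graph. So it suffices to exhibit, in each of $K_7$ and $K_{3,3,1,1}$, a single vertex whose neighborhood is not linkless. Recall that the excluded minors for linklessness are the Petersen family, among them $K_6$ and $K_{3,3,1}$; a graph that contains one of these as a (sub)graph, or more generally as a minor, is not linkless.

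First I would treat $K_7$. Every vertex $x\in K_7$ has $N_{K_7}(x)=K_6$, since $x$ is adjacent to all six remaining vertices and those six vertices induce a complete graph among themselves. As $K_6$ is a member of the Petersen family it is not linkless, so $K_7$ is not locally linkless, and hence not 4-flat by \cref{res:locally_linkless}.

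Next I would treat $K_{3,3,1,1}$. Label the parts as $A=\{a_1,a_2,a_3\}$, $B=\{b_1,b_2,b_3\}$, and the two singleton parts $\{u\}$, $\{w\}$. Take $x=u$. Its neighborhood consists of all vertices in the other parts, namely $A\cup B\cup\{w\}$, and the subgraph induced on these seven vertices is precisely $K_{3,3,1}$ (the complete tripartite graph on parts $A$, $B$, $\{w\}$, since within $N(u)$ all edges of $K_{3,3,1,1}$ not touching $u$ survive, and the only missing edges are those inside $A$ and inside $B$). Since $K_{3,3,1}$ lies in the Petersen family, it is not linkless, so $N_{K_{3,3,1,1}}(u)$ is not linkless, and therefore $K_{3,3,1,1}$ is not 4-flat by \cref{res:locally_linkless}. (Alternatively, one could contract an edge of $K_{3,3,1}$ to obtain $K_6$ and invoke minor-monotonicity of linklessness; either route closes the argument.) This completes the proof; the only mild subtlety, and the step worth stating carefully, is correctly identifying the induced neighborhood subgraphs as Petersen-family members — everything else is immediate from \cref{res:locally_linkless}.
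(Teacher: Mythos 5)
Your proposal is correct and is essentially identical to the paper's own proof: both invoke \cref{res:locally_linkless}, identify the neighborhood of any vertex of $K_7$ as $K_6$ and the neighborhood of a dominating vertex of $K_{3,3,1,1}$ as $K_{3,3,1}$, and conclude via the non-linklessness of these Petersen-family graphs. Your write-up merely makes the identification of $N(u)\cong K_{3,3,1}$ more explicit.
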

\begin{proof}
    The neighborhood of a vertex in $K_7$ is $K_6$.
    The neighborhood of a domina\-ting vertex in $K_{3,3,1,1}$ is $K_{3,3,1}$. 
    But the graphs $K_6$ and $K_{3,3,1}$ are not linkless~\cite{RoSeThoSac}. 
\end{proof}

\subsection{4-flatness and suspensions}
\label{sec:4flat_coning}

Given graphs $G$ and $H$, we let $G*H$  denote the graph join of $G$ and $H$, \ie\, the graph obtained from the disjoint union $G\cupdot H$ by adding the edges of a complete bipartite graph between the vertices of $G$ and the vertices of $H$.
For example, $G*K_1$ is a single suspension of $G$, and $G*K_n$ is an $n$-fold (iterated) suspension.

\begin{theorem} 
\label{res:linkless_planar_outerplanar}
The following hold for every graph $G$:
\begin{myenumerate}
    \item $G*K_1$ is 4-flat if and only if $G$ is linkless.
    \item $G*K_2$ is 4-flat if and only if $G$ is planar.
    \item $G*K_3$ is 4-flat if and only if $G$ is outerplanar.
\end{myenumerate}
\end{theorem}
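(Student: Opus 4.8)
The plan is to reduce the whole statement to part~\itm1 and then bootstrap, using associativity of the graph join in the form $G*K_n \cong (G*K_{n-1})*K_1$, together with two classical facts: a graph $H*K_1$ is linkless if and only if $H$ is planar, and $H*K_1$ is planar if and only if $H$ is outerplanar. Part~\itm1 itself splits cleanly. For the forward direction, let $v$ be the apex vertex of $G*K_1$; then $N_{G*K_1}(v) = G$, so if $G*K_1$ is 4-flat then $G$ is linkless by \cref{res:locally_linkless}. The content lies in the backward direction, which I would establish by combining the $+$-sliced embedding machinery of \cref{sec:4_flat} with a cone construction as in \cref{sec:operations}.

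So assume $G$ is linkless. By \cref{res:Rplus_iff_linkless}, the full complex $\Xany(G)$ admits a $+$-sliced embedding $\phi$: its $1$-skeleton lies in $\Pi = \RR^3\times\{0\}$ and every $2$-cell lies in the closed upper halfspace $\RR^3\times\RR_{\ge 0}$, touching $\Pi$ exactly along $G^\phi$. Working with the induced full complex throughout (legitimate by \cref{res:variants_equal}), the key combinatorial observation is that
$$\Xind(G*K_1) \;=\; \Xind(G)\,\cup\, C_G,$$
where $C_G$ is the cone over the graph $G$ with a fresh apex $v$, carrying one $2$-cell along each triangle $vv_iv_j$ with $v_iv_j\in E(G)$, and the two subcomplexes on the right overlap precisely in the $1$-complex $G$. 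Indeed, an induced cycle of $G*K_1$ avoiding $v$ is exactly an induced cycle of $G$, while any induced cycle through $v$ has length $3$: since $v$ dominates $G$, a path of length $\ge 2$ between two neighbours of $v$ has an interior vertex adjacent to $v$, hence a chord.

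It then remains to extend $\phi$ to an embedding of $\Xind(G*K_1)$. Take $v$ to be any point strictly below $\Pi$ and realize $C_G$ as the straight cone over $G^\phi$ with apex $v$. A one-line argument — an upward ray from $v$ meets the hyperplane $\Pi$ in a single point — shows that this cone is an embedding, that it lies in the closed lower halfspace $\RR^3\times\RR_{\le 0}$, and that it meets $\Pi$ exactly in $G^\phi$. Gluing it to the $+$-sliced embedding of $\Xind(G)$ along the common subcomplex $G^\phi$ yields an embedding of $\Xind(G*K_1)$: injectivity is automatic, since a point of the lower part and a point of the upper part can coincide only on $\Pi$, where both restrict to the embedding $G^\phi$. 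This proves part~\itm1. Parts~\itm2 and~\itm3 now follow formally: by part~\itm1 applied to $G*K_1$, the graph $G*K_2 = (G*K_1)*K_1$ is 4-flat iff $G*K_1$ is linkless iff $G$ is planar; by part~\itm1 applied to $G*K_2$, the graph $G*K_3 = (G*K_2)*K_1$ is 4-flat iff $G*K_2 = (G*K_1)*K_1$ is linkless iff $G*K_1$ is planar iff $G$ is outerplanar. For completeness I would record the less standard half of the first classical fact — that $H*K_1$ linkless forces $H$ planar — by noting that a $K_5$ or $K_{3,3}$ minor in $H$ produces a $K_6$ or $K_{3,3,1}$ minor in $H*K_1$, both non-linkless by the excluded-minor characterization of linkless graphs.

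The main obstacle is the backward direction of part~\itm1, and within it the halfspace bookkeeping: verifying the decomposition $\Xind(G*K_1)=\Xind(G)\cup C_G$, and checking that the straight cone placed below $\Pi$ is genuinely an embedding that glues cleanly onto the sliced embedding above. Everything else, including the reductions giving parts~\itm2 and~\itm3, is routine once part~\itm1 is in hand.
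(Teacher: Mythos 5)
Your proposal is correct and follows essentially the same route as the paper: reduce \itm2 and \itm3 to \itm1 via the classical facts that $H*K_1$ is linkless iff $H$ is planar and planar iff $H$ is outerplanar, get the forward direction of \itm1 from \cref{res:locally_linkless}, and get the converse by observing $\Xind(G*K_1)=\Xind(G)\cup C_G$ and attaching a straight cone in $\RR^3\times\RR_-$ to a $+$-sliced embedding supplied by \cref{res:Rplus_iff_linkless}. Your write-up merely supplies more detail than the paper (the chord argument showing induced cycles through the apex are triangles, and the ray argument for injectivity of the cone), all of which checks out.
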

\begin{proof}
    A graph is outerplanar if and only if its suspension is planar; and a graph is planar if and only if its suspension is linkless \cite{van1999colin}.
    It therefore suffices to prove \itm1.

    Let $x$ be the suspension vertex of $G*K_1$.
    If $G*K_1$ is 4-flat, then linklessness of $G$ follows from \cref{res:locally_linkless}  and the fact that $G\simeq N_{G*K_1}(x)$.
    For the converse, observe that $\Xind(G*K_1)$ is obtained from $\Xind(G)$ by adding a cone over $G$.
    If $G$ is linkless, then by \cref{res:Rplus_iff_linkless} there exists a $+$-sliced embedding $\phi\: \Xind(G)\to\RR^3\times\RR_+$. 
    We can extend $\phi$ to an embedding of $\Xind(G*K_1)$ by embedding the cone over $G$ into $\RR^3\times\RR_-$.
    Thus $G*K_1$ is 4-flat.
\end{proof}


The join operation can be used to define potentially interesting intermediate~classes between outerplanar, planar and linkless graphs. Let $\overline G$ denote the complement of $G$:

\begin{example}
    \label{ex:intermediate_coning}
    $G*\overline K_2$ 
     is 4-flat for some linkless graphs (\eg\ $G=K_5$ gives $G*\overline K_2$ $=K_7-e$), but not for others (\eg\ $G=K_{3,1,1,1}$ gives $G*\overline K_2=K_{3,3,1,1}$).
    Likewise, $G*\overline K_3$ is 4-flat for some planar graphs (\eg\ $G=K_4$ gives $G*\overline K_3$ $=K_7-\Delta$), but not for others (\eg\ $G=K_{3,1,1}$ gives $G*\overline K_3=K_{3,3,1,1}$).
\end{example}

\begin{problem}
    \label{prob:coning}
    \label{q:coning}
    Given a graph $H$, describe the class of graphs $G$ for which $G*H$ is 4-flat. 
\end{problem}


\subsection{4-flat and knotless graphs}
\label{sec:4flat_knotless}

A graph $G$ is \emph{knotless}, if it has an embedding $\phi\: G\to\RR^3$ for which each cycle $C\subseteq G$ is embedded as the trivial knot.
A graph that has no such embedding is called \emph{intrinsically knotted}.
The knotless graphs are often considered as another natural continuation from planar and linkless graphs,  
and one could ask about their relation to 4-flat graphs.
The following two examples demonstrate that these two classes are generally unrelated:



\begin{example}
    \label{ex:knotless_notimpl_4flat}
    While the \DY-family of $K_{3,3,1,1}$ consists entirely of intrinsically knotted graphs, seven out of the 20 members of the \DY-family of $K_7$ are in fact linkless \cite[Table 1]{goldberg2014many}.
    Yet, being Heawood graphs, none of them is 4-flat.
    Thus, there are graphs that are knotless but not  4-flat.
\end{example}

\begin{example}
    \label{ex:4flat_notimpl_knotless_sliced}
    There exists a linkless graph $G$ whose suspension $G*K_1$ is not knotless \cite{foisy2003newly}.
    However, by \cref{res:linkless_planar_outerplanar} \itm1, $G*K_1$ is 4-flat.
    Thus there are graphs that are 4-flat but not knotless.
\end{example}

\subsection{Clique sums of 4-flat graphs}
\label{sec:clique_sums}

Given two graphs $G_1$ and $G_2$ both of which contain a $k$-clique $K_k$, a \emph{$k$-clique sum} $G_1 *_k G_2$ is a graph obtained by ``gluing'' $G_1$ and $G_2$ by identifying a $K_k$ subgraph of $G_1$ with one of $G_2$.

\begin{proposition}
    \label{res:3_clique_sums}
    The class of 4-flat graphs is closed under 3-clique sums.
\end{proposition}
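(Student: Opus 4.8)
The plan is to reduce the statement about 4-flat graphs to the embeddability of full complexes, and then build an embedding of $\Xany(G_1 *_3 G_2)$ by gluing embeddings of $\Xany(G_1)$ and $\Xany(G_2)$ along the common triangle. Write $G = G_1 *_3 G_2$, let $T$ be the $3$-clique along which the sum is performed, with vertices $a,b,c$ and edges $ab, bc, ca$. The first observation is that every cycle of $G$ is either contained in $G_1$, or contained in $G_2$, or can be split using the triangle $T$: if a cycle $C$ enters and leaves the shared vertex set $\{a,b,c\}$, then since $C$ can meet $\{a,b,c\}$ in at most the three vertices, $C$ decomposes (as a homology sum, using chords among $a,b,c$, which are all present since $T$ is a clique) into cycles each lying entirely in $G_1$ or entirely in $G_2$, together possibly with the triangle $T$ itself. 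By \cref{res:variants_equal} it suffices to produce an embedding of $\Xind(G)$ (or a complex on $G$ containing all induced cycles), and by the cloning/joining machinery (\cref{res:cloning_embeds}, \cref{res:joining_embeds}) it suffices to embed a complex containing, for each $i$, all cycles of $G_i$, plus a $2$-cell along $T$, since any other $2$-cell of $\Xany(G)$ can then be built by cloning these and joining along shared edges. Concretely, the target complex is $\Xany(G_1) \cup_T \Xany(G_2)$ where we additionally ensure a $2$-cell is attached along $T$ in at least one of the two pieces; note $T$ bounds a $2$-cell in each $\Xany(G_i)$ automatically since $T$ is a triangle of $G_i$.

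The second and geometric step is the actual gluing. Since $G_i$ is $4$-flat, $\Xany(G_i)$ is embeddable; by \cref{res:embedd_G_in_subspace} we may take a sliced embedding $\phi_i \colon \Xany(G_i) \to \RR^4$, so that $G_i$ lands in $\Pi = \RR^3 \times \{0\}$. Applying an ambient PL homeomorphism of $\RR^3$ (extended to $\RR^4$) we may arrange that $\phi_1$ and $\phi_2$ agree on $T$, mapping it to a fixed triangle $T^* \subset \Pi$; this uses that any two PL-embedded triangles in $\RR^3$ are ambient isotopic. Now I want to place the images of $\Xany(G_1) - T$ and $\Xany(G_2) - T$ so that they meet only along $T^*$. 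The idea is to separate the two pieces by a $3$-dimensional hyperplane: after a further ambient PL isotopy supported near $\Pi$, push the interior of $\phi_1(\Xany(G_1))$ into the open halfspace $\RR^3 \times \RR_{>0}$ except for $T^*$ itself, and push $\phi_2(\Xany(G_2))$ into $\RR^3 \times \RR_{<0}$ except for $T^*$. Such a pushoff is possible because $\phi_i$ is sliced, so $G_i$ already sits in the separating hyperplane and only the $2$-cells need to be moved; one can cone $2$-cells off their boundary into one side, as in the proof of \cref{res:Rplus_iff_linkless}, or more simply use that $T^*$ has a neighbourhood in $\RR^4$ in which both images look like standard cones and nudge accordingly. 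The union of the two nudged embeddings is then a PL embedding of $\Xany(G_1) \cup_T \Xany(G_2)$, since the only possible intersections were along $\Pi$ and those have been resolved except on $T^*$, where the two complexes genuinely share $T$. This proves $G$ is $4$-flat.

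The main obstacle I expect is the separation step: making the pushoff of $\phi_i$ off the hyperplane $\RR^3 \times \{0\}$ be an \emph{embedding} that is the identity on $G_i$ and injective globally, rather than merely an immersion. A clean way to handle it is to work locally near $T^*$: choose a link neighbourhood $B \subset \RR^4$ of (a point of) $T^*$, or better, a regular neighbourhood $N$ of $T^*$ in $\RR^4$ meeting $\phi_i(\Xany(G_i))$ in a standard cone configuration; outside $N$ the two complexes already intersect only in $\Pi \setminus T^*$, which can be removed entirely by a generic perturbation translating one piece in the $\RR$-direction (this is where $2 + 2 < 4 + 1$ genericity, i.e.\ that two $2$-complexes in $\RR^4$ generically meet in a $0$-complex, plus the sliced structure, does the work — in fact after slicing $G_1 \cap G_2$ in $\Pi$ is exactly $T$, and $G_i \cap (\text{$2$-cells of the other})$ is likewise controlled). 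Alternatively, one can bypass the geometry and argue combinatorially: build $\Xany(G)$ from $\Xany(G_1)$ by the operations ``add a vertex and cone over a subgraph'' — but $G$ is not generally obtained from $G_1$ that way, so the direct geometric gluing seems to be the right route, and the hyperplane-separation lemma, while intuitively clear, is the step that needs the most care to state and verify in the PL category.
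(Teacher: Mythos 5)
Your combinatorial reduction is fine and matches the paper's: every induced cycle of $G_1 *_3 G_2$ lies entirely in $G_1$ or in $G_2$, so by \cref{res:variants_equal} it suffices to embed $\Xany(G_1)\cup_T \Xany(G_2)$. The problem is the geometric gluing step, which as written cannot work. Placing $\phi_1(\Xany(G_1))$ in the closed upper halfspace $\RR^3\times\RR_+$ with its $1$-skeleton in $\Pi=\RR^3\times\{0\}$ is precisely a $+$-sliced embedding, and by \cref{res:Rplus_iff_linkless} such an embedding of $\Xany(G_1)$ exists \emph{if and only if $G_1$ is linkless}. So your hyperplane-separation step already fails for $G_1=G_2=K_6$: both are $4$-flat (\cref{ex:K6_4_flat}) but neither is linkless, so neither full complex can be pushed into a halfspace off its sliced $1$-skeleton. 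Your fallback — removing the intersections outside a neighbourhood of $T^*$ by a generic perturbation — also does not go through: two $2$-complexes in $\RR^4$ generically meet in a $0$-dimensional set, i.e.\ in finitely many points, not in the empty set, and removing such point intersections is exactly the hard part of the $2\to 4$ embedding problem (it is where the van Kampen obstruction lives). So the obstacle you flag at the end as "needing the most care" is in fact a genuine obstruction to this route, not a technicality.

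The paper circumvents this by never trying to separate the two complexes with a hyperplane. Instead, view each $\phi_i$ as an embedding into $\Sph^4$, pick an interior point $x_i$ of the $2$-cell $c_i$ filling the triangle $\Delta_i$ at which the embedding is locally flat, remove a small link-neighbourhood ball $B_i$ of $x_i^{\phi_i}$, and glue $\Sph^4\setminus B_1$ to $\Sph^4\setminus B_2$ along the boundary $3$-spheres by a homeomorphism matching the two unknotted circle links $\partial B_i\cap c_i^{\phi_i}$. This connected-sum construction produces one copy of $\Sph^4$ containing both complexes, meeting only in a cylinder with boundary $\Delta_1\cup\Delta_2$; collapsing that cylinder (\cref{res:collapsing_cylinder}) identifies the two triangles and yields an embedded complex containing $\Xind(G_1*_3G_2)$. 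The key point you are missing is that the gluing should happen through the \emph{interiors of the filling $2$-cells} (where the local picture is a standard disk in a ball, independent of any global linking), rather than along the hyperplane containing the $1$-skeleta.
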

\begin{proof}
    Let $G_1$ and $G_2$ be two 4-flat graphs with 3-cliques $\Delta_i\subseteq G_i$ and embeddings $\phi_i:X(G_i)\to \Sph^4$.
    Let $c_i\subseteq X(G_i)$ be the 2-cell with $\partial c_i=\Delta_i$ and~$x_i\in\Int(c_i)$ an interior point at which $\phi$ is \emph{locally flat},
    that is, given~a~link~neighbor\-hood $B_i\subset\RR^4$ of $x_i^{\phi_i}$, the link $\partial B_i\cap c_i^{\phi_i}$ is unknotted in $\partial B_i$.
    There is then~a~homeomorphism $(\partial B_1,\partial B_1\cap c_1^{\phi_1})\simeq (\partial B_2,\partial B_2\cap c_2^{\phi_2})$.
    We identify $\Sph^4\setminus B_1$ and $\Sph^4\setminus B_2$ and their embedded complexes $(\Xany(G_i),\phi_i)$ along this homeomor\-phism, which yields a new $\Sph^4$ with an embedded complex $X'$.
    This new complex $X'$ contains a cylinder with boundary $\Delta_1\cup \Delta_2$.
    We collapse this cylinder (thereby identifying $\Delta_1$ and~$\Delta_2$), which preserves embeddability by \cref{res:collapsing_cylinder}.
    The resulting complex $X''$ has~1-skeleton $G_1 *_3 G_2$.
    Since each induced cycle of $G_1 *_3 G_2$ is contained in either $G_1$ or $G_2$, we find $\Xind(G_1 *_3 G_2)$ as a sub-comples of $X''$. 
    Thus $G_1 *_3 G_2$ is  4-flat by \cref{res:variants_equal}.
\end{proof}

It follows easily that $k$-clique sums for $k<3$ also preserve 4-flatness. For example, each 2-clique can be turned into a 3-clique by cloning and subdividing edges, which preserves 4-flatness by \cref{res:cloning_edges_4_flat}. 


\begin{question}
    \label{q:4_clique_sum}
    Do 4-clique sums preserve 4-flatness?
\end{question}

This question generalizes \cref{q:stellifying_K4}: observe that adding a suspension over a complete subgraph $K_4$ is the same as a 4-clique sum with $K_{5}$. 



Note that 5-clique sums do \emph{not} preserve 4-flatness: starting from three disjoint copies of $K_6$ (which is 4-flat) we can use 5-clique sums to glue them at $K_5$-subgraphs to obtain $K_5*\bar K_3$. 
The full complex of this graph~contains the triple cone over $K_5$, which is not embeddable.

\tempnewpage

\section{Heawood graphs are excluded minors}
\label{sec:Heawood_excluded_minor}

Van der Holst conjectured that the Heawood graphs are exactly the excluded~minors for the class of 4-flat graphs (\cref{conj:Heawood}).
Comparing with the analogous result of Robertson,  Seymour \& Thomas for  linkless graphs \cite{RoSeThoSac}, proving \cref{conj:Heawood}\ seems very hard and out of reach of our techniques.

The first step towards this conjecture is to confirm that each of the 78 Heawood graphs is indeed an excluded~minor, and we do so with the following theorem. The difficulty in proving this is not only due to the large number of graphs that need to be checked; more importantly, as mentioned in the introduction, there is no known algorithm for checking whether a given graph is 4-flat. 


\begin{theorem}
\label{res:all_Heawood_are_excluded}
    Each graph of the Heawood family is an excluded minor for the class of 4-flat graphs.
\end{theorem}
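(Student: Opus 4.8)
The plan is to split the statement into two parts: first, that each Heawood graph is \emph{not} 4-flat; second, that deleting any edge or contracting any edge of a Heawood graph yields a 4-flat graph. The first part is already available: van der Holst showed that $K_7$ and $K_{3,3,1,1}$ are not 4-flat, and since the whole Heawood family is generated from these two graphs by \DY- and \YDtrafos, it suffices to note that these transformations preserve non-4-flatness along the family. (One should be careful here: \cref{res:DY_YD_4_flat} only gives that \DYtrafos\ preserve 4-flatness, i.e.\ that \YDtrafos\ preserve \emph{non}-4-flatness; the reverse implications are exactly what is open. But van der Holst already established non-4-flatness for \emph{all} Heawood graphs via non-vanishing van Kampen obstructions of their full complexes, so we may simply cite this.)

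The real content is minor-minimality. Fix a Heawood graph $H$ and an edge $e$; we must show $H-e$ and $H/e$ are 4-flat. The first reduction is to pass to a canonical list: up to isomorphism there are only finitely many pairs $(H,e)$, and in fact by the vertex-/edge-transitivity properties within the family and the $\DY$/$\YD$ symmetry one can reduce to checking a small number of representative minors. The key structural fact I would exploit is \cref{res:linkless_planar_outerplanar}: a suspension $G*K_1$ is 4-flat iff $G$ is linkless, and a double suspension $G*K_2$ is 4-flat iff $G$ is planar. Many proper minors of Heawood graphs turn out to be (subgraphs of) suspensions of linkless graphs or double suspensions of planar graphs, in which case 4-flatness follows immediately. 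For the $K_7$ branch: $K_7-e$ is 4-flat (\cref{ex:K7-e_4_flat}), and every proper minor of $K_7$ is a minor of $K_7-e$, hence 4-flat; then one propagates 4-flatness of proper minors along \DY- and \YDtrafos\ using \cref{res:DY_YD_4_flat} (for the $\DY$ direction) together with the observation (\cref{res:stellifying_observations}) that a \YDtrafo\ applied to a \emph{proper minor} can be realized, after the transformation, as a minor of a graph we already know is 4-flat — the point being that reverse stellification at a triangle interacts well with minors even though at longer cycles it does not. For the $K_{3,3,1,1}$ branch one argues analogously starting from the fact that each proper minor of $K_{3,3,1,1}$ is a minor of a suspension of a linkless graph (e.g.\ contracting or deleting reduces the ``apex part'' to at most one dominating vertex over a $K_{3,3}$-minor-free, hence linkless, graph), so \cref{res:linkless_planar_outerplanar}~\itm1 applies.

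The systematic way to make this airtight is a finite case analysis organized as a ``minor-minimality is inherited under $\DY$/$\YD$'' lemma: if $G$ is not 4-flat but every proper minor of $G$ is, and $G'$ is obtained from $G$ by a \DYtrafo\ at a triangle, then $G'$ is not 4-flat (van der Holst), and every proper minor of $G'$ is 4-flat — this last step is where one checks that deleting/contracting an edge of $G'$ either lands inside a graph already handled, or corresponds via the inverse \YDtrafo\ to a proper minor of $G$, or is a minor of some explicit 4-flat graph (a suspension of a linkless graph, or $K_7-e$, or a $3$-clique sum of such by \cref{res:3_clique_sums}). Running this inductively along the $78$-graph family, starting from the two base cases $K_7$ and $K_{3,3,1,1}$ (for which minor-minimality is checked directly using \cref{ex:K7-e_4_flat} and \cref{res:linkless_planar_outerplanar}), yields the theorem.

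The main obstacle I expect is the bookkeeping in the inductive step for the edges of $G'$ that are \emph{new} (the three spoke edges created by a \DYtrafo, or the edges destroyed/created by a \YDtrafo): for these, deleting or contracting does not obviously correspond to a minor of $G$, and one must argue by hand that the resulting graph embeds — typically by recognizing it as a subgraph of a suspension of a linkless graph or a double suspension of a planar graph, and invoking \cref{res:linkless_planar_outerplanar}. Verifying planarity/linklessness of the relevant small graphs is routine but must be done for enough representatives to cover all $78$ graphs under the family's symmetries; a computer check (as the authors allude to) is the clean way to discharge it, with the theoretical tools above guaranteeing that such a check is \emph{sufficient} despite 4-flatness not being known to be decidable in general.
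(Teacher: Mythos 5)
Your overall strategy coincides with the paper's. Non-4-flatness of all 78 graphs is quoted from van der Holst, and minor-minimality reduces to showing that each $H-e$ and $H/e$ becomes planar after deleting two suitable vertices, hence is a subgraph of a double suspension of a planar graph and 4-flat by \cref{res:linkless_planar_outerplanar}~\itm2; the paper discharges this by exactly the exhaustive computer check you describe (\cref{sec:proof_by_computer}). Your ``minor-minimality is inherited under \DY'' lemma is also the paper's \cref{res:DY_preserves_excluded}, including the case split you anticipate: edges not meeting the new degree-3 vertex are handled via \cref{res:DY_YD_4_flat} applied to a proper minor of $G$, while for a spoke edge $vx$ one computes directly that $H/vx=G-yz$ and that $H-vx$ is $G-\{xy,xz\}$ with $yz$ subdivided, both 4-flat because $G$ is an excluded minor.

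The one genuine gap is in how you seed the induction. You start from the two base cases $K_7$ and $K_{3,3,1,1}$ and propagate along \emph{both} \DY- and \YDtrafos, but your inheritance lemma only works in the \DY\ direction, and the claim that ``reverse stellification at a triangle interacts well with minors'' is not substantiated --- that direction is precisely the open part of \cref{conj:DY_YD}, and no argument of the form ``a proper minor of $\YD(G)$ is a minor of something already known to be 4-flat'' is available in general. Since the Heawood family is generated using both transformations, \DY-propagation from $K_7$ and $K_{3,3,1,1}$ alone does not reach all 78 members. The paper's fix is to run the bookkeeping in the opposite direction: every Heawood graph containing a degree-3 vertex is itself a \DYtrafo\ of a smaller Heawood graph, so the correct base cases are the seven members of minimum degree at least $4$. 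Two of these are $K_7$ and $K_{3,3,1,1}$ (already known to be excluded minors); the remaining five (\cref{fig:remaining_Heawood}) must be, and are, checked directly by the planar-after-deleting-two-vertices criterion. With that restructuring --- or by simply falling back on the full computer check over all 78 graphs, which you also propose and which is the paper's first proof --- your argument goes through.
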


We will offer two proofs. 
The first proof relies on computer help, enumerating all minors of all Heawood graphs and checking them one by one (\cref{sec:proof_by_computer}). 
Our second proof first uses results from \cref{sec:4_flat} to reduce the case analysis to merely five Heawood graphs. 

The general procedure for both proofs is as follows:
given a Heawood graph $G$, we~step through its minors $H$ (whereby it suffices to consider minors of the form $G-e$ and $G/e$) and show that each one is 4-flat.
We do so by finding two vertices~$v,w$ $\in V(H)$ for which $H':=H-\{v,w\}$ is planar. 
This shows that $H$ is a subgraph of $H'*K_2$ and therefore 4-flat by \cref{res:linkless_planar_outerplanar}~\itm2.

Note that this approach for detecting 4-flat graphs was not guaranteed to \mbox{succeed} a priori: 
there are 4-flat graphs that are not contained in the double suspension of a planar graph (\eg\ the disjoint union of two $K_6$).
Computationally we found that this works at least for the minors of Heawood graphs.
The following question remains:

\begin{question}
    \label{q:are_all_4_flat_suspension_of_planar}
    If $G$ is an excluded minor for the class of 4-flat graphs and $H$ is a minor of $G$, then are there two vertices $v,w\in V(H)$ so that $H-\{v,w\}$ is planar?

\end{question}

\subsection{Proof by computer}
\label{sec:proof_by_computer}

Our code for both enumerating all Heawood graphs and~performing the exhaustive case analysis can be found in \cref{sec:appenix_code}. 
The output of~the program confirms \cref{res:all_Heawood_are_excluded}.

\newcommand{\forb}[1]{\mathrm{Forb}(#1)}
\newcommand{\ex}[1]{\mathrm{Ex}(#1)}
\newcommand{\ff}{\ensuremath{\rm{F}^4}}


\subsection{Proof by hand}
\label{sec:proof_by_hand}

In \cref{sec:operations} we proved that 4-flat graphs~are closed under cloning edges (\cref{res:cloning_edges_4_flat}) and \DYtrafos\ (\cref{res:DY_YD_4_flat}).
This constitutes an affirmative answer to van der Holst's conjecture \cref{conj:cloning_edges} and part of \cref{conj:DY_YD}.
Assuming the truth of both conjectures, van der Holst~showed that all Heawood graphs are indeed excluded minors for the class of 4-flat graphs \cite[Lemma 1]{van2006graphs}. 
However, since we only verified the \DY-part of \cref{vdH_DY_YD} we cannot draw this conclusion yet.
\mbox{Our results~do} however sufficiently reduce the work that remains to be done, so as to allow for a proof by hand.

The following lemma is used to reduce the case analysis to only a handfull of graphs:

\begin{lemma}
\label{cor Ex DY}
\label{res:DY_preserves_excluded}
    Let $G$ be an excluded minor for the class of 4-flat graphs and let~$H$~be a graph obtained from $G$ by a \DYtrafo.
    Then either $H$ is 4-flat, or~$H$~is~itself an excluded minor for the class of 4-flat graphs.
\end{lemma}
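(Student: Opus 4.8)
Since the conclusion is a disjunction, we may assume throughout that $H$ is not 4-flat and show that $H$ is then an excluded minor, i.e.\ that every proper minor of $H$ is 4-flat. Write $\Delta=v_1v_2v_3$ for the triangle of $G$ that is stellified, $w$ for the new centre vertex, and $e_i:=wv_i$ for the three new edges, so that $E(H)=\{e_1,e_2,e_3\}\cup\big(E(G)\setminus\{v_1v_2,v_1v_3,v_2v_3\}\big)$. The plan rests on two preliminary observations. First, an excluded minor has no isolated vertex (otherwise deleting it would produce a non-4-flat proper minor), hence $G$ has none, and then neither does $H$: the vertex $w$ has degree three, each $v_i$ loses its two triangle edges but gains $e_i$ (so keeps degree $\ge 1$), and all other vertices keep their degree. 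Consequently any proper minor of $H$ is obtained by first deleting or contracting some edge $e\in E(H)$ (it cannot begin by deleting an isolated vertex), and is therefore a minor of $H-e$ or of $H/e$; since 4-flatness is minor-closed, it suffices to prove that $H-e$ and $H/e$ are 4-flat for every $e\in E(H)$. Second, subdividing an edge of a graph preserves 4-flatness, since it induces a homeomorphism between the corresponding full complexes (the cycles of the subdivided graph are exactly the subdivisions of the cycles of the original).

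For each $e$ the strategy is to build $H-e$ or $H/e$ out of a \emph{proper} minor of $G$ --- which is 4-flat --- using only 4-flatness-preserving operations. Three cases are routine. If $e=e_i$, then (with $\{j,k\}=\{1,2,3\}\setminus\{i\}$) the graph $H-e_i$ is the subdivision at $w$ of $G-v_iv_j-v_iv_k$, in which the third triangle edge $v_jv_k$ is still present and is the one being subdivided; and $H/e_i$ is isomorphic to $G$ with the triangle edge not incident to $v_i$ removed, since contracting $wv_i$ re-creates the two triangle edges at $v_i$ via $e_j,e_k$ and leaves the third absent. Both $G-v_iv_j-v_iv_k$ and $G$ minus one triangle edge are proper minors of $G$, so the preliminary observations settle these. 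If $e\in E(G)\setminus\{v_1v_2,v_1v_3,v_2v_3\}$, then $e$ is not a triangle edge, so $H-e$ is obtained from the proper minor $G-e$ by performing the \DYtrafo\ at $\Delta$; since $G-e$ is 4-flat, $H-e$ is 4-flat by \cref{res:DY_YD_4_flat}.

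The remaining --- and main --- case is contracting an old edge $e\in E(G)\setminus\{v_1v_2,v_1v_3,v_2v_3\}$. Its image $\bar\Delta$ in $G/e$ is again a triangle, because $e$ meets $\{v_1,v_2,v_3\}$ in at most one vertex, so the contraction keeps the three vertices distinct and keeps all three triangle edges. One is tempted to claim that $H/e$ is obtained from $G/e$ by the \DYtrafo\ at $\bar\Delta$; this does hold when $e$ is disjoint from $\Delta$, but it \emph{fails} exactly when $e=v_1u$ with $u$ adjacent to another vertex of $\Delta$, since then the contraction produces an edge between two vertices of $\bar\Delta$ that survives the \DYtrafo\ as well. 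The way around this is to never delete any triangle edge of $G/e$: let $L$ be $G/e$ together with one additional vertex joined to the three vertices of $\bar\Delta$. Then $G/e$ is a 4-flat proper minor of $G$; adding a suspension vertex over a triangle preserves 4-flatness (\cref{res:stellifying_observations}\,\itm2, a special case of \cref{res:3_clique_sums}); so $L$ is 4-flat. Identifying the new vertex of $L$ with $w$ exhibits $H/e$ as a subgraph of $L$ --- precisely, as $L$ with those of the three edges of $\bar\Delta$ deleted whose only pre-image in $G$ was a triangle edge (between one and three of them). Since 4-flatness is minor-closed, $H/e$ is 4-flat.

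Putting the cases together, $H-e$ and $H/e$ are 4-flat for all $e\in E(H)$, hence every proper minor of $H$ is 4-flat, and as $H$ is not 4-flat it is an excluded minor. The step I expect to be the main obstacle is this last case: the naive identification of $H/e$ with a \DYtrafo\ of $G/e$ has to be given up, and one must keep careful track of which edges of $\bar\Delta$ survive in $H/e$ (depending on whether the far endpoint of $e$ is adjacent to none, one, or both of the other two triangle vertices) --- but in every sub-case $H/e$ stays a subgraph of the 4-flat graph $L$, so the conclusion is uniform. (If $G$ is permitted to be a multigraph, the parallel-edge bookkeeping becomes slightly more elaborate, without affecting the argument.)
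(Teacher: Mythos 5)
Your proof is correct, and for the most part it mirrors the paper's: the same reduction to $H-e$ and $H/e$, the same identifications when $e$ is one of the three new edges at the centre vertex (the paper's Case 2: $H/e\cong G-yz$, and $H-e$ is a subdivision of $G-\{xy,xz\}$), and the same use of \cref{res:DY_YD_4_flat} when an old edge is deleted. The one place you genuinely diverge is the contraction of an old edge $e\in E(G)$. The paper folds this into its Case 1 by asserting that $H/e$ is the \DYtrafo\ of $G/e$; the obstruction you raise (an edge $uv_2$ getting merged onto the triangle edge $v_1v_2$, so that the \DY\ operation would wrongly delete it) is an artifact of simple-graph contraction. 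The paper explicitly works with multigraphs, and under multigraph contraction $G/e$ keeps both parallel copies of $\bar v_1 v_2$; the \DYtrafo\ removes only the copy belonging to the triangle, and the identification of $H/e$ with the \DYtrafo\ of $G/e$ is then exact, so the paper's shorter argument is sound under its conventions. Your workaround --- exhibiting $H/e$ as a subgraph of $G/e$ plus a cone vertex over the image triangle and invoking \cref{res:stellifying_observations}~\itm2 (equivalently \cref{res:3_clique_sums}) together with minor-closedness --- is equally valid, and has the mild advantage of never applying the \DY\ lemma to a graph with parallel edges; the price is an extra dependency on the suspension-over-a-triangle/3-clique-sum result, which the paper's proof does not need. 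Your explicit remarks that excluded minors have no isolated vertices and that subdivision preserves 4-flatness fill in small steps the paper leaves implicit.
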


\begin{proof}
    It suffices to show that either $H$ is 4-flat, or every proper minor $H'$ of $H$ is 4-flat.
    Since the class of 4-flat graphs is minor-closed, it suffices to consider the cases $H'=H-e$ and $H'=H/e$ for some edge $e\in E(H)$. 
    
    Let $x,y,z\in V(H)$ be the vertices of the triangle $\Delta\subset G$ on which we performed the \DYtrafo, and let $v$ by the resulting vertex of degree three in $H$.\nls
    There are two cases to be considered.

    \textit{Case 1:} $e$ does \textit{not} contain $v$.
    Then $e$ is also an edge of $G$, and $\Delta$ is also a triangle in $G':=G-e$ \resp\ $G':=G/e$, and $H'$ is obtained from $G'$ by a \DYtrafo\ on $\Delta$.
    Since $G$ is an excluded minor, $G'$ is 4-flat, and so $H'$ is 4-flat too by \cref{res:DY_YD_4_flat}.
    
    \textit{Case 2:} $e$ does contain $v$.
    %
    We assume without loss of generality that $e=vx$. 
    Then $H/e$ coincides with $G - yz$, which is 4-flat. 
    Similarly, $H-e$ is obtained from $G-\{xy,xz\}$~by~subdividing the edge $yz$, which again is 4-flat.  
\end{proof}



By \cref{res:DY_preserves_excluded}, to prove \cref{res:all_Heawood_are_excluded} it suffices to consider the Heawood graphs that are not \DYtrafos\ of other Heawood graphs. 
%
If those are shown to be excluded minors for the class of 4-flat graphs, then it follows that all Heawood graphs are excluded minors (using van der Holst's result that all Heawood graphs are not 4-flat).

Note that these are exactly the Heawood graphs of minimum degree $\ge 4$, because we cannot perform a \YDtrafo\ on them.
Computer code for listing these Heawood graphs can be found in \cref{sec:appenix_remaining_Heawood}.
This list consists of $K_7$ and $K_{3,3,1,1}$ (which are known to be excluded minors; \cite[Lemma 2]{van2006graphs}) and five other graphs shown in \cref{fig:remaining_Heawood}.
We call them the \emph{remaining Heawood graphs}.


\begin{figure}[h!]
    \centering
    \includegraphics[width=0.95\textwidth]{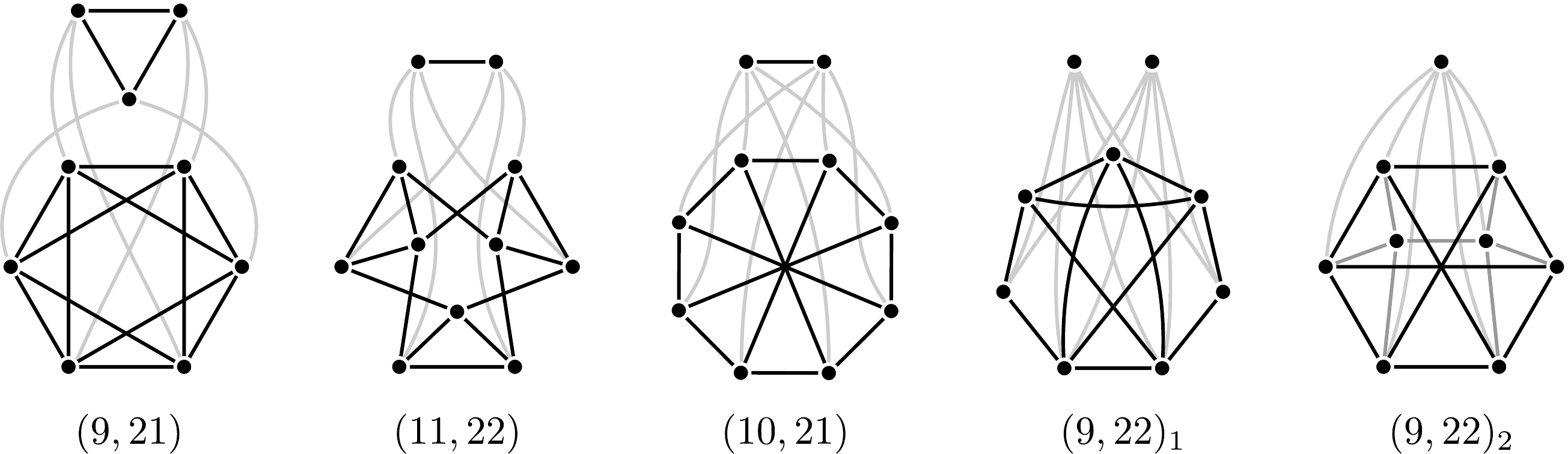}
    \caption{The five remaining Heawood graphs. 
    The edges are colored solely for visualization. The name $(v,e)$ indicates that the graph has $v$ vertices and $e$ edges, with a sub-index to resolve ambiguities.}
    \label{fig:remaining_Heawood}
\end{figure}


In the following subsections we treat the remaining Heawood graphs one by one. For each of these graphs $H$, and each $e\in E(H)$, we will prove that both $H-e$ and $H/e$ can be made planar by deleting two vertices; as mentioned above, this implies that $H-e$ and $H/e$ are 4-flat. Our graphs have many automorphisms, and this will help us reduce the number of cases by considering only one edge $e$ from each orbit of the automorphism group of $H$.

\subsubsection{The case $(9,21)$}

There are at most four
orbits of edges under the automorphisms of the graph, as colored in \cref{FigHexagon}~(a). The bottom half is the square of a hexagon, which has a planar embedding $D$ as shown in \cref{FigHexagon}~(b). 


\begin{figure}[h!]
\begin{center}
\begin{overpic}[width=.7\linewidth]{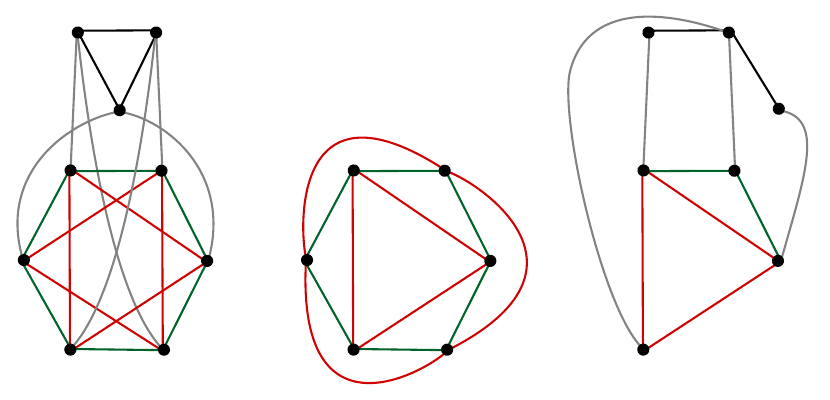} 
\put(7,47){$u$}
\put(20,46){$v$}
\put(88,47){$v$}
\put(76,47){$u$}
\put(11,-2){$(a)$}
\put(46,-2){$(b)$}
\put(82,-2){$(c)$}
\end{overpic}
\end{center}
\caption{Proving that all proper minors of $(9,21)$ are 4-flat.} \label{FigHexagon}
\end{figure}

If a red or green edge is deleted, then two of the faces are joined into a 4-gonal face $F$, which has two opposite vertices $x,y$ in its boundary. Embed the top vertex $u$ that sends edges to  $x,y$ inside $F$, and delete the two remaining top vertices.

Similarly, if a grey edge $ux$ is deleted, we embed $u$ with its one remaining grey edge inside a face of $D$, and delete the two remaining top vertices.

If a black edge $uv$ is deleted, remove 
two bottom vertices to obtain a graph embeddable as in \cref{FigHexagon}~(c).

If a grey edge $ux$ is contracted, we are in an easier situation than the previous one: we can delete the resulting vertex along with one more bottom vertex to obtain a subgraph of the previous case.

If a black edge $uv$ is contracted, delete it along with the remaining top vertex to obtain $D$.

Finally, if a red or green edge $xy$ is contracted, delete it along with a bottom vertex $z$ such that $x,y,z$ are consecutive along the green cycle. We are left with the top triangle, a triangle of $D$, and a perfect matching joining these triangles. This graph is a triangular prism, hence planar.

\subsubsection{The case $(11,22)$}

There are five orbits of edges as colored in \cref{FigCrossings}~(a). Let $u,v$ denote the top vertices. 

\begin{figure}[h!]
\begin{center}
\begin{overpic}[width=.7\linewidth]{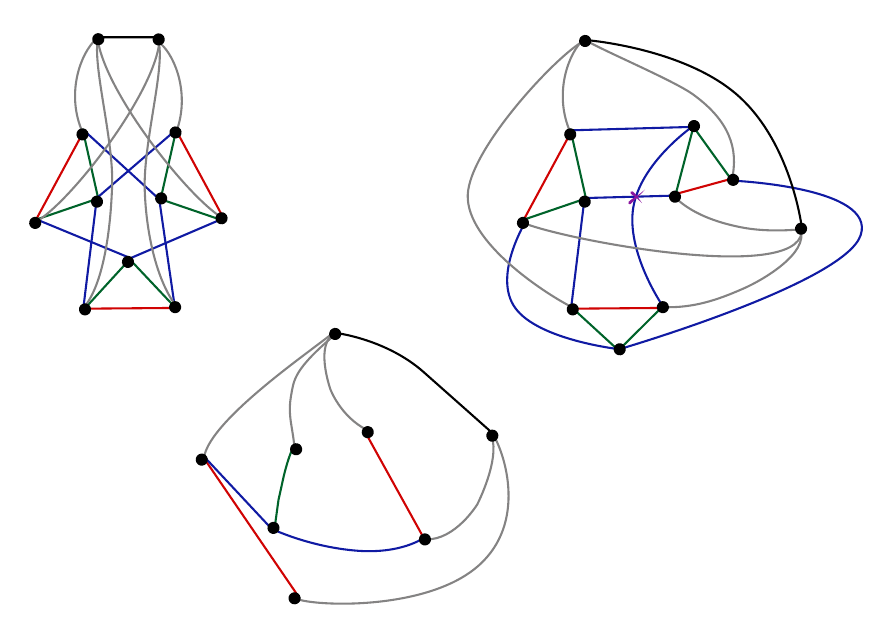} 
\put(8,67){$u$}
\put(20,67){$v$}
\put(5.5,55){$w$}
\put(1,45){$z$}
\put(21,36){$x$}
\put(26.5,45.5){$d$}
\put(15.5,48){$y$}
\put(6,36){$a$}
\put(12,46.5){$b$}
\put(21,56){$c$}
\put(14.5,42.2){$t$}

\put(93,44.5){$v$}
\put(64,67){$u$}
\put(61,55.5){$w$}
\put(57,45){$z$}
\put(77,36){$x$}
\put(85.5,50.5){$d$}
\put(78,58){$y$}
\put(62.5,35){$a$}
\put(68,49){$b$}
\put(76,45.5){$c$}
\put(71,28){$t$}

\put(38,34){$u$}
\put(58,21){$v$}
\put(49,10.5){$x$}
\put(30.5,1){$z$}
\put(43,21){$a$}
\put(32,11){$y$}
\put(30,20){$d$}
\put(20,19){$w$}

\put(10,29){$(a)$}
\put(80,29){$(b)$}
\put(45,-3){$(c)$}
\end{overpic}
\end{center}
\caption{Proving that all proper minors of $(11, 22)$ are 4-flat.} \label{FigCrossings}
\end{figure}

A different drawing, with five crossings of edges, is shown in \cref{FigCrossings}~(b). If the blue $xy$ or grey $zv$ edge is deleted, we remove vertices $t,b$ leaving no crossings. If the black edge $uv$ is deleted, we remove vertices $a,x$. Notice that if vertices $u,v$ are deleted, then only one crossing remains, namely the one marked with a purple $\times$ symbol. Moreover, if one of the red $cd$ or green $yd$ edges are deleted, then we can reroute the edge $xy$ to avoid any crossing. Thus if a red or green (or blue) edge is deleted, we can delete $u,v$ to obtain a planar graph.

If a red or green edge $e$ is contracted, we remove the resulting vertex as well as the third vertex forming a red-green triangle with $e$. Since all crossings of \cref{FigCrossings}~(b) involve edges incident with the bottom triangle, this results in a planar graph. If the black $uv$ or grey $vz$ edge is contracted, we remove the resulting vertex as well as the remaining vertex among $u,v,z$. Then only the crossing marked with a $\times$ symbol remains, and we can re-route the $xy$ edge to avoid it. Finally, if the  blue $bc$ edge is contracted, we remove 
    the resulting vertex along with $t$, and draw the resulting graph as in \cref{FigCrossings}~(c).                  

\subsubsection{The case $(10,21)$}

There are four orbits of edges as colored in \cref{FigOctagon}~(a). The bottom part has a drawing with only one crossing as shown in \cref{FigOctagon}~(b). 

\begin{figure}[h!] 
\begin{center}
\begin{overpic}[width=.7\linewidth]{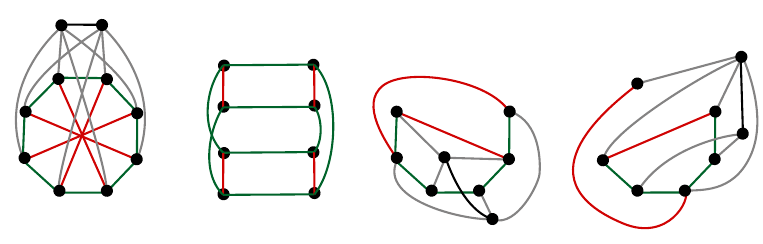} 
\put(6.5,31){$u$}
\put(12.5,31){$v$}
\put(6.5,18.5){$a$}
\put(12.3,18.5){$b$}
\put(0.5,18){$x$}
\put(20,17.5){$y$}
\put(0.5,11){$z$}

\put(98,26){$u$}
\put(97,11){$v$}
\put(65,19){$y$}
\put(50,19){$x$}

\put(80.5,20.5){$a$}
\put(55,11){$u$}
\put(65,0){$v$}
\put(91,18){$y$}
\put(76.5,11){$z$}

\put(9,-2){$(a)$}
\put(33,-2){$(b)$}
\put(58,-2){$(c)$}
\put(87,-2){$(d)$}
\end{overpic}
\end{center}
\caption{Proving that all proper minors of $(10,21)$ are 4-flat.} \label{FigOctagon}
\end{figure}

If we delete a green edge, then the aforementioned crossing disappears, and so we obtain a planar subgraph after removing $u,v$.

Similarly, if a black or grey edge is  contracted, then by removing it along with one more vertex we can obtain the subgraph of \cref{FigOctagon}~(b) where one of the vertices involved in the crossing is removed. 

Consider now the subgraph obtained by removing $a,b$, as drawn in \cref{FigOctagon}~(c). There is just one crossing, between the black edge and a green edge. Thus if the green edge $ab$ is contracted, we can delete it along with one of the vertices involved in said crossing to obtain a planar subgraph. If the black edge is deleted, then we remove $a,b$. If a red edge is contracted, assume it is one incident with $a$, and remove it along with $b$; again the crossing of \cref{FigOctagon}~(b) thereby disappears.

For the remaining two cases, remove $x,b$ and consider the drawing of the resulting subgraph in \cref{FigOctagon}~(d). If a grey edge is deleted, the unique crossing disappears. Finally, if a red  edge is deleted, assume it is $zy$, and embed $v$ at a midpoint of where $zy$ used to lie.

\subsubsection{The case $(9,22)_1$}

    Let $u,v$ denote the top vertices, and note that $G-\{u,v\}$ is a subdivision of $K_5$.

\begin{figure}[h!] 
\begin{center}
\begin{overpic}[width=.7\linewidth]{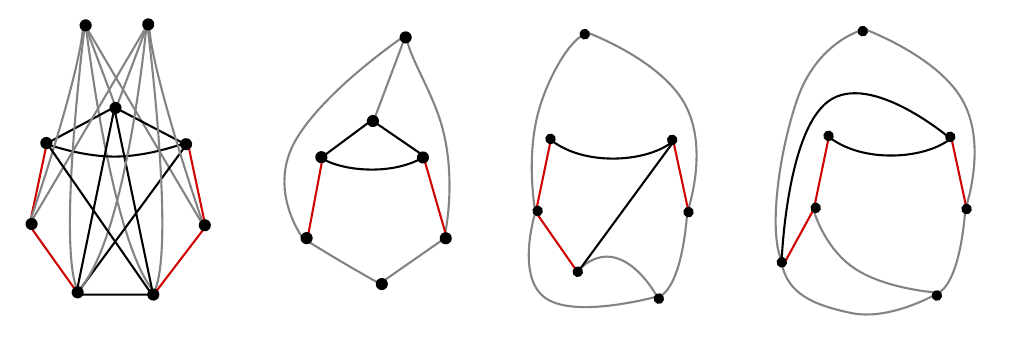} 
\put(6,32){$u$}
\put(15.5,32){$v$}
\put(11,24){$x_0$}
\put(-1,11){$x_1$}
\put(3,5){$x_2$}

\put(41,31){$v$}
\put(37,3){$u$}
\put(34,23.5){$x_0$}
\put(27,7){$x_1$}

\put(57,31){$u$}
\put(66,3){$v$}
\put(48.5,12){$x_1$}
\put(53,5){$x_2$}

\put(82,32){$u$}
\put(93,3){$v$}
\put(81,12){$x_1$}
\put(74,5){$x_2$}

\put(9.5,-1){$(a)$}
\put(35,-1){$(b)$}
\put(58,-1){$(c)$}
\put(84,-1){$(d)$}
\end{overpic}
\end{center}
\caption{Proving that all proper minors of $(9, 22)_1$ are 4-flat.} \label{FigK5}
\end{figure}        
    If one of the four red edges shown in \cref{FigK5}~(a) is contracted, then by deleting the contracted vertex as well as its red neighbour we are left with a subgraph of the double suspension of a 4-cycle, which is planar. 

    If an edge $ux$ of $u$ is deleted, we consider two subcases: a) if $x$ is the top vertex $x_0$, we delete the two bottom vertices. The result is a planar graph shown in \cref{FigK5}~(b). \\ 
    b) if not, then we may assume that $x$ is one of $x_1,x_2$ because of the symmetry. In this case we delete $x_0$, as well as $z$. The result is a planar graph shown in \cref{FigK5}~(c) or (d), depending on which of $x_1,x_2$ is $x$.

    If an edge $ux$ of $u$ is contracted, then we remove $v$ and the contracted vertex. We are left with a proper minor of $K_5$, hence a planar graph.

    By symmetry, if an edge of $v$
is deleted or contracted we are happy.

    In all other cases, we remove $u,v$. We are left with a subdivision of $K_5$, where the red edges are the ones arising from a subdivision. Thus if we delete any edge, or contract an edge that is not red, we obtain a planar graph.

\subsubsection{The case $(9,22)_2$}

There are seven orbits of edges as colored in \cref{FigCube}~(a). We embed $G - u$ with four crossings as in \cref{FigCube}~(b). If we delete an edge $e$ not incident with $u$, then we remove $u$ as well as one more vertex to obtain a planar subgraph as follows:
\begin{itemize}
    \item if $e=ay$ (red), we remove $c$;
    \item if $e=xb$ (green), we remove $c$;
    \item if $e=xc$ (black), we remove $y$;
    \item if $e=cy$ (pink), we remove $x$;
    \item if $e=cd$ (blue), we remove $x$.
\end{itemize}

\begin{figure}[h!] 
\begin{center}
\begin{overpic}[width=.7\linewidth]{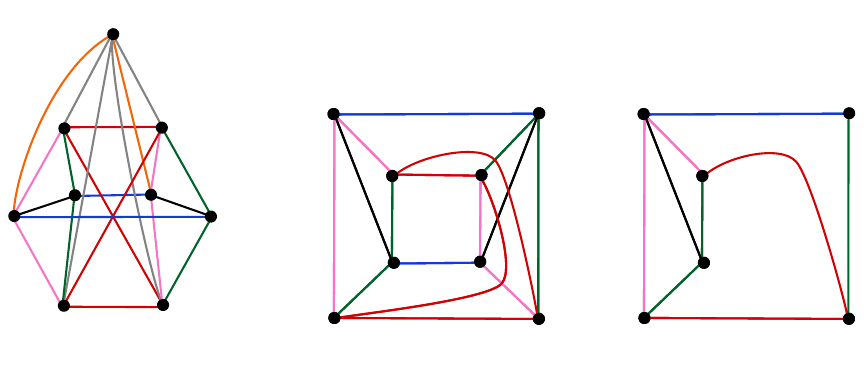} 
\put(12,40){$u$}
\put(4.5,27){$a$}
\put(20,27){$b$}
\put(18.5,20.5){$c$}
\put(5.8,20.5){$d$}
\put(-2,15){$w$}
\put(25.5,15){$x$}
\put(4.5,5){$z$}
\put(20,5){$y$}

\put(46,19){$a$}
\put(53,19){$b$}
\put(53,12.3){$c$}
\put(46,12.3){$d$}
\put(36,30){$w$}
\put(63,30){$x$}
\put(36,4){$z$}
\put(63,4){$y$}

\put(87,14){$F_1$}
\put(92,25){$F_2$}

\put(10,0){$(a)$}
\put(49,0){$(b)$}
\put(85,0){$(c)$}
\end{overpic}
\end{center}
\caption{Proving that all proper minors of $(9,21)_2$ are 4-flat.} \label{FigCube}
\end{figure}

If we delete an edge incident with $u$, say $e=uw$ (respectively, $e=uz$), then we remove vertices $c,b$, and embed the remaining graph as in \cref{FigCube}~(c), putting $u$ inside face $F_1$ (resp.\ $F_2$).

If we contract an edge $f$, then we remove the contracted vertex and proceed as follows:  
\begin{itemize}
    \item if $f=ab$ (red), we can remove $u$ as \cref{FigCube}~(b) becomes planar; 
    \item if $f=xy$ (green), we remove $u$ and obtain a subgraph of the above case $e=xc$;
    \item if $f=cb$ (pink), or $f=xc$ (black), we remove $u$ and obtain a subgraph of the above case $e=xb$;
    \item if $f=cd$ (blue), we remove $x$, and notice that the remaining subgraph is induced by $u$ and a tree, and is therefore planar;
    \item if $f=uy$ (grey), or $f=uc$ (orange), we remove $b$;
\end{itemize}


\tempnewpage

\section{Further open questions}
\label{sec:conlcusions}
\label{sec:outlook}
\label{sec:open_questions}


\iftrue 


The study of 4-flatness comes with a number of difficulties and subtleties inherited from the embedding problem $2\to 4$.
The standard homological obstruction, the van Kampen obstruction \cite{van1933komplexe}, is not strong enough to reliably decide embeddability of 2-complexes in $\RR^4$, yet already the simplest example for this failure (the Freedman-Krushkal-Teichner complex \FKT, see \cref{sec:YD_counterexample}) is rather complex.
Yet, for all known graphs $G$ that are not 4-flat, their full complex $X(G)$ has non-zero van Kampen obstruction.

\begin{question}\label{q:van_Kampen}
    Is it the case that a graph $G$ is 4-flat if and only if its full complex $X(G)$ has zero van Kampen obstruction?
\end{question}

Note that an analogous phenomenon actually happens for linkless graphs: even though there are non-trivial links with vanishing linking number, an embedding of an intrinsically linked graph will always have a link with non-zero linking number.

Moreover, if the answer to \cref{q:van_Kampen} is affirmative, this would confirm \cref{conj:DY_YD} since both \DY- and \YDtrafos\ preserve the vanishing of the van Kampen obstruction.

\medskip
During our research we identified a number of 2-complexes whose~embeddability seems likewise out of reach of standard techniques, in particular, cannot be decided by the van Kampen obstruction.

Consider the complex $\mathcal J_{3,n}$ with 1-skeleton $K_3*K_n$ (\ie\ $K_3\cupdot K_{3,n}\cupdot K_n$) in which we attach a 2-cell  along $abcd$ for every four-tuple of  vertices $a,b,c,d$ with $a,b\in K_3$ and $c,d\in K_n$.
For $n\le 4$ the complex can be obtained from $\mathcal K_7-\Delta$ by cloning and joining 2-cells, hence can be embedded following \cref{ex:K_7-Delta}, \cref{res:joining_embeds} and \cref{res:cloning_embeds}.
This does not apply for $n\ge 5$.

\begin{question}
    \label{q:J3n}
    For which $n\ge 5$ does $\mathcal J_{3,n}$ embed in $\RR^4$?
\end{question}

Note that any two 2-cells of $\mathcal J_{3,n}$ intersect, hence the van Kampen obstruction is zero and gives no information about embeddability. 
The more interesting outcome would be that $\mathcal J_{3,n}$ does \emph{not} embed. 
Some consequences are:
\begin{myenumerate}
    \item this complex would replace the \FKT\ complex as the simplest known example of a non-embeddable complex with vanishing van Kampen obstruction.
    \item the \DYtrafo\ at $K_3\subset\mathcal J_{3,n}$ results in a complex in which every 2-cell passes through a fixed vertex. The \DYtrafo\ is therefore embeddable by \cref{res:2_dominating_vertices}. Thus $\mathcal J_{3,n}$ could replace and greatly simplify the construction of \cref{sec:YD_counterexample}.
\end{myenumerate}

If $\mathcal J_{3,n}$ turns out to be embeddable for all $n\ge 5$, a number of modifications are possible that are still of interest:
\begin{myenumerate}
    \item adding a 2-cell along $K_3\subset\mathcal J_{3,n}$. If this makes a difference (\ie\ $\mathcal J_{3,n}$~is~embeddable, but $\mathcal J_{3,n}+\Delta$ is not), then $\mathcal J_{3,n}$ would be a counterexample to \cref{conj:stellifying}.
    \item adding a cone over $K_5\subset\mathcal J_{3,5}$. Then, performing a \DYtrafo\ at $K_3\subset\mathcal J_{3,n}$ results in a complex in which every 2-cell passes through one of \emph{two} vertices. This still guarantees embeddability by \cref{res:2_dominating_vertices}.
\end{myenumerate}

For both modifications the van Kampen obstruction remains zero.


\medskip
Yet another family of relatively simple complexes with unknown embeddability are constructed as follows: let $\mathcal H_{2n-1}$ be the complex with 1-skeleton $K_{2n-1}$ and a 2-cell attached along each cycle of length $n$ (alternatively, each cycle of length $\ge n$).
Like before, any two cycles intersect and the van Kampen obstruction vanishes.
Also, once again, $\mathcal H_{2n-1}$ is embeddable for $n\le 4$ as it can be constructed from $\mathcal K_7-\Delta$ using our  embeddability preserving operations.
The following question remains:

\begin{question}
    For which $n\ge 5$ does $\mathcal H_{2n-1}$ embed in $\RR^4$?
\end{question}

\comment{
    Crossing numbers of graphs form a mainstream topic in graph theory \cite{Schaefer}. Adiprasito \cite{AdiCom} considers a higher-dimensional analogue: for a (simplicial) complex $X$, he defines $\operatorname{cr}_d(X)$ as the minimum number of pairwise intersections of $d$-simplices of $X$ in any PL embedding of $X$ into $\RR^{2d}$. Returning to graphs $G$, one can define the \emph{$d$-crossing number $\operatorname{cr}_d(G)$} of $G$ as follows. Generalising $X(G)$, we define $X_d(G)$ -- the full $d$-complex of $G$ -- to be the inclusion-maximal \note{"inclusion-maximal" is a bit vague; ideas for an elegant definition? \msays{I am okay with it for this section.}} $d$-dimensional CW complex with 1-skeleton $G$. We then let  $\operatorname{cr}_d(G):= \operatorname{cr}_d(X_d(G))$. It would be interesting to understand the dependence of $\operatorname{cr}_d(G)$ on the dimension $d$ for various $G$. In particular, 
\begin{problem}
    Determine the asymptotics of $\operatorname{cr}_d(K_n)$ as $d,n\to \infty$.
\end{problem}

    \todo{\msays{I also tend towards removing the crossing number part. How do you feel about it after some time?}}
}


\fi
\tempnewpage 

\par\bigskip
\noindent
\textbf{Acknowledgements.} 
We thank 
\Tam\ for her valuable input and the many discussions about this project.

\bibliographystyle{abbrv}
\bibliography{literature}

\addresseshere

\newpage
\appendix

\section{Computer code}
\label{sec:appenix_code}

We use the Mathematica package \texttt{YTYGraphTransforms.m} by Mike Pierce \cite{YTYGraphTransforms} to enumerate the graphs in the 
 \DY-family with given generators. 

\subsection{Enumerating Heawood graphs}
The list of Heawood graphs can be generated~as follows:
\begin{lstlisting}
Heawood = WyeTriangleWyeFamily[{
    CompleteGraph[7],
    CompleteGraph[{3, 3, 1, 1}]
}];
Length@Heawood (* output: 78 *)
\end{lstlisting}

\subsection{Verifying \cref{res:all_Heawood_are_excluded}}

The following code can be used to verify that each minor of a Heawood graph is 4-flat. 
The code iterates through all Heawood graphs $G$ and all~minors of the form $H\in\{G-e,G/e\}$. It then checks for each pair of vertices $v,w\in V(H)$ whether $H-\{v,w\}$ is planar. 
If one such pair is found, then $H$ is 4-flat by \cref{res:linkless_planar_outerplanar}.

\begin{lstlisting}
counterexampleFound = False;
Do[ (* for all Heawood graphs G *)
    minors = Join[
        DeleteDuplicates[
            Table[EdgeDelete[G,e], {e, EdgeList[G]}],
            IsomorphicGraphQ],
        DeleteDuplicates[
            Table[EdgeContract[G,e], {e, EdgeList[G]}],
            IsomorphicGraphQ]
    ];
    Do[ (* for all minors H of G *)
        vertexPairFound = False;
        Do[ (* for each pair of vertices in H *)
            If[PlanarGraphQ[VertexDelete[H, pair]],
                vertexPairFound = True;
                Break[];
            ];,
            {pair, Subsets[VertexList[H], {2}]}
        ]
        If[!vertexPairFound,
            counterexampleFound = True;
            Print["Counterexample found!"];  
                (* <-- this code is never reached *)
        ];,
        {H, minors}
    ];,
    {G, Heawood}
]
If[!counterexampleFound,
    Print["No counterexample found!"] (* <-- this code is reached *)
];
(* output: No counterexample found! *)
\end{lstlisting}

\subsection{The ``remaining Heawood graphs''}
\label{sec:appenix_remaining_Heawood}

The seven remaining Heawood graphs (in the sense of \cref{sec:Heawood_excluded_minor}, including $K_7$ and $K_{3,3,1,1}$) can be listed as follows:

\begin{lstlisting}
remaining = Select[Heawood, Min@VertexDegree[#] > 3 &];
Length@remaining (* output: 7 *)
\end{lstlisting}

\end{document}